\renewcommand*{\backref}[1]{}
\newtheorem{thm}{Theorem}[section]
\newtheorem*{thm*}{Theorem}
\newtheorem{cor}[thm]{Corollary}
\newtheorem*{cor*}{Corollary}
\newtheorem{lem}[thm]{Lemma}
\newtheorem*{lem*}{Lemma}
\newtheorem{prop}[thm]{Proposition}
\newtheorem*{prop*}{Proposition}
\newtheorem*{conj*}{Conjecture} 
\newtheorem{rmk}[thm]{Remark}
\newtheorem{defi}[thm]{Definition}
\newcommand{\CC}{\mathbb{C}}
\newcommand{\HH}{\mathbb{H}}
\newcommand{\RR}{\mathbb{R}}
\newcommand{\Ss}{\mathbb{S}}
\newcommand{\DD}{\mathbb{B}}
\newcommand{\D}{{D}}
\newcommand{\BB}{\mathbb{B}}
\newcommand{\ZZ}{\mathbb{Z}}
\newcommand{\NN}{\mathbb{N}}
\newcommand{\PP}{\mathbb{P}}
\newcommand{\QQ}{\mathbb{Q}}
\newcommand{\Cn}[1]{(\mathbb{C}^*)^{#1}}
\newcommand{\Aut}{\mathrm{Aut}}
\newcommand{\GL}{\mathrm{GL}}
\newcommand{\e}{\mathrm{e}}
\newcommand{\id}{\operatorname{\text{\sf id}}}
\newcommand{\del}{\partial}
\newcommand{\im}{\mathop\mathrm{im}}
\newcommand{\rank}{\mathop\mathrm{rank}}
\newcommand{\al}{\alpha}
\newcommand{\be}{\beta}
\newcommand{\la}{\lambda}
\newcommand{\re}{\mathop\mathrm{Re}}
\newcommand{\I}{\mathbb{I}}
\newcommand{\CP}{\CC\PP}
\newcommand{\Spec}{\mathop\mathrm{Spec}}
\newcommand{\Ka}{K\"ahler}
\newcommand{\ce}{\mathcal{C}^\infty}
\newcommand{\overbar}[1]{\mkern 1.5mu\overline{\mkern-1.5mu#1\mkern-1.5mu}\mkern 1.5mu}
\def\XXint#1#2#3{{\setbox0=\hbox{$#1{#2#3}{\int}$ }
\vcenter{\hbox{$#2#3$ }}\kern-.6\wd0}}
\title{On a class of Kato manifolds}
\author{Nicolina Istrati}
\address[Nicolina Istrati]{School of Mathematical Sciences, Tel Aviv University, Ramat Aviv, Tel Aviv 69978 Israel}
\email{nicolinai@mail.tau.ac.il}
\author{Alexandra Otiman}
\address[Alexandra-Iulia Otiman]{Roma Tre University, Department of Mathematics and Physics, Largo San
Leonardo Murialdo, Rome, Italy AND
Institute of Mathematics “Simion Stoilow” of the Romanian Academy, 21, Calea Grivitei,
010702, Bucharest, Romania AND
University of Bucharest, Research Center in Geometry, Topology and Algebra, Faculty of Mathematics and Computer Science, 14 Academiei Str., Bucharest, Romania
}
\email{aiotiman@mat.uniroma3.it}
\author{Massimiliano Pontecorvo}
\address[Massimiliano Pontecorvo]{Roma Tre University, Department of Mathematics and Physics, Largo San
Leonardo Murialdo, Rome, Italy}
\email{max@mat.uniroma3.it}
\thanks{A. O. is partially supported by a grant of the Romanian Ministry of Research and Innovation, CNCS - UEFISCDI,
project number PN-III-P4-ID-PCE-2016-0065, within PNCDI III}
\begin{document}

\begin{abstract}
We revisit Brunella's proof of the fact that Kato surfaces admit locally conformally K\" ahler metrics, and we show that it holds for a large class of higher dimensional complex manifolds containing a global spherical shell. On the other hand, we construct manifolds containing a global spherical shell which admit no locally conformally K\"ahler metric. We consider a specific class of these manifolds, which can be seen as a higher dimensional analogue of Inoue-Hirzebruch surfaces, and study several of their analytical properties. In particular, we give new examples, in any complex dimension $n \geq 3$, of compact non-exact locally conformally K\" ahler manifolds with algebraic dimension $n-2$, algebraic reduction bimeromorphic to $\CC\PP^{n-2}$ and admitting non-trivial holomorhic vector fields.
\end{abstract}

\maketitle

\section*{Introduction}

\bigskip
In \cite{k} Masahide Kato introduced a class of compact complex manifolds of non-K\" ahler type, which can be described as containing a {\it global spherical shell}.  They are often referred to in the literature as {\it GSS manifolds} or {\it Kato manifolds}, especially in the intensively studied case of complex dimension 2. We shall follow the exposition in \cite{k} to introduce them.

A spherical shell (SS) in an $n$-dimensional complex manifold is an open subset biholomorphic to a neighbourhood of the sphere $\Ss^{2n-1}$ in $\CC^{n}$. A global spherical shell (GSS) is a spherical shell such that its complement is connected. The simplest examples of manifolds containing a GSS are the primary Hopf manifolds and their blow-ups. More generally, Kato showed in \cite{k} that any compact complex manifold containing a GSS arises in the following way. Let $\pi:\hat\DD\rightarrow \DD$ be a modification of the unit ball in $\CC^n$ at finitely many points and let $\sigma:\overline\DD\rightarrow \hat\DD$ be a holomorphic embedding. Then glue the two boundary components of $\hat\DD-\sigma(\overline\DD)$ via the real analytic CR-diffeomorphism $\sigma \circ \pi$. The manifold $M$ such obtained is a smooth $n$-dimensional compact complex manifold with $\pi_1(M)\cong\ZZ$ and contains a GSS given by a neighbourhood of $\del\hat\DD$. The couple $(\pi,\sigma)$ is called a Kato data for the manifold $M$.

Since Kato manifolds have first Betti number  equal to $1$, they cannot have a K\" ahler metric. Little is known in general about the Hermitian geometry of complex non-K\" ahler manifolds. The special Hermitian non-K\" ahler metrics studied so far usually arise by imposing specific cohomological conditions on the fundamental $(1,1)$-form of the metric. One such class is given by the {\it locally conformally K\" ahler (lcK)} metrics (see Section~\ref{SecLCK}), which are of  particular interest on compact non-K\"ahler manifods. Indeed, this is due to a theorem of Vaisman \cite{vai80}, stating that on a compact K\" ahlerian manifold, any lcK metric is automatically globally conformal to a K\"ahler metric. 

In complex dimension $2$, lcK metrics were first constructed on some particular classes of Kato surfaces. LeBrun \cite{lb} gave a construction for certain \textit{parabolic Inoue} surfaces and Fujiki-Pontecorvo \cite{fp10}, on all Inoue-Hirzebruch surfaces, using a twistor construction.  Shortly after, Brunella in \cite{bru10} and in \cite{bru} showed that all Kato surfaces admit lcK metrics. In the present paper, we extend Brunella's second construction of lcK metrics to a rather large class of Kato manifolds in any complex dimension. In particular, they give new examples of lcK manifolds which do not admit lcK metrics with potential. So far, the only other known such examples in high dimension were the Oeljeklaus-Toma manifolds of type $(s,1)$ (\cite{ot05}, \cite{o16}) and the blow-ups of lcK manifolds (\cite{tri82}, \cite{vuli}, \cite{ovv}).

\begin{thm*}[see Theorem~\ref{LCK}]
Let $M$ be a Kato manifold of dimension $n$ with Kato data $(\pi,\sigma)$, so that $\pi$ is a finite sequence of blow-ups along smooth centers. Then $M$ admits a locally conformally K\"ahler metric. 
\end{thm*}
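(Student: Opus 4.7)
The plan is to follow Brunella's strategy for Kato surfaces: construct an lcK metric on $M$ via an automorphic \Ka{} form on the universal cover $\tilde M$. Let $\gamma$ generate $\pi_1(M)\cong\ZZ$. It suffices to produce a \Ka{} form $\tilde\omega$ on $\tilde M$ and a positive function $u\colon\tilde M\to\RR_{>0}$ with $\gamma^*\tilde\omega=\lambda\tilde\omega$ and $\gamma^*u=\lambda u$ for some $\lambda>0$. Then $\omega:=\tilde\omega/u$ is $\gamma$-invariant, so descends to $M$, and a direct computation gives $d\omega=-(d\log u)\wedge\omega$. Since $d\log u$ is $\gamma$-invariant and closed, it descends to a closed $1$-form $\theta$ on $M$, making $\omega$ an lcK metric with Lee form $-\theta$.

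Unfolding the Kato gluing, $\tilde M$ is covered by a $\gamma$-equivariant chain of fundamental domains, each biholomorphic to $\hat\DD\setminus\sigma(\DD)$ and glued to the next via $\sigma\circ\pi$. The lifts of the exceptional divisors of $\pi$ form a locally finite family $\{E_\alpha\}$ in $\tilde M$ on which $\gamma$ acts by shifting copies (locally finite because the blow-up centers lie in the interior of $\DD$, away from the gluing boundaries). In the Hopf-like model case ($\pi=\id$, $\sigma$ a strict linear contraction) one has $\tilde M\cong\CC^n\setminus\{0\}$ and the potential $F=\|z\|^2$ already provides a $\gamma$-multiplicative \Ka{} form, recovering the classical lcK metric on a linear Hopf manifold. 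In general I would build $F$ on $\tilde M$ by patching $\|z\|^2$-type potentials on each fundamental domain so that consecutive pieces agree up to the scaling factor $\lambda$ along the gluing identification. The form $i\partial\bar\partial F$ is then \Ka{} on $\tilde M$ away from the $E_\alpha$. Near each $E_\alpha$ I add a local \Ka{} correction $\eta_\alpha$ supported in a small tubular neighborhood, supplied by the classical construction of \Ka{} forms on blow-ups along smooth submanifolds; $\gamma$-equivariance is enforced by choosing one local model on a slice of the orbit and pushing it forward under iterates of $\gamma$, rescaled by appropriate powers of $\lambda$. Local finiteness ensures that $\tilde\omega:=i\partial\bar\partial F+\sum_\alpha\eta_\alpha$ is a genuine positive $(1,1)$-form with $\gamma^*\tilde\omega=\lambda\tilde\omega$, and $u=F$ (with a harmless adjustment if needed to guarantee strict positivity near the exceptional locus) closes the argument.

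The key technical obstacle is the construction of $F$: one must arrange the potentials on consecutive fundamental domains so that $(\sigma\circ\pi)^*F$ on $\sigma(\del\DD)$ equals $\lambda\cdot F$ on $\del\hat\DD$, while keeping $F$ strictly plurisubharmonic on each fundamental domain away from the exceptional locus. Brunella's original argument for $n=2$ combines plurisubharmonic regularization (Richberg) with specific properties of the CR-embedding $\sigma$ on $\overline\DD$; the higher-dimensional matching should be analogous since $\pi$ is biholomorphic near $\del\hat\DD$ and $\sigma$ is a genuine holomorphic embedding of $\overline\DD$. The hypothesis that $\pi$ is a sequence of blow-ups along smooth centers enters in producing the local \Ka{} corrections $\eta_\alpha$: classical \Ka{}-blow-up theory provides them for smooth centers but not for singular ones, which is why this restriction on the Kato data appears in the statement.
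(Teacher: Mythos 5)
Your overall strategy is the same as the paper's (and Brunella's): build a K\"ahler form on the universal cover on which the deck group acts by homotheties, and use the smooth-centres hypothesis to guarantee K\"ahlerianity of the modification $\hat\DD$. You also correctly locate where that hypothesis enters. However, the proposal has a genuine gap exactly at the point you yourself flag as ``the key technical obstacle'': the construction of a strictly plurisubharmonic $F$ with $(\sigma\circ\pi)^*F=\lambda F$ across the gluing is asserted to be ``analogous'' to the two-dimensional case, but it is never carried out, and it is precisely where the entire content of the proof lives. Note that the naive version cannot work as stated: for a general Kato germ $F_{\mathrm{germ}}=\pi\circ\sigma$, one has $(\sigma\circ\pi)^*\|z\|^2\neq\lambda\|z\|^2$ (this already fails for a non-homothetic linear contraction, so your Hopf model remark is only valid for special contractions), so no amount of ``patching $\|z\|^2$-type potentials'' will make the potential itself transform multiplicatively. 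What the paper actually does is weaker and subtler: it does not make a global potential multiplicative, but forces the \emph{metric} to coincide with an exact multiple of the model metric $dd^c\bigl(\lambda(1+\|z\|^2)\bigr)$ on two annular collars --- once near $\sigma(\del\DD_r)$ and once near $\pi^{-1}(\del\DD_r)$ --- by interpolating between the true potentials ($\phi=$ potential of $\sigma^*\omega_0$, resp.\ $\psi=$ potential of $\pi_*\omega_1$) and the model via Demailly's regularized maximum. The existence of the interpolation constants $c_1>0$, $c_2$ in the second step is not automatic: it rests on the strict monotonicity of $t\mapsto\max_{\del\DD_t}\psi$ coming from the maximum principle for strictly plurisubharmonic functions. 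None of this appears in your write-up, so the proposal is an outline of the right strategy rather than a proof.

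A secondary, fixable inaccuracy: the forms $\eta_\alpha$ supplied by the blow-up construction are not ``local K\"ahler corrections''; they represent $c_1$ of $\mathcal O(-E_\alpha)$, are positive only along the fibres of $E_\alpha\to Z_\alpha$, and must be scaled by a small $\epsilon$ so that their negative part is dominated by the pullback term. The paper avoids this bookkeeping altogether by starting from a K\"ahler form on the whole of $\hat\DD$ (smooth up to the boundary) and only modifying it near the two gluing collars, which is a cleaner order of operations than first building a degenerate global potential and then correcting it along every exceptional divisor equivariantly.
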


Note that for $n=2$, the hypothesis in the above for $\pi$ is not needed since any modification is a sequence of blow-ups at points. However, in higher dimension, Hironaka's famous examples of modifications \cite{hirn} allow us to construct examples showing that the hypothesis on $\pi$ is necessary:

\begin{prop*}[see Proposition~\ref{NOLCK}]
In any dimension $n\geq 4$ there exist Kato manifolds which admit no lcK metric.
\end{prop*}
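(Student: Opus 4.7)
My plan is to exploit Hironaka-type non-K\"ahler modifications to construct a Kato manifold $M$ whose Kato data $(\pi,\sigma)$ uses a $\pi$ that is not a composition of blow-ups along smooth centers, and then to obstruct any lcK structure on $M$ via a homological argument on the universal cover. Concretely, for each $n\geq 4$, I would first produce a Hironaka-type proper modification $\pi\colon\hat\DD\to\DD$ of the unit ball, with exceptional set contracted to a single interior point $p\in\DD$, such that $\hat\DD$ is smooth and contains two smooth rational curves $C_1,C_2\subset\pi^{-1}(p)$ and a compact real $3$-chain $T\subset\hat\DD$ with $\del T=C_1+C_2$. The construction is the classical Hironaka trick of blowing up two intersecting smooth subvarieties in opposite orders near two of their intersection points, carried out in a small neighbourhood of $p$; the assumption $n\geq 4$ provides the codimension needed to arrange the entire exceptional set to sit over the single point $p$ while keeping $\hat\DD$ smooth. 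Since any composition of smooth blow-ups of $\DD$ produces a K\"ahler manifold in which distinct effective exceptional rational curves are linearly independent in $H_2$, the relation $[C_1]+[C_2]=0$ shows that $\pi$ is not such a composition, so the previous theorem does not apply.

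Next, I would choose the embedding $\sigma\colon\overline\DD\hookrightarrow\hat\DD$ with image disjoint from the compact set $C_1\cup C_2\cup T$; this is possible because $C_1\cup C_2\cup T$ is contained in an arbitrarily small neighbourhood of $\pi^{-1}(p)$, while $\sigma(\overline\DD)$ can be placed near $\del\hat\DD$. Let $M$ be the Kato manifold determined by $(\pi,\sigma)$. By construction, the fundamental domain $\hat\DD\setminus\sigma(\overline\DD)$ embeds into $M$ and carries $C_1$, $C_2$ and $T$.

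For the obstruction, suppose towards a contradiction that $M$ admits an lcK metric $\omega$ with closed Lee form $\theta$. Since $\pi_1(M)\cong\ZZ$, on the universal cover $q\colon\tilde M\to M$ the pulled-back Lee form is exact, $q^*\theta=df$, and hence $\tilde\omega:=e^{-f}q^*\omega$ is a globally defined K\"ahler form on $\tilde M$. Because $C_1,C_2$ are simply connected and the chain $T$ is contained in one lift of the fundamental domain, the $C_i$ lift to smooth rational curves $\tilde C_i\subset\tilde M$ and $T$ lifts to $\tilde T$ with $\del\tilde T=\tilde C_1+\tilde C_2$. Positivity of $\tilde\omega$ on complex curves gives $\int_{\tilde C_i}\tilde\omega>0$ for $i=1,2$, while Stokes' theorem yields $\int_{\tilde C_1}\tilde\omega+\int_{\tilde C_2}\tilde\omega=\int_{\del\tilde T}\tilde\omega=\int_{\tilde T}d\tilde\omega=0$, a contradiction.

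The main obstacle is Step~$1$: one has to realise Hironaka's non-K\"ahler modification as a proper modification of the $n$-ball whose entire exceptional locus is concentrated over a single interior point, and to verify that the $3$-chain $T$ witnessing $[C_1]+[C_2]=0$ can be chosen small enough that $\sigma(\overline\DD)$ can avoid it while still lying in $\hat\DD$. Granting this geometric input, the lcK obstruction is simply the Stokes argument above applied to the K\"ahler lift $\tilde\omega$.
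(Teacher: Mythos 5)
Your overall strategy --- realise a Hironaka-type modification inside a Kato data and then kill any lcK metric by integrating the K\"ahler form of the universal cover over a homologically trivial configuration of compact rational curves --- is exactly the strategy of the paper, and your final Stokes argument is correct. (The paper phrases the same obstruction by observing that a fundamental domain of $\tilde M$ contains a compact copy of Hironaka's non-K\"ahler manifold $H_{n-1}$, which admits no K\"ahler metric precisely because it contains a null-homologous rational curve; integrating the K\"ahler form of $\tilde M$ directly over the curves, as you do, is equivalent and if anything slightly more self-contained, since it avoids invoking the compact submanifold.)

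The genuine gap is in your Step 1, which you yourself flag as ``the main obstacle'' but do not close, and the hint you offer for closing it does not work. A Kato data requires $\pi\colon\hat\DD\to\DD$ to be a proper modification at \emph{finitely many points}: $\pi$ must be a biholomorphism outside the preimage of a finite set. If you perform the Hironaka trick by blowing up two positive-dimensional smooth subvarieties of $\DD$ through $p$ (in either order), the locus where $\pi$ fails to be an isomorphism is the union of those subvarieties, which is positive-dimensional no matter what their codimension is; the claim that ``$n\geq 4$ provides the codimension needed to arrange the entire exceptional set to sit over the single point $p$'' is not true and cannot be made true for centers of positive dimension. The only way to concentrate the exceptional locus over one point is to take the \emph{first} center to be the point itself and every subsequent center to lie inside the fibre over that point. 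This is what the paper does: first blow up $0\in\BB$, obtaining $E_1\cong\CP^{n-1}$, then perform the Hironaka modification of $\hat\BB^{(1)}$ along a rational curve $c\subset E_1$ with one node (blowing up the two branches in opposite orders near the node, and the whole curve elsewhere); since $c\subset\pi_1^{-1}(0)$, the composite is a modification at the single point $0$, hence a legitimate Kato data, and the strict transform of $E_1$ is Hironaka's $H_{n-1}$, non-K\"ahler because $n-1\geq 3$ --- this is where $n\geq 4$ actually enters, not through any codimension count in $\DD$. With this corrected construction your curves and the $3$-chain live in the compact set $\pi^{-1}(0)$, the embedding $\sigma$ can be chosen to miss the relevant strict transform, and the rest of your argument goes through as written.
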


By now, we have a very good understanding of the analytical properties of Kato surfaces, by the works of Nakamura (\cite{n84}, \cite{n90}), Dloussky (\cite{dl}, \cite{d1}, \cite{d2}), Dloussky-Oeljeklaus (\cite{do99}) etc. The higher dimensional case is less understood due to the numerous ways one can perform modifications. An explicit example of a Kato threefold together with the computation of some of its analytical and topological invariants is given by Ruggiero in \cite{r}. The main feature which differentiates Kato surfaces from their higher dimensional analogues is the fact that, while the germ $\pi \circ \sigma: (\CC^n, 0) \rightarrow (\CC^n, 0)$ uniquely determines the surface, this no longer holds for $n>2$. An example which illustrates this fact for $n=3$ appears in \cite{r}. 

In the present paper, we consider the simplest class of Kato manifolds, namely when $\pi$ in the Kato data is given by a sequence of blow-ups at points which are centers of the standard charts of $\mathrm{Bl}_0\DD$ and $\sigma$ is itself given by a standard chart. The Kato data in this case can be encoded by a matrix $A\in\GL(n,\ZZ)$, which we call a \textit{Kato matrix}, and the resulting Kato manifold $M_A$ is completely determined by $A$.  In particular, the analytical properties of the manifolds $M_A$ can be read off the algebraic properties of $A$. The germ $F=\pi\circ\sigma$ corresponding to such a Kato data has only monomial components and has a simple expression in terms of $A$. In addition, in this case the germ uniquely determines the Kato manifold in our specific class. 

This class of manifolds intersects the one constructed by Tsuchihashi in \cite{tsu} as toroidal compactifications of $\ZZ$-quotients of open subsets in complex tori. In complex dimension $2$, the manifolds $M_A$ are Inoue-Hirzebruch surfaces, by \cite{d1}.  In a description coming from number theory, due to Hirzebruch (see \cite{h}), these surfaces appear as compactifications of  quotients of $\mathbb{H} \times \mathbb{C}$ by $\Lambda \rtimes U$, where $\Lambda$ is a finite index lattice in the ring of integers of some quadratic field $K$ and $U$ is a cyclic group of positive units in $K$. This generalizes to any dimension in the following manner (see Theorem~\ref{Thmcompact} for a more precise statement):

\begin{thm*}
The Kato manifold $M_A$ is a compactification of $\CC^{n-1}\times\HH/\Lambda\rtimes U$ with rational hypersurfaces, where $U\cong\ZZ$, $\Lambda\subset\CC^n$ is a lattice of rank $r\in\{2\ldots, n\}$, and the group $\Lambda\rtimes U$ is determined by $A$. 
\end{thm*}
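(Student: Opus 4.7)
The strategy is to use the logarithmic covering to express the open part of $M_A$ (away from a finite union of divisors) as a quotient of a tube domain in $\CC^n$, and then to rewrite this quotient as $(\CC^{n-1}\times\HH)/(\Lambda\rtimes U)$ by means of the Perron--Frobenius eigendecomposition of $A$.

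\textbf{Step 1 (Monomial germ and logarithmic cover).} Since every blow-up in $\pi$ is centred at a torus-fixed point of a standard chart and $\sigma$ is itself a standard chart, $F=\pi\circ\sigma\colon(\CC^n,0)\to(\CC^n,0)$ is a purely monomial map whose exponent matrix is $A\in\GL(n,\ZZ)$; in particular $F$ restricts to a biholomorphism of $(\CC^*)^n$. Taking the universal cover $\exp\colon\CC^n\to(\CC^*)^n$, whose deck lattice is $2\pi i\,\ZZ^n$, conjugates $F$ to the linear automorphism of $\CC^n$ given by $A$ (or its transpose, depending on convention). Denoting by $M_A^{*}$ the complement in $M_A$ of the images of the coordinate hyperplanes $\{z_i=0\}$ and of the exceptional divisors of $\pi$, one obtains
\begin{equation*}
M_A^{*}\;\simeq\;\tilde W/G,\qquad G:=2\pi i\,\ZZ^n\rtimes\langle A\rangle,
\end{equation*}
for an appropriate $G$-invariant open $\tilde W\subset\CC^n$.

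\textbf{Step 2 (Perron--Frobenius and coordinates).} The contracting nature of $F$ near $0$ forces $A$ to be non-negative with a dominant positive eigenvalue $\lambda>1$; let $v\in\RR_{>0}^n$ be an associated left eigenvector and set $\ell(w):=v^\top w$, so that $\ell\circ A=\lambda\,\ell$. Using that all non-Perron eigenvalues of $A$ have modulus $<1$, a short dynamical argument on the basin of attraction of $F$ near $0$ identifies
\begin{equation*}
\tilde W\;=\;\{w\in\CC^n:\re\ell(w)<0\}.
\end{equation*}
Fixing $v'\in\CC^n$ with $\ell(v')=1$ and $H:=\ker\ell$, the map $w\mapsto(w-\ell(w)v',-i\,\ell(w))$ gives a biholomorphism $\tilde W\simeq H\times\HH\simeq\CC^{n-1}\times\HH$. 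In these coordinates $A$ acts as $(\xi,\zeta)\mapsto(S\xi,\lambda\zeta)$ for some $S\in\GL(n-1,\CC)$ carrying the non-Perron spectrum of $A$ and generating $U\simeq\ZZ$, while $2\pi i\,\ZZ^n$ descends to a translation lattice $\Lambda\subset\CC^n$ whose $\HH$-component is real, so that $\HH$ is preserved. The bound $r:=\rank_\ZZ\Lambda\leq n$ is immediate, while $r\geq 2$ follows by noting that $r=1$ would force $U$-invariance to yield $\lambda\in\ZZ$, contradicting the fact that $\lambda$ is a non-rational algebraic integer (the minimal polynomial of the Perron root has degree $\geq 2$ over $\QQ$).

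\textbf{Step 3 (Compactifying divisors and main obstacle).} To conclude, one identifies $M_A\setminus M_A^{*}$ with a finite union of compact rational hypersurfaces: the exceptional divisors of the blow-ups in $\pi$ are (iterated blow-ups of) copies of $\PP^{n-1}$, and the proper transforms of the coordinate hyperplanes $\{z_i=0\}$ descend, under the Kato gluing, to further rational hypersurfaces pasted along toric strata. The hard part is this last combinatorial step, where one must track how coordinate hyperplanes and exceptional divisors meet and are identified under iterated $F$; a parallel subtlety is the precise analysis of the degenerate cases $r<n$, which arise from $\QQ$-linear relations among the entries of the Perron eigenvector $v$.
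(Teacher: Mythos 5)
Your overall strategy (pass to the logarithmic cover of the torus part, conjugate the monomial germ $F_A$ to the linear action of $A$, and use Perron--Frobenius to identify the stable region with a half-space $\{\re\ell<0\}\cong\CC^{n-1}\times\HH$) is exactly the paper's Step for the positive case, and your computation of $\tilde W$ agrees with the paper's $\Omega=i\HH f\oplus V_0$. However, there is a genuine gap at the very first step: you remove \emph{all} coordinate hyperplanes $\{z_i=0\}$, whereas the correct open set is $M_A-C$, where $C$ is the cycle obtained by gluing only the exceptional divisors of $\pi$; by Lemma~\ref{Hj} this corresponds to removing only $\{z_{l+1}\cdots z_n=0\}$, where $l$ is the type of $A$. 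The hyperplanes $\{z_j=0\}$ with $j\leq l$ are \emph{not} part of $C$: they glue to proper Kato submanifolds $M_{A_{(j)}}\subset M_A$ (Remarks~\ref{subvarKato} and \ref{linEquiv}), which have $b_1=1$ and are therefore not rational hypersurfaces. With your choice of $M_A^*$ the compactifying divisor fails to consist of rational hypersurfaces, and the deck lattice is always $2\pi i\,\ZZ^n$ of rank $n$, so you can never obtain the ranks $r=n-l<n$ asserted in the statement. Relatedly, your diagnosis that $r<n$ comes from ``$\QQ$-linear relations among the entries of the Perron eigenvector'' is off: in the paper $r<n$ occurs precisely when $l>0$, i.e.\ when $A=\begin{pmatrix}I_l & G\\ 0 & B\end{pmatrix}$, and the reduction is $W^s(F_A)^*=\CC^l\times W^s(F_B)^*$ (Lemma~\ref{stablerelation}), with Perron--Frobenius applied to the smaller block $B$ after passing to a positive power.

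Two further points. First, for $l>0$ the matrix $A$ is not primitive ($1$ is an eigenvalue of multiplicity $\geq l$ and the left Perron eigenvector vanishes on the first $l$ coordinates), so your appeal to a strictly positive left eigenvector $v\in\RR_{>0}^n$ fails in exactly the cases where $r<n$; and your claim that all non-Perron eigenvalues have modulus $<1$ is neither needed nor true in general (only $|\beta|<\alpha$ holds, which is what the paper's Lemma~\ref{limL} uses, and which already suffices for $\tilde W=\{\re\ell<0\}$). Second, the identification $M_A-C\cong W^s(F_A)^*/U_A$, which you take for granted as ``$M_A^*\simeq\tilde W/G$ for an appropriate $\tilde W$,'' is itself a substantial step (Proposition~\ref{compactification}): one must show that $x\mapsto F_A^m(\pi(x))$ for $x\in W_m$ is well defined, injective (using that $\pi(W)=\DD-F_A(\DD)$), and surjective onto the stable set, the latter requiring the dynamical Lemmas~\ref{descrW} and \ref{outD} about orbits of $F_A$ entering and leaving the fundamental annulus. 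As written, your proposal establishes the linear-algebra half of the argument but not the uniformization of $M_A-C$ nor the correct identification of the boundary divisor, and it proves a statement incompatible with the asserted rank range and rationality of the compactifying hypersurfaces.
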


The Kodaira dimension of the manifolds we consider is always $-\infty$ (Proposition~\ref{Kodairadimension}) and they do not admit holomorphic one-forms (Proposition~\ref{forme}). What is remarkable about this class of Kato manifolds is that it gives examples, in any complex dimension $n\geq 3$, of lcK manifolds without potential (Proposition~\ref{nopotential}) which admit non-trivial holomorphic vector fields (Proposition~\ref{vf}) and have positive algebraic dimension (Proposition~\ref{algdim}). 

Call $r$ in the above thorem the rank of the manifold $M_A$. When $r=2$, we find the following algebraic description of $M_A$ (see Proposition~\ref{algdim} and Theorem~\ref{ThmAlgRed}):

\begin{thm*}
Let $M_A$ be $n$-dimensional, of rank $2$. Then the algebraic dimension of $M_A$ is $n-2$. Its algebraic reduction is bimeromorphic to $\CC\PP^{n-2}$, with a generic fiber being bimeromorphic to a Kato surface $M_B$.
\end{thm*}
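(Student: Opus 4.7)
The plan is to prove the two claims (algebraic dimension and structure of the algebraic reduction) in tandem, exploiting the uniformization $M_A \supset X := \CC^{n-1}\times\HH/\Lambda\rtimes U$ from the preceding theorem together with the fact, recalled in earlier sections, that the complement $M_A \setminus X$ is a union of rational hypersurfaces.

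First, I would produce $n-2$ algebraically independent meromorphic functions on $M_A$. When $\Lambda$ has real rank $2$, after changing coordinates adapted to the eigenstructure of $A$ one may assume that $\Lambda$ and the $\HH$-factor are concentrated in a $2$-dimensional complex slice; the remaining $n-2$ coordinates on $\CC^{n-1}$ are acted on by the generator of $U$ only through multiplicative characters, and $\Lambda$ acts trivially on them. Suitable multiplicative combinations (ratios of monomials) of these coordinates then give $\Lambda\rtimes U$-invariant holomorphic functions on $X$, which I would show extend meromorphically across the rational compactifying divisors by a direct local computation using the explicit Kato blow-up structure. Their monomial form immediately yields algebraic independence, so $a(M_A) \geq n-2$.

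Next, I would identify the resulting meromorphic map $\varphi: M_A \dashrightarrow \CC\PP^{n-2}$ as the algebraic reduction. Since $\varphi$ is defined by monomials in $n-2$ independent coordinates, its image is birational to $\CC\PP^{n-2}$. A generic fiber of $\varphi$ meets $X$ in a subset of the form $\CC\times\HH/\Lambda\rtimes U$ (obtained by fixing the transverse coordinates to generic constants), and its closure in $M_A$ acquires the slices of the compactifying rational hypersurfaces. I would then argue that this closure is itself produced by the same Kato recipe applied to a $2\times 2$ matrix $B\in\GL(2,\ZZ)$ extracted from $A$ as the rank-$2$ invariant block, so that the generic fiber is bimeromorphic to the Kato surface $M_B$. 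Because $M_B$ is an Inoue-Hirzebruch surface we have $a(M_B)=0$, and the standard upper bound $a(M_A) \leq a(\CC\PP^{n-2}) + a(M_B) = n-2$ for a meromorphic fibration matches the lower bound and identifies $\varphi$ as the algebraic reduction.

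The main obstacle lies in the second step: identifying the generic fiber with a \emph{Kato} surface $M_B$. One has to track carefully how the global blow-up and gluing recipe for $M_A$ restricts to a $2$-dimensional transverse slice, and verify that the extracted block $B$ is itself a bona fide Kato matrix whose associated surface is of Inoue-Hirzebruch type, rather than, say, a Hopf surface -- this is exactly what the hypothesis $r=2$ ensures, and is where the bulk of the work of Theorem~\ref{ThmAlgRed} will go. A secondary technical point is the meromorphic extension of the monomial functions across the compactifying hypersurfaces, which relies in an essential way on their rationality.
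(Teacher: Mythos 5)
Your outline follows the same core strategy as the paper: produce $n-2$ algebraically independent $F_A$-invariant monomial meromorphic functions (in the paper these are the $\hat f_j$ induced by $f_j(z,w)=z_j/(w_{n-1}w_n)$, coming from integer vectors $I$ with $IA=I$), assemble them into $\Phi:M_A\dashrightarrow\CC\PP^{n-2}$, and use $a(M_B)=0$ for the $2\times2$ block to cap the algebraic dimension. Three points of divergence are worth flagging. First, for the upper bound the paper does not use a fibration inequality: it applies Ueno's estimate $a(N)\le a(W)+\mathrm{codim}(W)$ to the honest codimension-$(n-2)$ Kato submanifold $M_B\subset M_A$ obtained from $\{z_1=\dots=z_{n-2}=0\}$ via Remark~\ref{subvarKato}. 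The inequality you invoke, $a(M)\le a(\text{base})+a(\text{fiber})$, is not the standard statement (the correct one is $a(M)\le\dim(\text{base})+a(\text{fiber})$, i.e.\ the codimension bound applied to a generic fiber); it gives the right number here only because $a(\CC\PP^{n-2})=\dim\CC\PP^{n-2}$, and it also requires knowing that the fiber closure is a genuine compact submanifold, which the paper's choice of $W=M_B$ sidesteps. Second, the identification of the generic fiber --- which you single out as the main obstacle --- is essentially free in the paper: Theorem~\ref{Thmcompact} in the case $l=n-2$ already provides a biholomorphism $M_A-C_A\cong\CC^{n-2}\times(M_B-C_B)$ under which $\Phi$ is the first projection, so no re-running of the Kato recipe on a transverse slice is needed (and the fact that $B$ is a bona fide $2\times2$ Kato matrix is immediate from $A$ being of type $n-2$). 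Third, a small but real gap: matching upper and lower bounds for $a(M_A)$ does not by itself identify $\Phi$ as the algebraic reduction, since $\CC(M_A)$ could a priori be a nontrivial finite extension of $\Phi^*\CC(\CC\PP^{n-2})$. The paper closes this by comparing $\Phi$ with the true reduction $R:\hat M_A\to V$: the induced finite dominant map $V\dashrightarrow\CC\PP^{n-2}$ must have connected generic fibers because those of $R$ and of $\Phi$ both do, hence it is bimeromorphic. You have the connectedness of the fibers in hand from the product description, but this deduction needs to be made explicit.
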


Concerning the holomorphic vector fields, we find (cf. Corollary~\ref{torus} and Theorem~\ref{ThmHvn-2} ):

\begin{thm*}
Let $M_A$ be $n$-dimensional of rank $r$. Then the compact torus $\mathbb{T}^{n-r}$ acts effectively by biholomorphisms on $M_A$. Moreover, if $r=2$, then $\dim H^0(M_A,TM_A)=(n-1)(n-2)$. 
\end{thm*}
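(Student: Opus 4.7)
My plan is to use the explicit quotient model $(\CC^{n-1}\times\HH)/(\Lambda\rtimes U)$ of the previous theorem to exhibit the torus action directly, and then to combine it with the algebraic reduction to $\CP^{n-2}$ to count holomorphic vector fields in the rank-two case. For the torus action, I would consider translations of $\CC^{n-1}\times\HH$ (viewed as a subset of $\CC^n$) commuting with the generator of $U$, which is itself a biholomorphism whose lift is prescribed by the monomial germ $F=\pi\circ\sigma$ read off from $A$. The centralizer of $U$ inside such translations is a closed connected abelian subgroup containing $\Lambda$ by construction, and a rank count using that $\Lambda$ has rank $r$ should show that the maximal compact subgroup of the quotient by $\Lambda$ is a real torus of dimension $n-r$. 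This gives the action on the open stratum; to extend it across the compactifying rational hypersurfaces I would work in the toroidal charts coming from the construction of $\pi$, where the torus acts by coordinate scalings in the auxiliary variables and so extends automatically. Effectiveness follows from the fact that $\Lambda$ meets the identity component of this subgroup only at the origin.

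For $r=2$, I would then use the algebraic reduction $p:M_A\dashrightarrow\CP^{n-2}$. After resolving indeterminacies and restricting to a Zariski open set where $p$ is smooth, the relative tangent short exact sequence
\[
0 \to T_{M_A/\CP^{n-2}} \to TM_A \to p^*T\CP^{n-2} \to 0
\]
splits the computation. Since the generic fiber is bimeromorphic to an Inoue--Hirzebruch surface, whose connected automorphism group is trivial, the relative piece $H^0(M_A, T_{M_A/\CP^{n-2}})$ vanishes, and hence $H^0(M_A,TM_A)$ injects into $H^0(\CP^{n-2},T\CP^{n-2})\cong\mathfrak{sl}(n-1)$. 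Any lifted vector field must preserve the image of the boundary divisors of $M_A$; I expect this image to be a single distinguished hyperplane dictated by the combinatorics of $A$, and the subalgebra of $\mathfrak{sl}(n-1)$ preserving a hyperplane in $\CP^{n-2}$ is the affine Lie algebra $\mathfrak{aff}(\CC^{n-2})$ of dimension $(n-2)+(n-2)^2 = (n-1)(n-2)$. For the matching lower bound I would exhibit $(n-1)(n-2)$ explicit global holomorphic vector fields on $M_A$, namely the $n-2$ infinitesimal generators of the torus action from the first part together with $(n-2)^2$ linear fields of the form $z_i\partial/\partial z_j$ descending from $\CC^{n-1}$ and extending across the boundary.

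The main obstacle is controlling the boundary behavior. Showing that exactly those vector fields stabilizing the image hyperplane lift to $M_A$ requires a careful analysis of how a pushed-forward vector field must interact with the configuration of rational hypersurfaces, which in turn is encoded by $A$. Verifying that the explicit candidate vector fields extend holomorphically across the exceptional divisors introduced by the blow-ups in $\pi$ amounts to a local computation in the toroidal charts, checking that the relevant vanishing orders are non-negative.
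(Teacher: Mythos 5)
Your construction of the torus action does not work as stated. In the uniformization $M_A-C\cong(\HH\times\CC^{n-1})/(U_A\ltimes\Lambda)$ the lattice $\Lambda$ sits entirely in the $r=n-l$ directions that are logarithms of the coordinates $w_{l+1},\ldots,w_n$, where the generator $F_A$ acts by the linear map $B$, while on the remaining $l$ genuinely affine coordinates $z_1,\ldots,z_l$ it acts by $z_j\mapsto z_jw^L$. A translation therefore commutes with $F_A$ only if its $z$-component is zero and its $w$-component lies in $\ker(B-\id)$; in particular the centralizer of $U$ inside the translation group does \emph{not} contain $\Lambda$ (the product is semidirect precisely because $B$ moves $\Lambda$), and for $r=2$, where $B\in\GL(2,\ZZ)$ has no eigenvalue $1$, this centralizer is trivial, so no torus of dimension $n-r=l$ arises this way. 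The action actually comes from the \emph{rotations} $z_j\mapsto\e^{i\theta_j}z_j$, $1\leq j\leq l$: these commute with $F_A$ because $F_A$ is multiplicatively linear in each $z_j$, their generators $\re(iz_j\del/\del z_j)$ vanish at $0$ and hence lift through $\pi$ to $W$, and periodicity and effectiveness are checked on the dense open set $W-C$ (this is the paper's proof of Corollary~\ref{torus}).

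This propagates into your count for $r=2$: the holomorphic generators of the correct torus action are $z_j\del/\del z_j$, which already lie in the span of your $(n-2)^2$ fields $z_i\del/\del z_j$, so your explicit list contains only $(n-2)^2$ independent fields and misses the $n-2$ fields $w_{n-1}w_n\,\del/\del z_j$; these are $F_A$-invariant precisely because $J_0B-J_0=L$ for $J_0=(1,1)$ (Lemma~\ref{vectprJ}) and they induce the missing translation part $\del/\del T_j$ of $\mathfrak{aff}(\CC^{n-2})$ on the base of the algebraic reduction. Your upper bound via the stabilizer of a hyperplane in $\mathfrak{sl}(n-1,\CC)$ is a genuinely different route from the paper's (Theorem~\ref{ThmHvn-2} expands an $F_A$-invariant vector field on $\DD$ in power series and kills all but finitely many coefficients by a Perron--Frobenius argument), and it is appealing because it explains the answer structurally; but as written it has two unaddressed gaps. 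First, a global vector field on $M_A$ need not lift to the resolution $M_A^*$ on which the reduction becomes a morphism (it must be tangent to the blown-up centers), so the relative tangent sequence cannot be invoked there without justification. Second, the vanishing of the vertical part cannot be tested on the fibers of $\Phi|_{M_A-C_A}$, which are the non-compact surfaces $M_B-C_B$, and passing to the compact fibers requires precisely the boundary analysis you defer; note also that vanishing of $H^0(TX)$ is not a bimeromorphic invariant in general, so one must use that $M_B$ is minimal to conclude that every surface bimeromorphic to it has no vector fields.
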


The paper is organized as follows. In Section~\ref{SecKato} we recall the construction of a general Kato manifold and discuss some of its features. In Section~\ref{SecLCK}, after some preliminaries on locally conformally K\" ahler geometry, we present Brunella's proof in any complex dimension, as well as an example of a Kato manifold without lcK metric. We briefly describe in Section~\ref{SecCoh} the computations for the Betti numbers and the twisted Betti numbers of some Kato manifolds. In Section~\ref{SecIH}, we define a specific class of matrices in $\GL(n,\ZZ)$, the Kato matrices. We study some of their algebraic properties which we need later.  Then, we describe the construction through which we uniquely associate to any Kato matrix $A$ a Kato manifold $M_A$.  In Section~\ref{secComp} we prove a compactification result for these manifolds. The next two sections are dedicated to understanding the analytical properties of the manifolds $M_A$, such as the Kodaira dimension, the algebraic dimension and the space of holomorphic vector fields and one-forms. Finally, in Section~\ref{secn-2} we focus on the class of rank 2 manifolds, for which we find the description of the algebraic reduction as well as the space of holomorphic vector fields. 

\subsection*{Notation} 
Consider the following two norms on $\CC^n$. For $z=(z_1,\ldots,z_n)\in\CC^n$, we put $||z||^2:=\sum_{j=1}^n|z_j|^2$ and $||z||^2_{1,2}:=\sum_{j=1}^{n-1}|z_j|^2+2|z_n|^2$. For $c>0$, we denote by $\DD_c:=\{z\in\CC^n, ||z||<c\}$ and by $\DD_{1,2,c}:=\{z\in\CC^n, ||z||_{1,2}<c\}$ the ball of radius $c$ for the first norm and the second one respectively. We will also write $\DD:=\DD_1$ and $\DD_{1,2}:=\DD_{1,2,1}$. Moreover, we denote by $S_c:=\{z\in\CC^n, 1-c<||z||<1+c\}$. For a subset $S$ of a topological space, we denote by $\mathring S$ its interior and by $\overline{S}$ its closure. For a map $F:\CC^n\rightarrow \CC^n$ and $m>0$, we will denote by $F^m:=F\circ\ldots \circ F$, where the composition is taken $m$ times.

\section{Kato manifolds}\label{SecKato}

In this section, we recall the construction of a general Kato manifold in more detail, following \cite{k} and \cite{dl}. 

\begin{defi} A {\it spherical shell} in a compact complex manifold $M$ of dimension $n$ is an open subset $V \subset M$ such that $V$ is biholomorphic to $S_\epsilon$ for some $\epsilon >0$. We say that $V$ is a {\it global spherical shell (GSS)} if $M \setminus V$ is connected. A manifold which contains a GSS is called a {\it Kato manifold}. 
\end{defi}

\begin{defi} Let $M$ be a complex manifold and let $p_1, \ldots, p_m$ be points in $M$. A {\it proper modification} of $M$ at $p_1, \ldots, p_m$ is a proper surjective holomorphic map $\pi: \hat{M} \rightarrow M$ such that $E:=\pi^{-1}(p_1) \cup \ldots \cup \pi^{-1}(p_m)\subset \hat M$ is a nowhere dense analytic subset and $\pi$ is a biholomorphism precisely outside $E$. $E$ is called the \textit{exceptional set} of the modification. 
\end{defi}

As we have seen in the introduction, any Kato manifold is constructed starting from a modification $\pi:\hat\BB\rightarrow\BB\subset\CC^n$ at finitely many points and an embedding $\sigma:\overline\BB\rightarrow \hat\BB$. We shall call $(\pi, \sigma)$ a {\it Kato data} and $F:=\pi\circ\sigma:\BB\rightarrow\BB$ the {\it corresponding germ}. Let: 
\begin{gather*}
W:=\hat\BB-\sigma(\BB),\ \ \  \del_+W:=\del\hat\BB,\ \ \ \del_-W:=\del\sigma(\DD)\\
\gamma:\del_+ W\rightarrow \del_-W, \ \ \gamma:=\pi\circ\sigma|_{\del_+W}.
\end{gather*}
Clearly $\gamma$ extends to a biholomorphism between small neighbourhoods of the two components of $\del W$, and we obtain the Kato manifold:
\begin{equation*}
M(\pi,\sigma):=\overline W/_\sim
\end{equation*}
where $ x\sim y$  if $ x\in\del_+W$ and $y=\gamma(x)\in\del_-W$.

In order to describe the universal cover of $M=M(\pi,\sigma)$, for each $m\in\ZZ$, let $W_m$ be a copy of $W$. Then the universal cover can be seen as:
\begin{equation}\label{univc}
\tilde {M}= \left(\bigsqcup_{m \in \mathbb{Z}} \overline W_m \right) \big/_\sim =\bigsqcup_{m\in\ZZ} W_m
\end{equation}
where $\sim$ is given by identifying $x\in\del_+W_m$ with $\sigma\circ\pi(x)\in \del_-W_{m+1}$. In particular, we have a natural isomorphism $\pi_1(M)\cong \ZZ$ with respect to which $1$ acts on $\tilde M$ by sending an element in $W_m$ to its copy in $W_{m+1}$, so that $W\subset \tilde M$ is a fundamental domain for this action. Denote by $q:\tilde M\rightarrow M$ the covering map.

Let $(\pi,\sigma)$ be a Kato data, let $P\subset \BB$ denote the finite set of points modified by $\pi$ and let $E\subset\hat\BB$ denote the exceptional set of $\pi$. If $\sigma(P)$ does not intersect $E$, then the Kato manifold $M(\pi,\sigma)$ is a modification of a primary Hopf manifold \cite[Proposition~1]{k}. Thus, in all that follows, we will always assume that $\sigma(P)\cap E\neq \emptyset$ for a Kato data.

A Kato data $(\pi,\sigma)$ is called \textit{centered} if $\pi\circ\sigma(0)=0$. By the following result, whose proof is independent of the dimension, one can always suppose that a Kato data is centered:

\begin{lem}(\cite[Lemme~1.5, Part~I]{dl})
Given a Kato data $(\pi,\sigma)$, there exists a centered Kato data $(\pi',\sigma')$ and a biholomorphism $M(\pi,\sigma)\cong M(\pi',\sigma')$.
\end{lem}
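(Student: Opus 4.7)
The plan is to locate a fixed point of the germ $F = \pi\circ\sigma$ inside $\BB$, and then re-center the data by conjugating with an automorphism of $\BB$ that moves this fixed point to the origin; the resulting Kato data will still describe the same quotient manifold.

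First, observe that $F = \pi\circ\sigma : \overline\BB \to \BB$ is a holomorphic map whose image is a compact subset of $\BB$: indeed $\sigma(\overline\BB)$ is compact in $\hat\BB$, so $F(\overline\BB) = \pi(\sigma(\overline\BB))$ is compact in $\BB$. In particular $F(\BB)$ is relatively compact in $\BB$, and the Earle--Hamilton fixed point theorem (or, equivalently, a Kobayashi contraction argument applied to the iterates $F^m(0)$) yields a unique fixed point $p_0 \in \BB$ of $F$.

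Next, since $\Aut(\BB)$ acts transitively on $\BB$, choose a M\"obius automorphism $\phi \in \Aut(\BB)$ with $\phi(p_0) = 0$, and set
\begin{gather*}
\pi' := \phi\circ\pi : \hat\BB \to \BB, \qquad \sigma' := \sigma\circ\phi^{-1} : \overline\BB \to \hat\BB.
\end{gather*}
Both maps retain their defining properties: $\pi'$ is a proper modification of $\BB$ with the same exceptional set as $\pi$ (now lying over the points $\phi(P)$), and $\sigma'$ remains a holomorphic embedding (using that $\phi$ and $\phi^{-1}$ extend smoothly to $\overline\BB$). The new germ is
\begin{equation*}
F' = \pi'\circ\sigma' = \phi\circ F\circ\phi^{-1},
\end{equation*}
so $F'(0) = \phi(F(p_0)) = \phi(p_0) = 0$, i.e.\ $(\pi', \sigma')$ is centered. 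The condition $\sigma'(\phi(P)) \cap E = \sigma(P) \cap E \neq \emptyset$ is inherited from the original data.

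Finally, to see $M(\pi',\sigma') \cong M(\pi,\sigma)$, note that since $\phi^{-1}(\BB) = \BB$, we have $\sigma'(\BB) = \sigma(\BB)$; hence the fundamental domain $W' = \hat\BB - \sigma'(\BB)$ coincides with $W$, and the gluing map is unchanged since $\sigma'\circ\pi' = \sigma\circ\pi$. The two constructions therefore produce literally the same quotient manifold. The essentially unique obstacle in the argument is the existence of the fixed point $p_0$ in the first step; once that is in hand, everything else is a direct verification.
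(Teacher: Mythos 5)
Your argument is correct: the paper gives no proof of this lemma (it defers entirely to Dloussky's Lemme~1.5, noting only that the argument is dimension-independent), and your fixed-point-plus-conjugation route is exactly the standard one --- $F(\overline\BB)=\pi(\sigma(\overline\BB))$ is compact in $\BB$, so Earle--Hamilton gives the fixed point $p_0$, and conjugating the data by $\phi\in\Aut(\BB)$ with $\phi(p_0)=0$ leaves $W$ and the gluing map $\sigma\circ\pi$ literally unchanged. No gaps to report.
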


We will call a Kato data $(\pi,\sigma)$ \textit{simple} if it is centered and $\pi$ is a biholomorphism outside $0$. The following result, due to Dloussky in the case of surfaces, readily adapts to higher dimension. It shows that it is not a big loss of generality to consider only simple Kato data.

\begin{lem}(\cite[Lemme~2.7, Part~I]{dl})
Let $(\pi,\sigma)$ be a centered Kato data. Then there exists a simple Kato data $(\pi',\sigma')$ and a proper modification at finitely many points $\mu:M(\pi, \sigma)\rightarrow M(\pi',\sigma')$.
\end{lem}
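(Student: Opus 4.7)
The plan is to produce $(\pi',\sigma')$ by ``absorbing'' into the embedding all the modifications of $\pi$ lying above points of $P\setminus\{0\}$, and to realize $\mu$ as the holomorphic contraction of the corresponding exceptional divisors in $M(\pi,\sigma)$. Writing $P=\{0\}\sqcup Q$ with $Q=\{p_2,\ldots,p_k\}$, the first step is to factor $\pi=\pi_0\circ\pi_Q$, where $\pi_0\colon\hat\BB_0\to\BB$ is the modification of $\BB$ obtained by retaining only the blow-ups above $0$ (so $\pi_0$ is a modification at $0$ alone and a candidate for $\pi'$), while $\pi_Q\colon\hat\BB\to\hat\BB_0$ is the remaining modification at the lifts of $Q$ to $\hat\BB_0$. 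Such a factorization exists because the modifications above disjoint points of $P$ are independent of one another.

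The core step is the \emph{clean} case $\sigma(\overline\BB)\cap\pi_Q^{-1}(Q)=\emptyset$, or equivalently $F(\overline\BB)\cap Q=\emptyset$ with $F=\pi\circ\sigma$. Here $\pi_Q$ restricts to a biholomorphism on a neighborhood of $\sigma(\overline\BB)$, so $\sigma':=\pi_Q\circ\sigma\colon\overline\BB\to\hat\BB_0$ is a holomorphic embedding, and $(\pi_0,\sigma')$ is a centered simple Kato data. Moreover $\pi_Q|_W\colon W\to W'$ is a proper modification of the fundamental domains, with exceptional locus $\bigsqcup_{q\in Q}\pi_Q^{-1}(q)\subset W$, intertwining the two gluings via the identity
\[
\pi_Q\bigl(\sigma(\pi(x))\bigr)=\sigma'\bigl(\pi_0(\pi_Q(x))\bigr),\qquad x\in\del_+W=\del\hat\BB,
\]
and therefore descends to a proper modification $\mu\colon M(\pi,\sigma)\to M(\pi',\sigma')$ which contracts the images of those divisors to the corresponding finitely many points.

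To reduce the general case to the clean one, I would invoke the contraction property of the germ: $F\colon\overline\BB\to\BB$ is holomorphic with $F(\overline\BB)\subset\subset\BB$ and $F(0)=0$, so the iterates $F^m(\overline\BB)$ form a nested decreasing sequence of compact sets shrinking to $\{0\}$, and in particular $F^m(\overline\BB)\cap Q=\emptyset$ for $m$ large enough. Dloussky's strategy in the surface case is then to replace $(\pi,\sigma)$ by an equivalent ``$m$-fold iterated refinement'': one further modifies $\hat\BB$ along the preimages of $Q$ under $F,F^2,\ldots,F^{m-1}$ and correspondingly absorbs iterates of $\sigma$ into a new embedding, whose image sits in a small neighborhood of $\sigma(0)$ disjoint from all exceptional divisors over $Q$. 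The refined data represents the same Kato manifold $M(\pi,\sigma)$ and satisfies the clean assumption, so the construction above applies. The technical heart of the argument --- and its main obstacle --- is precisely this iterative refinement: one must verify that it yields the same Kato manifold (not a finite cover) and that the newly introduced exceptional divisors remain contractible, so that the final contraction $\mu$ exists and produces a smooth $M(\pi',\sigma')$. All these verifications are local and formal, so Dloussky's two-dimensional argument transfers to arbitrary dimension without substantial modification.
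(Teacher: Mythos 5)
The paper does not actually prove this lemma --- it is quoted from Dloussky with the remark that the argument ``readily adapts to higher dimension'' --- so the comparison here is with what a correct proof must contain. Your factorization $\pi=\pi_0\circ\pi_Q$ and your treatment of the \emph{clean} case $F(\overline\BB)\cap Q=\emptyset$ are correct: there $\sigma':=\pi_Q\circ\sigma$ is an embedding, each $E_q=\pi^{-1}(q)$ lies entirely in the interior of $W$, the intertwining identity you write holds because $\pi_0\circ\pi_Q=\pi$, and $\pi_Q$ descends to the desired contraction $\mu$. The genuine gap is the reduction of the general case, which is precisely where the content of Dloussky's Lemme~2.7 lies. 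Your proposed ``iterated refinement'' --- further modifying $\hat\BB$ along the preimages of $Q$ under $F,F^2,\dots$ --- cannot work as stated, for two reasons. First, when $q\in F(\overline\BB)$ the set $F^{-1}(q)=\sigma^{-1}\bigl(E_q\cap\sigma(\BB)\bigr)$ is a \emph{positive-dimensional} analytic subset of $\BB$ (it is the $\sigma$-preimage of an open subset of the divisor $E_q$), so ``modifying along it'' leaves the class of modifications at finitely many points and hence the class of Kato data under consideration. Second, enlarging the modification $\pi$ never presents the \emph{same} Kato manifold: each added centre contributes new exceptional components to every fundamental domain $W_m$ of the universal cover, so the refined data describes a different (bigger) manifold, not $M(\pi,\sigma)$; your own worry about ``finite covers'' signals that you are conflating this with the self-composition construction of Lemma~1.6, which changes the fundamental group rather than the fundamental domain.

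The reduction actually goes in the opposite direction: one \emph{shrinks} the domain of $\sigma$. Since $F(0)=0\notin Q$, the closed analytic set $F^{-1}(Q)$ misses a ball $\overline\BB_\rho$, so $\sigma(\overline\BB_\rho)$ is disjoint from $\bigcup_{q\in Q}E_q$ and $\sigma':=\pi_Q\circ\sigma|_{\overline\BB_\rho}$ is an embedding into $\pi_0^{-1}(\BB_\rho)$, yielding a simple Kato data over $\BB_\rho$. But the price is that the exceptional set of $\mu$ is no longer $\bigsqcup_q E_q$: in $M(\pi,\sigma)$ the image of $E_q$ is completed, across the gluing, by the strict transforms of $F^{-1}(q), F^{-2}(q),\dots$, and one must show (i) that this chain terminates, which follows from your correct observation that $F^m(\overline\BB)\searrow\{0\}$; (ii) that the resulting compact analytic set is exactly the exceptional fibre over $q$ of a finite piece of the infinite tower of modifications forming $\tilde M$, hence contracts to a \emph{smooth} point; and (iii) that only finitely many of the accumulating points $F^j(q)$ interfere with the new fundamental domain. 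None of these verifications is ``local and formal'', and they constitute the technical heart of the lemma that your proposal leaves unproved.
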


\begin{rmk}\label{Schwarz}
Let $(\pi,\sigma)$ be a simple Kato data and let $F:\BB\rightarrow \BB$ be the corresponding germ, with $F(0)=0$. Let $D\subset \BB$ be a domain with smooth boundary which contains $0$ and satisfies $F(\overline{D})\subset D$, or equivalently, $\sigma(\overline D)\subset \pi^{-1}(D)$. Let $\hat D:=\pi^{-1}(D)$ and $W_D:=\hat D-\sigma(D)$. Then the manifold $M_D:=W_D/_\sim$, defined by $x\sim y$ if $y=\sigma\circ\pi(x)\in\del W_D$, is biholomorphic to $M(\pi,\sigma)$. Indeed, clearly the universal covers of the two manifolds coincide, and $W_D$ is nothing but another fundamental domain for the action of $\pi_1(M(\pi,\sigma))=\pi_1(M_D)$, seen as a deck group. In particular,  by the Schwarz lemma (see for instance \cite[Theorem~6]{shabat}), for any $r<1$ one has $F(\overline\BB_r)\subset\BB_r$. Thus, considering $\hat\BB_r=\pi^{-1}(\BB_r)$ and the new Kato data $(\pi_r:=\pi|_{\hat\BB_r},\sigma_r:=\sigma|_{\BB_r})$, the manifold $M(\pi_r,\sigma_r)$ is biholomorphic to $M(\pi,\sigma)$. This remark is at the heart of Brunella's construction of lcK metrics on Kato manifolds which we present in the next section.
\end{rmk}

In practice it is convenient to allow for more general domains of definition for a Kato data than just $\BB$ and its modifications, as long as the existence of a GSS is guaranteed. Suppose we are given a relatively compact domain with smooth boundary $D\subset \CC^n$ which contains $0$. Let $\pi:\hat D\rightarrow D$ be a proper modification at $0$ and let $\sigma:\overline D\rightarrow D$ be a holomorphic embedding, sending $0$ to the exceptional set of $\pi$. Then one can define $W_D:=\hat D-\sigma(D)$ and the compact complex manifold $M(\pi,\sigma):=W_D/_\sim$ as before. Let $F=\pi\circ\sigma$ be the corresponding germ. If there exists an open set $B\subseteq D$ which contains $0$, is biholomorphic to $\BB$ and satisfies $F(\overline B)\subset B$, then a neighbourhood of $\pi^{-1}(\del B)\subset \overline{W}_D-\sigma(\del D)\subset M(\pi,\sigma)$ is a GSS, hence $M(\pi,\sigma)$ is a Kato manifold. In this case, we will call $(\pi:\hat D\rightarrow D,\sigma:\overline D\rightarrow \hat D)$ a Kato data as well.

\subsection*{Composing Kato data}  

Given two Kato data $(\pi_{j}:\hat\D_{j}\rightarrow\D\subset\CC^n,\sigma_j:\overline{D}\rightarrow \hat D_j)$ with corresponding germs $F_j=\pi_j\circ\sigma_j$, $j=1,2$, one can glue the two together in order to form a new Kato data $(\pi_{12},\sigma_{12})$ with corresponding germ $F_1\circ F_2$. We will call the resulting data the composition of $(\pi_1,\sigma_1)$ with $(\pi_2,\sigma_2)$. This is done as follows. Let $W_j:=\hat\D_j-\sigma_j(\D)$ for $j=1,2$. The map $\gamma_1:=\sigma_1\circ\pi_2:\del\hat\D_2\rightarrow\del\sigma_1(\D)$ extends to a biholomorphism of small enough neighborhoods of $\del\hat\D_2$ and $\del\sigma_1(\D)$, so we can define:

\begin{align}\label{glue}
\begin{split}
\hat\D_{12}&:=W_1\sqcup_{\gamma_1}\hat\D_2\\
W_{12}&:=W_1\sqcup_{\gamma_1}W_2.
\end{split}
\end{align}
Let $\iota:\hat\D_2\rightarrow \hat\D_{12}$, $j:W_1\rightarrow \hat\D_{12}$ denote the natural embeddings and let $\sigma_{12}:=\iota\circ\sigma_2:\overline{\D}\rightarrow \hat\D_{12}$, so that $W_{12}=\hat\D_{12}-\sigma_{12}(\D)$. Moreover, define $\pi'_2:\hat\D_{12}\rightarrow \hat\D_1$ by:
\begin{equation*}
\pi'_2|_{j(W_1)}:=\id, \ \ \pi'_2|_{\iota(\hat\D_2)}:=\sigma_1\circ\pi_2\circ\iota^{-1}.
\end{equation*}
Then $\pi'_2$ is precisely the modification $\pi_2$ of the chart $\sigma_1(\D)\subset\hat\D_1$. It follows that $\pi_{12}:=\pi_1\circ\pi'_2:\hat\D_{12}\rightarrow\D$ is a proper modification of $\D$ at finitely many points, hence $(\pi_{12},\sigma_{12})$ is a Kato data. Its corresponding germ is given by:
\begin{equation*}
F_{12}:=\pi_{12}\circ\sigma_{12}=\pi_1\circ\pi'_2\circ\iota\circ\sigma_2=\pi_1\circ\sigma_1\circ\pi_2\circ\sigma_2=F_1\circ F_2.
\end{equation*}

While clearly the resulting Kato data depends on the order in which we performed the gluing, the corresponding manifold does not. More precisely, let us denote by $(\pi_{21},\sigma_{21})$ the composition of $(\pi_2,\sigma_2)$ with $(\pi_1,\sigma_1)$. Then we have:
\begin{lem}\label{ciclic}
There exists a biholomorphism $M(\pi_{12},\sigma_{12})\cong M(\pi_{21},\sigma_{21})$.
\end{lem}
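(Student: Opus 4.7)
The plan is to realize both $M(\pi_{12},\sigma_{12})$ and $M(\pi_{21},\sigma_{21})$ as the quotient of a common universal cover $\tilde M$, obtained by laying out copies of $W_1$ and $W_2$ alternately along $\ZZ$ with two kinds of gluings: $\gamma_1:=\sigma_1\circ\pi_2$ between a copy of $W_1$ and the next copy of $W_2$, and $\gamma_2:=\sigma_2\circ\pi_1$ between a copy of $W_2$ and the next copy of $W_1$. Once both manifolds are seen as quotients of the same $\tilde M$ by the same deck $\ZZ$-action, the biholomorphism is immediate.

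First I would apply \eqref{univc} to $M(\pi_{12},\sigma_{12})$: its universal cover is $\bigsqcup_{m\in\ZZ}\overline W_{12,m}/_\sim$, where each $W_{12,m}$ is a copy of $W_{12}=W_1\sqcup_{\gamma_1}W_2$ glued to the next via $\sigma_{12}\circ\pi_{12}$. Using the definitions $\pi_{12}=\pi_1\circ\pi'_2$ and $\sigma_{12}=\iota\circ\sigma_2$ from \eqref{glue}, I would compute that on $\partial_+W_{12}=\partial\hat\D_1\subset j(W_1)$ one has $\pi'_2=\id$, whence $\sigma_{12}\circ\pi_{12}=\iota\circ\sigma_2\circ\pi_1$; that is, the gluing between successive fundamental domains is precisely $\gamma_2$. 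Hence the universal cover of $M(\pi_{12},\sigma_{12})$ is exactly the infinite chain $\tilde M$ described above, with $W_{12,m}$ occupying two consecutive slots glued by $\gamma_1$ in the middle and by $\gamma_2$ on either end.

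The symmetric computation for the other composition shows that the universal cover of $M(\pi_{21},\sigma_{21})$ is the same infinite chain $\tilde M$: the fundamental domain $W_{21}=W_2\sqcup_{\gamma_2}W_1$ simply occupies a different adjacent window (shifted by one slot) in $\tilde M$, with the outer identification now being $\gamma_1$. In both situations the generator of the deck group $\ZZ\cong\pi_1$ shifts $\tilde M$ by one full period, i.e.\ by the pair consisting of one copy of $W_1$ and one of $W_2$, so the two deck actions on $\tilde M$ agree. The identity map of $\tilde M$ therefore descends to a biholomorphism $M(\pi_{12},\sigma_{12})\cong M(\pi_{21},\sigma_{21})$.

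The only delicate point is the bookkeeping verification that $\sigma_{12}\circ\pi_{12}$ equals $\gamma_2$ on $\partial_+W_{12}$ (and, symmetrically, $\sigma_{21}\circ\pi_{21}=\gamma_1$ on $\partial_+W_{21}$); once this identification is in hand, the rest is a direct comparison of two quotients of the same chain by the same shift. One must also note that the map $\gamma_1$ (resp.\ $\gamma_2$), initially defined only on a codimension-one real hypersurface, extends to a biholomorphism of open neighbourhoods so that the chain $\tilde M$ has a well-defined complex structure—this is precisely the same observation used in the construction of the individual Kato manifolds $M(\pi_j,\sigma_j)$.
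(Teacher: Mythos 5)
Your proposal is correct and follows essentially the same route as the paper: both manifolds are identified with quotients of the common universal cover given by the alternating infinite chain of copies of $W_1$ and $W_2$, of which $W_{12}$ and $W_{21}$ are merely two different fundamental domains for the same shift action. The paper states this in one line; you supply the bookkeeping (the verification that the gluing between consecutive fundamental domains is $\gamma_2$, resp.\ $\gamma_1$) that the paper leaves implicit.
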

\begin{proof}
Let us denote by $M_{12}=M(\pi_{12},\sigma_{12})$ and by $M_{21}=M(\pi_{21},\sigma_{21})$. By \eqref{univc}, we have:
\begin{equation*}
\tilde M_{12}=\bigsqcup_{m\in\ZZ} W_{m,12}=\bigsqcup_{m\in\ZZ}(W_{m,1}\sqcup W_{m,2})=\bigsqcup_{m\in\ZZ}(W_{m,2}\sqcup W_{m,1})=\tilde M_{21}
\end{equation*}
where for each $m\in\ZZ$ and $j\in\{1,2,12\}$, $W_{m,j}$ denotes a copy of $W_j$. In particular, $W_{12}$ and $W_{21}$ are two different fundamental domains for the action of the deck group $\pi_1(M_{12})=\pi_1(M_{21})$, thus $M_{12}\cong M_{21}$.
\end{proof}

\begin{lem}\label{finitecov}
Let $(\pi,\sigma)$ be a Kato data, let $p\geq 2$ and let $(\pi_p:\hat\D_p\rightarrow\D,\sigma_p)$ be the Kato data obtained by composing $(\pi,\sigma)$ with itself $p$ times, as described above. Then the manifold $M_p:=M(\pi_p,\sigma_p)$ is a finite cyclic unramified covering of $M:=M(\pi,\sigma)$ with $p$ sheets.
\end{lem}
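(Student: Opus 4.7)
The plan is to mimic the argument used in the proof of Lemma~\ref{ciclic}, namely to identify the universal covers of $M_p$ and $M$ and then exhibit $\pi_1(M_p)$ as the index-$p$ subgroup $p\ZZ\subset\ZZ\cong\pi_1(M)$ of the corresponding deck groups. Concretely, iterating the construction~\eqref{glue} shows that the fundamental domain $W_p$ of $M_p$ decomposes, up to the gluing maps $\gamma_j=\sigma\circ\pi$, as a disjoint union $W^{(1)}\sqcup W^{(2)}\sqcup\cdots\sqcup W^{(p)}$, where each $W^{(j)}$ is a copy of $W=\hat\D-\sigma(\D)$; and the identification on $\del_+W_p$ is given by $\sigma_p\circ\pi_p=(\sigma\circ\pi)^p$.

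Using~\eqref{univc} applied to $(\pi_p,\sigma_p)$, I would write
\begin{equation*}
\tilde M_p\;=\;\bigsqcup_{m\in\ZZ}W_{m,p}\;=\;\bigsqcup_{m\in\ZZ}\bigl(W^{(1)}_{m}\sqcup\cdots\sqcup W^{(p)}_{m}\bigr),
\end{equation*}
and observe that the boundary identifications between consecutive blocks in this decomposition are precisely the maps $\sigma\circ\pi$ appearing in~\eqref{univc} for $M$. Relabelling $W^{(j)}_m$ as $W_{mp+j-1}$ then gives a biholomorphism $\tilde M_p\cong\tilde M$ that is equivariant with respect to the translation $W_{m,p}\mapsto W_{m+1,p}$ on the left and the translation by $p$ on the right. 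This shows that, as $\ZZ$-coverings, $\tilde M_p\to M_p$ is the same as $\tilde M\to M$ quotiented only by the subgroup $p\ZZ\subset\ZZ=\pi_1(M)$, and hence $M_p\to M$ is a cyclic unramified covering of degree $p$, the deck transformation being the generator of $\ZZ/p\ZZ=\pi_1(M)/\pi_1(M_p)$.

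The only step that requires a little care is checking that the identifications along the internal boundaries of the decomposition $W_p=W^{(1)}\sqcup\cdots\sqcup W^{(p)}$ match the identifications used to build $\tilde M$ out of copies of $W$. This follows from tracing the definition of $\pi'_2$ in~\eqref{glue}: on each $W^{(j)}$, the map $\pi_p$ restricts to (the relevant conjugate of) $\pi$, and $\sigma_p$ lands in the last block $W^{(p)}$ via $\iota\circ\sigma$; so the gluing $\sigma_p\circ\pi_p$ crossing from $W^{(p)}_m$ to $W^{(1)}_{m+1}$ is exactly one step of the $\tilde M$-identification, while the gluings between consecutive blocks within the same $W_{m,p}$ are by construction $\sigma\circ\pi$. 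Once this bookkeeping is done, the rest of the argument is formal, and the covering is obviously unramified since it is a quotient by a free action of $\ZZ/p\ZZ$.
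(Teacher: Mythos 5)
Your proposal is correct and follows essentially the same route as the paper: decompose $W_p$ into $p$ glued copies of $W$ via iterating \eqref{glue}, identify $\tilde M_p$ with $\tilde M$ through \eqref{univc}, and recognize $\pi_1(M_p)$ as the index-$p$ subgroup $p\ZZ\subset\ZZ\cong\pi_1(M)$. The extra bookkeeping you supply about matching the internal boundary identifications is a welcome elaboration of a step the paper treats as immediate, but it does not change the argument.
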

\begin{proof}
Let $W:=\hat\D-\sigma(\D)$ and let $\gamma:=\sigma\circ\pi:\del_+W\rightarrow \del_-W$. Applying inductively \eqref{glue}, we find that $W^{(p)}:=\hat\D_{p}-\sigma_p(\D)=W\sqcup_{\gamma}\ldots\sqcup_{\gamma} W$, where the copies of $W$ are taken $p$ times. Moreover, we have 
\begin{equation*}
\gamma_p:=\sigma_p\circ\pi_p=\gamma^p:\del_+W_p\rightarrow\del_-W_p.
\end{equation*}
Thus, from \eqref{univc} we find:
\begin{equation*}
\tilde M_p=\bigsqcup_{m\in\ZZ}W^{(p)}_{m}=\bigsqcup_{m\in\ZZ}(W_m\sqcup\ldots \sqcup W_m)=\tilde M.
\end{equation*}
In addition, under the natural isomorphism $\pi_1(M)\cong\ZZ$, we find that $p\ZZ\cong\pi_1(M_p)\subset\pi_1(M)$, therefore we have a Galois covering $M_p\rightarrow M$ of deck group $\pi_1(M)/\pi_1(M_p)\cong\ZZ/p\ZZ$.
\end{proof}

\section{Locally conformally K\" ahler metrics on Kato manifolds: Brunella's proof revisited}\label{SecLCK}

We recall the definition of a locally conformally K\"ahler metric:

\begin{defi}\label{lee} A Hermitian metric $g$ on a complex manifold $(M, J)$ is called {\it locally conformally K\" ahler} (lcK for short) if its fundamental form $\Omega:=g(J\cdot,\cdot)$ satisfies $d\Omega=\theta \wedge \Omega$ for a closed one-form $\theta$ on $M$. We call $\theta$ the Lee form of the metric.
\end{defi}

If $(g,\theta)$ is an lcK metric on $(M,J)$ and $f\in\ce(M,\RR)$, then $\e^fg$ is again an lcK metric with Lee form $\theta+df$. In the present section, we always assume that the Lee form is not exact. 

On the universal cover $q:\tilde M\rightarrow M$ of an lcK manifold $(M,J,g,\theta)$ we have $q^*\theta=d\varphi$. Thus, the metric $\tilde g:=\e^{-\varphi}q^*g$ becomes K\"ahler on $(\tilde M,q^*J)$ and any deck transformation $\psi \in \pi_1(M)$ acts on it by $\psi^*\tilde{g}=c_{\psi} \tilde{g}$, for some positive constant $c_{\psi}$. We call $\tilde g$ the corresponding K\"ahler metric, and note that it depends only on the conformal class of $g$. Conversely, any K\"ahler metric on $(\tilde M, q^*J)$ on which $\pi_1(M)$ acts by strict homotheties determines uniquely a conformal class of lcK metrics on $(M,J)$.

A few distinguished types of lcK metrics are of particular interest, namely the Vaisman metrics, the lcK metrics with potential and a larger class including both of them, called exact lcK metrics. An lcK metric $g$ is called {\it Vaisman} if its Lee form $\theta$ is parallel with respect to the Levi-Civita connection $\nabla^g$. It is called {\it with potential} if the corresponding K\" ahler form $\tilde \Omega$ on the universal cover admits a potential $\phi$, meaning that $\tilde \Omega=dd^c \phi$, such that $\psi^*\phi=c_{\psi}\phi$ for any $\psi \in \pi_1(M)$. Finally, it is called {\it exact} if its fundamental form $\Omega$ equals $d_{\theta}\eta$, for some one-form $\eta$, where $\theta$ is the Lee form of the metric and we denoted by $d_{\theta}:=d-\theta \wedge \cdot$. Any Vaisman metric is with potential and any lcK metric with potential is exact (see \cite{ov11}).

Brunella gives in \cite{bru} a method of constructing conformal classes of lcK metrics on every Kato surface. However, we note that his argument holds in general, for any dimension, as soon as  $\pi$ in the Kato data consists in a sequence of a blow-ups along smooth centers. In what follows, we give the outline of Brunella's proof, insisting on the arbitrariness of the dimension:

\begin{thm}\label{LCK}(see \cite[Theorem 1]{bru})
Every Kato manifold arising from a Kato data $(\pi,\sigma)$ for which $\pi$ is a sequence of blow-ups along smooth centers carries a locally conformally K\" ahler metric.
\end{thm}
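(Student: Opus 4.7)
The plan is to follow Brunella's strategy: construct a K\"ahler form on the universal cover $\tilde M$ that is rescaled by a constant factor under the deck action of $\pi_1(M)\cong\ZZ$, thereby producing an lcK metric on $M$. As a preliminary reduction, Remark~\ref{Schwarz} allows me to replace the Kato data by one for which $F:=\pi\circ\sigma$ is strictly contracting, $F(\overline\BB)\subset\BB$, so that $\pi$ becomes a biholomorphism on a two-sided collar of $\del\hat\BB$ (after an implicit enlargement of $\hat\BB$).

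The core of the argument is to produce, for some $\lambda>1$, a K\"ahler form $\omega$ on $\hat\BB$ satisfying the boundary conditions
\begin{equation*}
(i)\ \omega=\pi^*\omega_0\ \text{on a neighborhood of } \del\hat\BB,\qquad (ii)\ \sigma^*\omega=\lambda\,\omega_0\ \text{on a neighborhood of } \del\BB,
\end{equation*}
where $\omega_0:=dd^c\|z\|^2$. Granted such an $\omega$, I would set $\tilde\omega|_{W_m}:=\lambda^{-m}\omega|_W$ on each fundamental domain $W_m\subset\tilde M$, where $W:=\hat\BB-\sigma(\BB)$. Across the interface $\del_+W_m\sim\del_-W_{m+1}$, identified by $\gamma=\sigma\circ\pi$, combining (i) and (ii) gives
\begin{equation*}
\gamma^*(\lambda^{-m-1}\omega)=\lambda^{-m-1}\pi^*\sigma^*\omega=\lambda^{-m-1}\pi^*(\lambda\omega_0)=\lambda^{-m}\pi^*\omega_0=\lambda^{-m}\omega,
\end{equation*}
so the two local descriptions of $\tilde\omega$ agree on a neighborhood of the gluing, yielding a globally defined K\"ahler form on $\tilde M$. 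The deck generator $T$ then satisfies $T^*\tilde\omega=\lambda^{-1}\tilde\omega$ by construction, and this homothety factor descends to an lcK metric on $M$ whose Lee form is closed but non-exact (its de Rham class encodes $\log\lambda\neq 0$).

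The hard part is the construction of $\omega$. The naive candidate $\pi^*\omega_0$ fails on both counts: it degenerates along the exceptional set $E$ of $\pi$, and $\sigma^*(\pi^*\omega_0)=F^*\omega_0$ is strictly dominated by $\omega_0$ on $\overline\BB$ by the Schwarz contraction, the opposite of what (ii) demands. I would instead look for $\omega=dd^c\phi$ with $\phi=\pi^*\|z\|^2+\psi$, where $\psi$ is supported in a compact subset of $\hat\BB$ away from $\del\hat\BB$, so that (i) is automatic. The real task is to choose $\psi$ plurisubharmonic (so that $dd^c\phi>0$ everywhere) and tuned so that $\sigma^*\psi$ raises the potential $\|F(z)\|^2$ up to $\lambda\|z\|^2$ modulo pluriharmonic terms on a neighborhood of $\del\BB$. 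This is where the smooth-center hypothesis on $\pi$ enters decisively: inductively on the sequence of blow-ups, the classical construction of K\"ahler forms on smooth blow-ups of K\"ahler manifolds supplies local plurisubharmonic potentials concentrated near each exceptional component. Summing scaled copies of these near $E$, interpolating smoothly with $\pi^*\|z\|^2$ outside their supports, and using the compactness of $\sigma(\overline\BB)\subset\hat\BB$ to arrange a scaling factor $\lambda>1$ on the inner collar, one obtains the desired $\omega$. The main obstacle is performing this interpolation while preserving strict plurisubharmonicity throughout; without the smooth-center hypothesis such local potentials need not exist, in line with the obstruction exhibited in Proposition~\ref{NOLCK}.
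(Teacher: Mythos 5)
Your overall architecture is exactly the one the paper follows (Brunella's): produce a K\"ahler form on $\hat\BB$ that is a constant multiple of the flat form near each of the two boundary components of the fundamental domain, so that the gluing map $\gamma=\sigma\circ\pi$ becomes a strict homothety, then propagate by powers of that factor over the universal cover. Your gluing computation and the descent to an lcK class are correct (whether the factor is $>1$ or $<1$ is immaterial; with your normalization it would in fact typically come out $<1$).

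The gap is in the construction of $\omega$ satisfying your condition (ii), which is where essentially all of the work lies. Writing $\omega=\pi^*dd^c||z||^2+dd^c\psi$ with $\psi$ supported away from $\del\hat\BB$, condition (ii) demands the pointwise identity $dd^c(\sigma^*\psi)=\la\, dd^c||z||^2-dd^c||F(z)||^2$ on a full collar of $\del\BB$, i.e. a prescription of $\omega$ on the outer collar of $\sigma(\BB)$ --- a region that has nothing to do with a neighbourhood of the exceptional set $E$. Summing scaled local potentials concentrated near $E$ yields the standard K\"ahler form on the blow-up and settles positivity along $E$ together with your condition (i), but it leaves $\omega=\pi^*dd^c||z||^2$ on the collar of $\del\sigma(\BB)$, where $\sigma^*\omega=dd^c||F(z)||^2$ is not a constant multiple of the flat form; and ``compactness of $\sigma(\overline\BB)$'' does not produce the required correction. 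The paper fills this gap with two regularized-maximum interpolations plus a crucial shrinking step: first, the potential $\phi$ of $\sigma^*\omega_0$ is normalized to have a nondegenerate minimum at $0$, which makes it comparable to $\rho=\la(1+||z||^2)$ on spheres $\del\BB_r$, $\del\BB_{r'}$ and lets the regularized maximum replace $\phi$ by $\rho$ on $\BB_r$ --- the metric is flattened near $\sigma(0)$, not near $\del\sigma(\BB)$; second, the Kato data is shrunk to radius $r$ (Remark~\ref{Schwarz}), so that the inner boundary of the new fundamental domain, $\del\sigma(\BB_r)$, lies inside the flattened region; third, a second interpolation between the pushed-forward potential and $c_1\rho+c_2$ on $\overline\BB_r\setminus\BB_s$ flattens the metric near the new outer boundary $\del\hat\BB_r$, the freedom in $c_1$ supplying the homothety factor. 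Without an argument of this kind --- in particular without relocating the inner interface into a region where the metric has already been made flat --- your condition (ii) is unproved and the proof is incomplete at its central step.
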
 

\begin{proof}
Let us denote by $M$ the $n$-dimensional Kato manifold given by the Kato data $(\pi,\sigma)$. The strategy consists in constructing a K\"ahler metric on the universal $\tilde M$ on which $\pi_1(M)$ acts by homotheties. In doing so, we use the description of $\tilde M$ given by \eqref{univc}.

To begin with, we consider a K\" ahler metric $\omega_0$ on $\hat{\DD}$, smooth up to the boundary, which exists due to the following result (see for instance \cite[Proposition~3.24]{voisin}):
\begin{prop}
If $X$ is a K\" ahler manifold and $Y \subset X$ is a compact complex submanifold, then the blown-up manifold along $Y$ is K\" ahler. 
\end{prop}
We start with modifying $\omega_0$ on $\sigma(\DD)$ such that on a neighbourhood of $\sigma(0)$, it becomes flat. The K\" ahler form $\sigma^*\omega_0$ on $\DD$ admits a smooth potential $\phi$.  We can suppose, without loss of generality, that $\phi(0)=0$ and $d_0\phi=0$. Moreover, after eliminating the pluriharmonic part of the Taylor expansion of $\phi$ at $0$, we can suppose that around $0$, $\phi$ writes: 
\begin{equation*}
\phi(z)=\frac{1}{2}\sum_{j,k=1}^n \frac{\partial^2\phi}{\del z_j \del\overline{z}_k}(0)z_j\overline{z}_k+O(||z||^3).
\end{equation*}
Note that the Hessian of $\phi$ at $0$ is positively defined since $\phi$ is a strictly plurisubharmonic function, hence $0$ is a non-degenerate local minimum of $\phi$.

We take now a positive constant $\lambda>0$ and $0< r < r'< 1$ such that the function $\rho(z):=\lambda(1+||z||^2)$ is greater than $\phi$ on $\overline{\DD}_r$ and smaller than $\phi$ on $\partial{\DD}_{r'}$. 
Indeed, there exists $r'>0$ such that $\phi>0$ on $\overline\DD_{r'}-\{0\}$. Thus we take:
\begin{equation*}
\la:=2\min_{\del \DD_{r'}}\frac{\phi}{1+{r'}^2}>0.
\end{equation*}
Moreover, as the function $\rho-\phi$ is strictly positive in $0$, it is strictly positive on $\overline\DD_r$ for some $0<r<r'$.

Following \cite[Lemma~5.18]{dem}, we can define on $\DD_{r'}$ the function $\tilde{\phi}$ as the regularized maximum $max_\epsilon(\phi, \rho)$, where $\epsilon>0$ is such that $\rho-\phi>\epsilon$ on a neighbourhood of $\partial{\DD}_r$ and $\phi-\rho>\epsilon$ on a neighbourhood of $\partial{\DD}_{r'}$. Moreover, we put $\tilde\phi=\phi$ on $\DD-\DD_{r'}$. Then $\tilde \phi$ is smooth, strictly plurisubharmonic and equal to $\rho$ on some neighbourhood of $\partial{\DD}_r$ and to $\phi$ outside $\DD_{r'}$. It follows that the \Ka\ form  $\sigma_*(dd^c\tilde{\phi})$ on $\sigma(\DD)$ glues to $\omega_0$ on $\hat\DD-\sigma(\DD)$ and defines a new smooth K\" ahler metric $\omega_1$ on $\hat{\DD}$.

The second step is to modify $\omega_1$ near the boundary of $\hat{\DD}_r=\pi^{-1}(\DD_r)$.  Consider the $(1,1)$-current on $\DD$ given by $\tilde\omega_1:=\pi_*\omega_1$. As $\DD$ is Stein, there exists a plurisubharmonic function $\psi$ on $\DD$ such that $\tilde\omega_1=dd^c\psi$. As $\tilde\omega_1$ is a smooth \Ka\ form on $\DD-\{0\}$, it follows by \cite[Corollary~3.30]{dem} that also $\psi$ is smooth on $\DD-\{0\}$. Take now $0<s<r$ such that $\sigma(\overline{\DD}_r) \subset \hat{\DD}_s$. There exist real numbers $c_1>0$ and $c_2\in \RR$ such that:
\begin{equation*}
c_1 \cdot \lambda(1+r^2) + c_2 > \max_{z \in \partial \DD_r} \psi(z)=:M, \ \ \ \ c_1 \cdot \lambda(1+s^2) + c_2 < \min_{z \in \partial \DD_s}\psi(z)=:m.
\end{equation*} 
Indeed, since $\psi$ is strictly plurisubharmonic, the function $(0,1)\ni t\mapsto \max_{\del \BB_t}\psi$ is stricty increasing by the maximum principle, thus $m<M$. It follows that for any $\al>1$, we can take:
\begin{equation}\label{Leeopen}
c_1=\al\la\frac{M-m}{r^2-s^2}>0, \ \ c_2\in \left(M-c_1\la (1+r^2),m-c_1\la (1+s^2)\right)\neq\emptyset.
\end{equation}

Thus we have $c_1\rho+c_2>\psi$ on $\partial \DD_r$ and $c_1\rho+c_2<\psi$  on $\partial \DD_s$. Let $\tilde{\psi}$ be the strictly plurisubharmonic function on $\DD$, smooth on $\DD-\{0\}$, defined as the regularized maximum of $\psi$ and of $c_1\rho+c_2$ on $\overline{\DD}_r \setminus \DD_s$, and equal to $\psi$ on $\DD_s$. Then the \Ka\ metric $\pi^*dd^c\tilde\psi$ on $\hat\BB-\hat\BB_{s/2}$ glues to $\omega_1$ on $\hat\BB_{s/2}$, defining a smooth \Ka\ metric $\omega_2$ on $\hat\BB$. Moreover, if $U$ is a small enough neighbourhood of $\del\sigma(\DD_r)$, then $V=(\sigma\circ\pi)^{-1}(U)$ is a neighbourhood of $\del\hat\BB_r$ and we have:
\begin{equation*}
(\sigma\circ\pi)^*\omega_2|_U=\frac{1}{c_1}\omega_2|_V.
\end{equation*}

Therefore we can define a smooth K\"ahler metric $\tilde\omega$ on $\tilde M=\bigsqcup_{m\in\ZZ}W_{(r),m}$ by setting $\tilde\omega:=c_1^{-m}\omega_2$ on $W_{(r),m}$, $m\in\ZZ$, where $W_{(r),m}$ is a copy of $W_{(r)}=\hat\BB_r-\sigma(\BB_r)$. Clearly the generator $\psi$ of $\pi_1(M)$ acts on $\tilde\omega$ by $\psi^*\tilde\omega=c_1^{-1}\tilde\omega$, thus $\tilde \omega$ defines a conformal class of lcK metrics on the manifold $M$.
\end{proof}

\vline

However, not all Kato manifolds admit lcK metrics. In order to construct such counter-examples, let us first recall one of Hironaka's examples \cite{hirn} of a smooth compact complex manifold bimeromorphic to a projective manifold, admitting no \Ka\ metric. Let $n\geq 3$ and consider in $\CP^n$ a curve $c$ which is smooth everywhere but at a double point $P\in c$, where it auto-intersects transversally. Let $U\subset\CC\PP^n$ be a chart around $P$ in which $(c,P)$ looks like $(\{(z_1,\ldots, z_n)\in\CC^n, z_1z_2=0, z_3=\ldots=z_n=0\},0)$ in $\CC^n$. Let $c_1, c_2$ denote the irreducible components of $c$ in this chart. Let $\mu_1:\hat U_1\rightarrow U$ denote the blow-up of $U$ along $c_1$, name $c_2'$ the strict transform of $c_2$ via $\mu_1$ and then let $\mu_2:\hat U_2\rightarrow \hat U_1$ denote the blow-up of $\hat U_1$ along $c'_2$. Also let $\mu_3=\widehat{\CC\PP^n-\{P\}}\rightarrow \CC\PP^n-\{P\}$ denote the blow-up along $c-\{P\}$. Then $\mu_3^{-1}(U-\{P\})$ is biholomorphic to $(\mu_1\circ\mu_2)^{-1}(U-\{P\})$, so we can glue $\widehat{\CC\PP^n-\{P\}}$ and $\hat U_2$ along these open sets in order to form the smooth compact complex manifold $H_n$ together with a map $\mu:H_n\rightarrow \CC\PP^n$ which coincides with $\mu_3$ or with $\mu_1\circ\mu_2$ on the appropriate open sets which cover $H_n$. Hence $\mu$ is a proper modification of $\CC\PP^n$. It turns out that $H_n$ is not projective algebraic and it admits no \Ka\ metric for $n\geq 3$. This comes from the fact that $H_n$ contains a smooth rational curve which has zero homology class, which is impossible on a compact \Ka\ manifold.

\begin{prop}\label{NOLCK}
There exist Kato manifolds which admit no lcK metric.
\end{prop}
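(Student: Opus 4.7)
The argument rests on combining the Hironaka-type obstruction to K\"ahlerness with the universal cover description \eqref{univc}. Any lcK metric $(g,\theta)$ on $M$ pulls back to a K\"ahler metric $\tilde g=\e^{-\varphi}q^*g$ on $\tilde M$ on which $\pi_1(M)$ acts by strict homotheties, so to exclude lcK metrics on $M$ it is enough to exhibit a Kato $M$ whose universal cover $\tilde M$ contains a compact smooth rational curve $\tilde C$ with $[\tilde C]=0$ in $H_2(\tilde M,\RR)$: for any closed positive $(1,1)$-form $\tilde\omega$ one would then have $\int_{\tilde C}\tilde\omega>0$ by effectiveness and $\int_{\tilde C}\tilde\omega=0$ by null-homology, a contradiction.

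For $n\geq 4$ the idea is to embed Hironaka's recipe inside the exceptional divisor of a blow-up at the origin. Let $\mu_0:\hat\BB_0\to\BB$ be the blow-up of $\BB\subset\CC^n$ at $0$, with exceptional divisor $E_0\cong\CC\PP^{n-1}$ of complex dimension $n-1\geq 3$. Inside $E_0$ pick two smooth curves $\tilde c_1,\tilde c_2$ meeting transversally at a single point $Q$, and apply to $\hat\BB_0$ along $\tilde c_1\cup\tilde c_2$ the very construction recalled before the statement: on a neighbourhood $U$ of $Q$ blow up $\tilde c_1$ and then the strict transform of $\tilde c_2$; on $\hat\BB_0\setminus\{Q\}$ blow up the smooth curve $(\tilde c_1\cup\tilde c_2)\setminus\{Q\}$; the two constructions agree over $U\setminus\{Q\}$ and glue into a proper modification $\pi':\hat\BB'\to\hat\BB_0$. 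Since every center of $\pi'$ lies in $E_0=\mu_0^{-1}(0)$, the composition $\pi:=\mu_0\circ\pi':\hat\BB'\to\BB$ is a proper modification of $\BB$ at the single point $0$. Moreover, restricting each blow-up of $\pi'$ to the hypersurface $E_0$ produces precisely the same recipe on $\CC\PP^{n-1}$, so the strict transform $\tilde E_0\subset\hat\BB'$ of $E_0$ under $\pi'$ is the Hironaka manifold $H_{n-1}$. In particular, $\tilde E_0$ is a compact complex manifold containing a smooth rational curve $C$ with $[C]=0$ in $H_2(\tilde E_0,\RR)$.

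To complete the Kato data, choose a holomorphic embedding $\sigma:\overline\BB\to\hat\BB'$ with $\sigma(0)\in\pi^{-1}(0)$, based at a point of the exceptional set lying outside the compact subvariety $\tilde E_0$ -- for instance inside one of the exceptional divisors of $\pi'$ over a point of $\tilde c_1\setminus\{Q\}$ -- via a standard chart there. After centering the resulting germ $F=\pi\circ\sigma$ and shrinking $\BB$ as in Remark~\ref{Schwarz}, one obtains a valid Kato data $(\pi,\sigma)$; since $C\subset\tilde E_0$ is a fixed compact set disjoint from $\sigma(0)$, one can moreover arrange $\sigma(\BB)\cap C=\emptyset$. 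Thus $C$ lies in the fundamental domain $W=\hat\BB'\setminus\sigma(\BB)$ and its lift $\tilde C\subset W_0\subset\tilde M$ described by \eqref{univc} is a smooth compact rational curve. Any singular $3$-chain in $\tilde E_0$ with boundary $C$ sits inside $W_0$ as well, so $[\tilde C]=0$ in $H_2(\tilde M,\RR)$, and the Hironaka obstruction recalled in the first paragraph yields the conclusion.

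The substantive point is the compatibility of the Hironaka modification with the Kato data structure: one must check that the composition $\pi=\mu_0\circ\pi'$ is indeed a proper modification at the single point $0$ (immediate from the fact that the centers of $\pi'$ lie over $0$), that the strict transform $\tilde E_0$ really coincides with $H_{n-1}$ (immediate from the standard formula for strict transforms of a hypersurface containing the blow-up center), and that $\sigma$ can be chosen simultaneously to contract in the sense of Remark~\ref{Schwarz} and to avoid $C$. The last is a routine open condition because $C\subset\tilde E_0\subset\pi^{-1}(0)$ is a proper sub-hypersurface and the contraction can be obtained on arbitrarily small balls. Once these compatibilities are in place, the non-existence of lcK metrics is a direct consequence of the homological obstruction.
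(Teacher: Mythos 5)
Your overall strategy (embed a Hironaka-type non-K\"ahler modification inside the exceptional divisor of the blow-up at $0$, so that the universal cover $\tilde M$ contains a compact non-K\"ahler submanifold, respectively a null-homologous rational curve, contradicting the existence of the K\"ahler metric $\tilde g=\e^{-\varphi}q^*g$) is exactly the one the paper follows. However, there is a genuine gap in your choice of center: you take \emph{two smooth curves} $\tilde c_1,\tilde c_2\subset E_0$ meeting transversally at a single point $Q$, blow up $\tilde c_1$ then the strict transform of $\tilde c_2$ near $Q$, and blow up $(\tilde c_1\cup\tilde c_2)\setminus\{Q\}$ elsewhere. Since $\tilde c_1$ and $\tilde c_2$ are globally distinct smooth curves, this glued modification is nothing but the global composition $\mathrm{Bl}_{\tilde c_2'}\circ\mathrm{Bl}_{\tilde c_1}$ of two blow-ups along smooth centers (the strict transform $\tilde c_2'$ is again smooth, being the blow-up of the curve $\tilde c_2$ at the point $Q$). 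By the very proposition quoted in the proof of Theorem~\ref{LCK}, the result is K\"ahler, and in particular contains no null-homologous rational curve: in $H_2$ of the iterated blow-up the generic fiber classes $\ell_1,\ell_2$ over $\tilde c_1$ and $\tilde c_2$ lie in different direct summands, so the component of the fiber over $Q$, which represents $\ell_2-\ell_1$, is \emph{not} zero. Your obstruction therefore evaporates, and the manifold you build is in fact covered by Theorem~\ref{LCK}.

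The point of Hironaka's example, and what the paper uses, is that the center must be an \emph{irreducible} curve $c$ with a node $P$ (e.g.\ a nodal plane cubic inside $E_0\cong\CC\PP^{n-1}$, which needs $n-1\geq 3$, i.e.\ $n\geq 4$). Then the two branches $c_1,c_2$ at $P$ are only locally defined, the local modification ``blow up $c_1$, then $c_2'$'' near $P$ glued to the blow-up of $c\setminus\{P\}$ does \emph{not} factor globally through smooth blow-ups, and irreducibility of $c$ forces the generic fiber classes over the two branches to coincide (transport one to the other along $c$), so that the extra component of the fiber over $P$ is a smooth rational curve homologous to $\ell-\ell=0$. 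With this correction the rest of your argument (placing $\sigma(0)$ in the exceptional set away from the strict transform of $E_0$, shrinking the ball as in Remark~\ref{Schwarz}, and passing the null-homology from the fundamental domain $W_0$ to $\tilde M$) goes through; alternatively, as the paper does, one can simply observe that $\tilde M$ contains the compact non-K\"ahler manifold $H_{n-1}$ as a complex submanifold, to which any K\"ahler metric on $\tilde M$ would restrict.
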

\begin{proof}
We will construct a Kato data giving a manifold not admitting lcK metrics. 

Let us suppose $n\geq 4$. Let $\BB\subset\CC^n$ denote the standard ball and let $\pi_1:\hat\BB^{(1)}\rightarrow \BB$ be the blow-up along $0\in\BB$. Let $E_1\cong\CC\PP^{n-1}=\pi_1^{-1}(0)$ denote the exceptional divisor. Choose $c\subset E_1$ a curve with a double point $P$ like in the Hironaka construction, and let $\pi_2:\hat\BB^{(2)}\rightarrow \hat\BB^{(1)}$ denote the modification along $c$ which was described above. Namely, outside $P$, $\pi_2$ is the blow-up of $\hat\BB^{(1)}-\{P\}$ along $c-\{P\}$, while around $P$, it consists in blowing up one branch of $c$, and then the other one. Then $E_2:=\pi_2^{-1}(c)$ is a singular hypersurface $S$ which autointersects along a $\CC\PP^{n-2}$. The strict transform of $E_1$ via $\pi_2$, denoted by $E_1'$, is biholomorphic to $H_{n-1}$, the manifold given by Hironaka's example. It intersects $E_2$ along a manifold biholomorphic to $H_{n-2}$.

Choose now $\sigma:\overline{\BB}\rightarrow \BB-E'_1$ a holomorphic embedding with $\sigma(0)\in E_2$, and put $\pi:\hat\BB^{(2)}\rightarrow\BB$,  $\pi=\pi_1\circ\pi_2$. Then $(\pi,\sigma)$ is a Kato data, defining a Kato manifold $M$ of complex dimension $n$. Let $\tilde M$ denote its universal cover. By construction, we have: 
\begin{equation*}
H_{n-1}\cong E_1'\subset W=\hat\BB^{(2)}-\sigma(\overline\BB)\subset \tilde M.
\end{equation*}
Thus, $\tilde M$ cannot admit any \Ka\ metric, since such a metric would  induce one on $H_{n-1}$, which is impossible. We conclude that $M$ admits no lcK metric.  
\end{proof}

%\begin{remark}
%Note however that for the above example $M$, there exists $M^*$ together with a blow-up map with smooth center $\mu:M^*\rightarrow M$ so that $M^*$ is an lcK Kato manifold. Indeed, the manifold can be
%\end{remark}

%However, even such Kato manifolds are bimeromorphic to lcK Kato manifolds. More precisely, we have:
%\begin{prop}
%Let $M$ be a Kato manifold. Then there exists a Kato manifold $M^*$ which admits an lcK metric and a proper modification $\mu: M^*\rightarrow M$ consisting in a finite sequence of blow-ups.
%\end{prop}
%\begin{proof}
%Let $(\pi,\sigma)$ denote a Kato data for $M$, with $\pi=\hat\BB\rightarrow \BB\subset\CC^n$. 
%\end{proof}

\begin{prop}\label{nopotential} There are no exact lcK metrics on a Kato manifold $M$ which is not primary Hopf. In particular, there are no lcK metrics with potential.
\end{prop}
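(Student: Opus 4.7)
Suppose for contradiction that $(g,\theta)$ is an exact lcK metric on $M$ with $\Omega = d_\theta \eta$. The plan is to lift to the universal cover, where exactness becomes global exactness of the induced K\"ahler form, and then to integrate a top power against a compact complex analytic cycle to contradict positivity. On $q: \tilde M \to M$, writing $q^*\theta = d\varphi$ and setting $\tilde\eta := e^{-\varphi} q^*\eta$, a direct computation using $q^*\Omega = d(q^*\eta) - d\varphi \wedge q^*\eta$ gives
\begin{equation*}
\tilde\Omega := e^{-\varphi} q^*\Omega = d\tilde\eta,
\end{equation*}
so the corresponding K\"ahler form $\tilde\Omega$ on $\tilde M$ is globally exact, and consequently $\tilde\Omega^k = d(\tilde\eta \wedge \tilde\Omega^{k-1})$ is exact for every $k \geq 1$.

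The heart of the plan is to exhibit a compact irreducible complex analytic subset $S \subset \tilde M$ of some positive complex dimension $k$. Since $M$ is not primary Hopf, in any Kato data $(\pi,\sigma)$ for $M$ the modification $\pi$ is non-trivial, so its exceptional set $E \subset \hat\BB$ is a non-empty compact analytic set of positive dimension. When $\pi$ is a sequence of blow-ups along smooth centers (the setting of Theorem~\ref{LCK}), the irreducible components of $E$ are simply connected projective bundles and, together with the fact that $\pi_1(M) \cong \ZZ$, they lift to compact analytic subsets of $\tilde M$. For a general Kato data, one can instead pass to the composed Kato data $(\pi_{12}, \sigma_{12})$ of Lemma~\ref{finitecov}: inside $\hat\BB_{12} = W_1 \sqcup_{\gamma_1} \hat\BB_2$, the strict transform under $\pi_2$ of an irreducible component of the first exceptional set $E_1$ is a compact analytic subset which can be arranged to be disjoint from $\sigma_{12}(\overline\BB) \subset \hat\BB_2$, and therefore descends to a compact analytic subset of the covering $\tilde M_{12} = \tilde M$.

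With such an $S$ in hand, $\tilde\Omega|_{S^{\mathrm{reg}}}$ is a K\"ahler form on the regular locus, so $\int_S \tilde\Omega^k > 0$ by positivity. On the other hand, because $\tilde\Omega^k$ is exact with primitive $\tilde\eta \wedge \tilde\Omega^{k-1}$, Stokes' theorem applied to the compact analytic cycle $S$ yields $\int_S \tilde\Omega^k = 0$, a contradiction. The ``in particular'' statement is then immediate, since every lcK metric with potential is exact. The main obstacle in this plan is the second step: producing the compact analytic subset $S \subset \tilde M$ for an arbitrary Kato data, where one must ensure (via the strict-transform-after-composition argument just sketched) that some positive-dimensional compact piece of the exceptional set survives disjoint from the identification region $\sigma(\overline\BB)$.
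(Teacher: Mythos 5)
Your contradiction mechanism --- integrate an exact positive form over a compact complex cycle --- is the same one the paper uses, and your computation that $\e^{-\varphi}q^*\Omega=d(\e^{-\varphi}q^*\eta)$ on the universal cover is correct, as is the Stokes argument once a compact cycle is in hand. The genuine gap is the step you yourself flag as the main obstacle: producing a compact positive-dimensional analytic subset $S\subset\tilde M$. Neither of your two constructions is justified. In the blow-up case, the simply connected objects you invoke are irreducible components of the exceptional set $E\subset\hat\BB$; these do not sit in $M$ (let alone $\tilde M$) as compact subsets, because they are cut by the removal of $\sigma(\BB)$, and only their truncations, glued to strict transforms living in \emph{other} copies of $W$, assemble into the components of $\tilde C$. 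The compactness of those components is a non-trivial fact (Dloussky's Proposition~1.11 in the surface case) and does not follow from ``the components of $E$ are simply connected and $\pi_1(M)\cong\ZZ$''. For general Kato data, the claim that the strict transform of a component of $E_1$ inside $\hat\BB_{12}$ ``can be arranged to be disjoint from $\sigma_{12}(\overline\BB)$'' is unsubstantiated: that strict transform meets $\iota(\hat\BB_2)$ in the strict transform of $\sigma_1^{-1}(E_1)$ under $\pi_2$, which in general passes through the exceptional set of $\pi_2$ and hence through $\sigma_2(\overline\BB)$ (recall the standing assumption $\sigma(P)\cap E\neq\emptyset$); composing the data with itself once need not make the component close up.

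The paper sidesteps all of this by staying on $M$: it takes an immersed rational curve $j:\CP^1\rightarrow M$ inside an irreducible component of the cycle $C$ obtained by gluing the exceptional set (non-empty since $M$ is not primary Hopf). Because $H^1_{dR}(\CP^1)=0$, one has $j^*\theta=df$ automatically, hence $\e^{-f}j^*\Omega=d(\e^{-f}j^*\eta)$ and Stokes on $\CP^1$ gives $0<\int_{\CP^1}\e^{-f}j^*\Omega=0$. No universal cover and no compactness issue arise: the simple connectedness of $\CP^1$ substitutes for the global exactness you were after. If you insist on your version, the clean way to obtain your $S$ is precisely to lift such a rational curve ($j$ lifts to $\tilde j:\CP^1\rightarrow\tilde M$ since $\CP^1$ is simply connected, and $\tilde j(\CP^1)$ is a compact analytic subset) --- but at that point the detour through $\tilde M$ buys nothing.
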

\begin{proof} Let $C\neq\emptyset $ denote the divisor of $M$ obtained by gluing the exceptional set of $\pi$. Assume by contradiction that $\Omega=d_\theta \eta$, and take an immersion $j:\CP^1\rightarrow M$ of $\CP^1$ into an irreducible component of $C$. Then $j^*\theta=df$ for some $f\in\ce(\CP^1)$, hence the \Ka\ metric $\e^{-f}j^*\Omega$ on $\CP^1$ satisfies:
\begin{equation*}
0<\int_{\CP^1}\e^{-f}j^*\Omega=\int_{\CP^1}d(\e^{-f}j^*\eta)=0
\end{equation*}
which is impossible.
\end{proof}

\section{De Rham and twisted cohomology}\label{SecCoh}

Topologically, Kato manifolds are modifications of primary Hopf manifolds. Indeed, according to \cite[Theorem 1]{k}, for any Kato manifold $M$, there exists a complex analytical family $\Pi: \mathcal{X} \rightarrow \mathbb{D}=\{t\in\CC, |t|<1\}$ such that $\Pi^{-1}(0)=M$ and $M_t:=\Pi^{-1}(t)$ is a compact complex manifold biholomorphic to a modification of a primary Hopf manifold. %with infinite cyclic fundamental group. 
Moreover, the manifolds $M_t$ are constructed as follows: given a Kato data $(\pi, \sigma)$ for $M$, one defines a family $\sigma_t$ depending analytically on $t$ such that $\sigma_0=\sigma$, $(\pi,\sigma_t)$ is also a Kato data and $\sigma_t(0)$ does not meet the exceptional set for $t\neq 0$. Then, the manifold $M_t$ is obtained from $(\pi, \sigma_t)$ by performing the Kato construction. In particular, when $\pi$ is a sequence of blow-ups with smooth centers, the manifolds $M_t$ are also obtained as a sequence of blow-ups with smooth centers from a primary Hopf manifold. 

%In particular, for $t\neq 0$, the exceptional divisor $E_0=E_t$ in $\hat \DD$ glues to give the exceptional divisor $C_t$ in $M_t$. 

On the other hand, given a compact complex manifold $M$ of dimension $n$ and $Z\subset M$ a smooth compact complex submanifold of codimension $r$, let $\pi:\hat M=Bl_ZM\rightarrow M$ be the blow-up of $M$ along $Z$ and denote by $j:E=\pi^{-1}(Z)\rightarrow \hat M$ the inclusion of the exceptional divisor. Let $j_!:H^{p-2}(E,\ZZ)\rightarrow H^{p}(\hat M,\ZZ)$ denote the Gysin morphism induced by $j$, defined as the composition of the maps:
\begin{equation*}
\xymatrix{ &H^{p-2}(E,\ZZ)\ar[r]^{PD} &H_{2n-p}(E,\ZZ) \ar[r]^{j_*} &H_{2n-p}(\hat M, \ZZ) \ar[r]^{PD} &H^p(\hat M,\ZZ) }
\end{equation*}
where $PD$ denotes the Poincar\'e duality morphism. Let us also denote by $h=c_1(\mathcal{O}_E(-1))\in H^2(E,\ZZ)$, where $\mathcal{O}_E(-1)$ denotes the tautological line bundle over $E\cong \PP(\mathcal N_MZ)$. Finally, let us introduce the map:
\begin{equation*}
\psi_q:H^{p-2q-2}(Z,\ZZ)\rightarrow H^{p}(\hat M,\ZZ), \ \  \psi_q (a)=j_!(h^q\wedge\pi|_E^*(a)).
\end{equation*} 

Then the maps $\psi_q$ and $\pi^*$ are injective, and we have the following description of the de Rham cohomology of $\hat M$ (see for instance \cite[Theorem~7.31]{voisin}):
\begin{equation}\label{coh}
H^p(\hat M,\ZZ)=\pi^*H^p(M,\ZZ)\oplus\bigoplus_{q=0}^{r-2}\psi_qH^{p-2q-2}(Z,\ZZ).
\end{equation}

In our case, we have to consider a primary Hopf manifold $M$, which is diffeomorphic to $\Ss^1\times\Ss^{2n-1}$, so that $H^0(M,\ZZ)=H^1(M,\ZZ)=H^{2n-1}(M,\ZZ)=H^{2n}(M,\ZZ)=\ZZ$ and all the other cohomology groups are $0$. Then, take $k$ consecutive blow-ups of $M$ at smooth centers $Z_p\subset E_p$,  where $E_p$ is the exceptional divisor obtained at step $p$, $1\leq p\leq k-1$, and $Z_0$ is a point. Let $\hat M$ be the resulting manifold. Using \eqref{coh} inductively, we obtain that $H^p(\hat M,\ZZ)$, as an abelian group, is completely determined by the groups $H^\bullet(Z_p,\ZZ)$. In the simplest case, we find:

\begin{cor}
Let $(\pi,\sigma)$ be a simple Kato data such that $\pi$ is a composition of $k$ blow-ups at points, and let $M=M(\pi,\sigma)$ be the corresponding $n$-dimensional Kato manifold. Then we have the following Betti numbers for $M$:
\begin{align*}
&b_{0}=b_1=b_{2n-1}=b_{2n}=1\\
&b_{2p+1}=0, \ \ &1\leq p\leq n-2\\
&b_{2p}=k, \ \ &1\leq p \leq n-1.
\end{align*}
In particular, the Euler characteristic $\chi(M)=k(n-1)$ is positive for $k>0$.
\end{cor}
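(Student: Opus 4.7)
The plan is to reduce to computing the Betti numbers of a primary Hopf manifold iteratively blown up at $k$ points, and then to apply the blow-up formula~\eqref{coh}.

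First, I would invoke the family construction recalled at the beginning of Section~\ref{SecCoh}: there exists an analytic family $\Pi\colon \mathcal{X}\to \mathbb{D}$ with $\Pi^{-1}(0)=M$, such that for $t\neq 0$ the fiber $M_t$ is a primary Hopf manifold $H$ blown up $k$ times at points, following the same pattern of centers as $\pi$. By Ehresmann's theorem applied to the restriction of $\Pi$ over a path from $t$ to $0$, all fibers are diffeomorphic, so $H^\bullet(M,\mathbb{Z})\cong H^\bullet(M_t,\mathbb{Z})$, and it suffices to compute the cohomology of $M_t$.

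Next, since a primary Hopf manifold is diffeomorphic to $\mathbb{S}^1\times \mathbb{S}^{2n-1}$, the Künneth formula gives $b_0(H)=b_1(H)=b_{2n-1}(H)=b_{2n}(H)=1$ and all other Betti numbers zero. I would then apply \eqref{coh} inductively, with each center $Z$ being a point of codimension $r=n$. Since $H^\bullet(Z,\mathbb{Z})$ is concentrated in degree zero, the only nontrivial summand $\psi_q H^{p-2q-2}(Z,\mathbb{Z})\cong \mathbb{Z}$ arises when $p=2q+2$, with $0\leq q\leq n-2$. Thus a single point blow-up adds exactly one copy of $\mathbb{Z}$ to each even degree $2,4,\ldots,2n-2$, leaving all other degrees unchanged. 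After $k$ iterations, the even Betti numbers $b_{2p}$ for $1\leq p\leq n-1$ each equal $k$, while $b_0,b_1,b_{2n-1},b_{2n}$ remain equal to $1$ and all remaining odd Betti numbers stay zero, yielding the stated list.

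Finally, for the Euler characteristic, the contributions $b_0-b_1$ and $-b_{2n-1}+b_{2n}$ both cancel, the intermediate odd-degree terms vanish, and the $n-1$ consecutive even-degree terms in between each contribute $k$, giving $\chi(M)=(n-1)k$, which is positive whenever $k>0$. The mildest subtlety will be that successive centers may lie on previously produced exceptional divisors, but this does not affect \eqref{coh} since only the (trivial) topology of the center point enters that formula. No genuine obstacle is expected.
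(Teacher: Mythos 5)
Your proposal is correct and follows essentially the same route as the paper: reduce to a $k$-fold point blow-up of a primary Hopf manifold via Kato's deformation family, use the cohomology of $\Ss^1\times\Ss^{2n-1}$, and apply the blow-up formula \eqref{coh} inductively with a point as center (so only $p=2q+2$, $0\leq q\leq n-2$, contributes). The paper leaves the corollary as an immediate consequence of exactly this discussion, so your added remarks on Ehresmann's theorem and on centers lying in exceptional divisors are just slightly more explicit versions of what the paper takes for granted.
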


Let now $\theta$ be any closed one-form on a Kato manifold $M$, and recall the differential operator 
\begin{equation*}
d_\theta\in\Omega^\bullet(M)\rightarrow \Omega^{\bullet+1}(M), \ \ d_\theta\alpha=d\alpha-\theta\wedge\alpha.
\end{equation*} 
A straightforward computation shows that $d_{\theta}^2=0$ and hence we can consider the cohomology 
\begin{equation*}
H^\bullet_{\theta}:=\frac{\mathrm{Ker}\, d_\theta}{\mathrm{Im}\, d_{\theta}}
\end{equation*}
that we shall call {\it twisted cohomology}. Via standard Hodge theory, it can be seen that these groups are finite dimensional on a compact manifold. Moreover, they only depend on the de Rham class $[\theta] \in H^1_{dR}(M)$. 

Since the twisted cohomology is a topological object, we can again use Kato's result and suppose that our Kato manifolds are modifications of primary Hopf manifolds. Then the following computes the twisted cohomology for blown-up manifolds:
\begin{thm}(\cite[Theorem 4.5]{meng})
Let $M$ be a complex $n$-dimensional manifold, let $\iota:Z \rightarrow M$ be the inclusion of a complex submanifold of codimension $r$, and let $\pi:\hat M \rightarrow M$ denote the blow-up of $M$ along $Z$. Moreover, let $\theta$ be a closed one-form on $M$. Then for any $0 \leq p \leq 2n$
\begin{equation}\label{blowup}
H^p_{\theta}(M) \oplus \bigoplus_{q=0}^{r} H_{\iota^*\theta}^{p-2-2q}(Z) \simeq H^p_{\pi^*\theta}(\hat M).
\end{equation}  
\end{thm}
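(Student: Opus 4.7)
The plan is to interpret the twisted cohomology $H^*_\theta(M)$ as the cohomology with coefficients in the flat complex line bundle $L_\theta$ determined by the class $[\theta]\in H^1(M,\RR)$, so that the standard sheaf-theoretic toolkit---Mayer--Vietoris, Leray spectral sequences, projection formula---becomes available. Under this identification $L_{\pi^*\theta}=\pi^*L_\theta$ and $L_{\iota^*\theta}=\iota^*L_\theta$, so pulling back the one-form corresponds to pulling back the local system. Homotopy invariance for local system cohomology is immediate and will be used freely below.

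The core of the proof is to run the Leray spectral sequence for $\pi\colon \hat M\to M$ with coefficients in $\pi^*L_\theta$:
\[
E_2^{p,q}=H^p\!\bigl(M,\,R^q\pi_*(\pi^*L_\theta)\bigr)\ \Longrightarrow\ H^{p+q}_{\pi^*\theta}(\hat M).
\]
By the projection formula one has $R^q\pi_*(\pi^*L_\theta)\simeq L_\theta\otimes R^q\pi_*\CC_{\hat M}$, and the higher direct images of the constant sheaf under a blow-up of a smooth center of codimension $r$ are classically known: $R^0\pi_*\CC_{\hat M}=\CC_M$, while for $q\ge 1$ the sheaf $R^q\pi_*\CC_{\hat M}$ is supported on $Z$, equals $\iota_*\CC_Z$ for $q\in\{2,4,\ldots,2(r-1)\}$, and vanishes otherwise---this is the fiberwise cohomology of $\CP^{r-1}$. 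Thus $E_2^{p,0}=H^p_\theta(M)$, $E_2^{p,2k}=H^p_{\iota^*\theta}(Z)$ for $1\le k\le r-1$, and all other entries vanish.

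It then remains to show that the spectral sequence degenerates at $E_2$ and that the associated filtration splits. The splittings should be produced exactly as in the description \eqref{coh} of ordinary cohomology: $\pi^*$ furnishes a section for the $q=0$ row, while the Gysin pushforwards $\psi_q(\cdot)=j_!(h^q\wedge(\pi|_E)^*(\cdot))$, built from the first Chern class $h$ of $\mathcal O_E(-1)$, furnish sections for the higher rows. Equivalently, one may argue by Mayer--Vietoris on the cover $\hat M=U_1\cup U_2$, with $U_1=\hat M\setminus E$ and $U_2$ a tubular neighborhood of $E$, compared to the cover $M=V_1\cup V_2$ with $V_2$ a tubular neighborhood of $Z$; since $\pi$ is biholomorphic between $U_1$ and $V_1$ and induces a homotopy equivalence between $U_1\cap U_2$ and $V_1\cap V_2$, the five-lemma reduces the whole formula to isolating the extra contribution of $U_2\simeq E$ over $V_2\simeq Z$.

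The main obstacle is therefore the \emph{twisted projective bundle formula} for $\pi|_E\colon E\to Z$,
\[
H^p_{(\pi|_E)^*\iota^*\theta}(E)\ \simeq\ \bigoplus_{q=0}^{r-1} H^{p-2q}_{\iota^*\theta}(Z)\cdot h^q.
\]
This is the technical heart of the argument. The point is that, because the twist is pulled back from the base, the twisted differential $d_{(\pi|_E)^*\iota^*\theta}$ acts as the ordinary de Rham differential along the fibers, so the fiberwise cohomology is the untwisted $H^*(\CP^{r-1})$; a Leray--Hirsch argument with the untwisted classes $1,h,\ldots,h^{r-1}$---whose restrictions to each fiber form a basis---then establishes the desired free-module structure over $H^*_{\iota^*\theta}(Z)$. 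Once this is in place, substituting into the Leray sequence (or the Mayer--Vietoris comparison), cancelling the $q=0$ term of the projective bundle formula against $H^*_{\iota^*\theta}(Z)$, and reindexing by $q\mapsto q+1$, yields the stated blow-up formula.
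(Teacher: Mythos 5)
The paper offers no proof of this statement: it is quoted with attribution from Meng [Men18], so the only available comparison is with Meng's own argument. Your proposal reconstructs that argument correctly and in essentially the same way: twisted cohomology as cohomology of the rank-one local system $L_\theta$, the twisted projective bundle formula for $E\to Z$ via Leray--Hirsch with the \emph{untwisted} classes $1,h,\dots,h^{r-1}$ (legitimate because the twist is pulled back from the base, so the fiberwise complex is the ordinary de Rham complex of $\CP^{r-1}$), and a Mayer--Vietoris comparison between $\hat M=(\hat M\setminus E)\cup\nu(E)$ and $M=(M\setminus Z)\cup\nu(Z)$, in which injectivity of $\pi^*$ (from $\pi_*\pi^*=\id$) reduces everything to the cokernel of $H^p_{\theta}(\nu(Z))\to H^p_{\pi^*\theta}(\nu(E))$ supplied by the bundle formula. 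One remark: your argument produces the summands $H^{p-2-2q}_{\iota^*\theta}(Z)$ for $0\le q\le r-2$ (the positive-degree even cohomology of the fiber $\CP^{r-1}$), which agrees with Meng's theorem and with the untwisted formula \eqref{coh}; the upper limit $r$ in the statement as printed here is a typo for $r-2$, so you are proving the correct version of the formula rather than the one displayed.
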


In our case, the actual computation of the twisted cohomology is facilitated by the following: 
\begin{lem} The twisted cohomology of a primary Hopf manifold vanishes with respect to any closed non-exact one form. 
\end{lem}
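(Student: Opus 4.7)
The plan is to reduce the statement to a direct calculation on $\Ss^1$ via a Künneth-type decomposition. First, I recall that any primary Hopf manifold $M$ is diffeomorphic to $\Ss^1 \times \Ss^{2n-1}$, so by the Künneth formula $H^1_{dR}(M,\RR) \cong \RR$, generated by $\mathrm{pr}_1^* d\xi$ where $\xi$ is the angular coordinate on $\Ss^1$. A short calculation shows that if $\theta' = \theta + df$ then the multiplication operator $\alpha \mapsto e^f \alpha$ intertwines $d_\theta$ and $d_{\theta'}$, so the twisted cohomology $H^\bullet_\theta(M)$ depends only on the de Rham class $[\theta] \in H^1_{dR}(M,\RR)$. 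Since $\theta$ is assumed closed and not exact, we may therefore assume $\theta = \lambda \cdot \mathrm{pr}_1^* d\xi$ for some $\lambda \in \RR^*$.

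Next I would establish a Künneth formula for twisted cohomology in this product setting. Writing $\theta = \theta_1 + \theta_2$ with $\theta_1 = \lambda \,\mathrm{pr}_1^* d\xi$ and $\theta_2 = 0$, the operator $d_\theta$ satisfies the Leibniz rule
\begin{equation*}
d_\theta(\alpha_1 \wedge \alpha_2) = d_{\theta_1}\alpha_1 \wedge \alpha_2 + (-1)^{|\alpha_1|}\alpha_1 \wedge d_{\theta_2}\alpha_2
\end{equation*}
for forms pulled back from the two factors. The de Rham complex of $\Ss^1 \times \Ss^{2n-1}$ is thus the tensor product of the complexes on the factors (with the respective twists), so in view of the finite-dimensionality of everything one obtains $H^\bullet_\theta(\Ss^1 \times \Ss^{2n-1}) \cong H^\bullet_{\lambda d\xi}(\Ss^1) \otimes H^\bullet_{dR}(\Ss^{2n-1},\RR)$.

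It therefore suffices to show $H^\bullet_{\lambda d\xi}(\Ss^1) = 0$ for $\lambda \neq 0$. In degree $0$, the equation $d_{\lambda d\xi} f = (f' - \lambda f)d\xi = 0$ has only the solutions $f(\xi) = Ae^{\lambda \xi}$, which are $2\pi$-periodic only if $A=0$. In degree $1$, any $1$-form $g\,d\xi$ is automatically $d_{\lambda d\xi}$-closed, and solving $f' - \lambda f = g$ on $\Ss^1$ by variation of constants gives $f(\xi)=e^{\lambda\xi}\!\left[C + \int_0^\xi e^{-\lambda s}g(s)\,ds\right]$; the periodicity condition $f(2\pi) = f(0)$ can be solved for $C$ precisely because $e^{2\pi\lambda} \neq 1$. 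Hence every closed $1$-form is exact, and $H^1_{\lambda d\xi}(\Ss^1) = 0$.

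The main obstacles are mostly bookkeeping: verifying the sign conventions in the Leibniz rule needed for Künneth, and confirming that the standard finite-dimensional Künneth argument still applies when the differentials on the two factors are Morse–Novikov differentials rather than $d$. Both go through exactly as in the untwisted case, so combining (1)–(3) yields $H^\bullet_\theta(M) = 0$.
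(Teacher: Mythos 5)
Your argument is correct, but it concludes the vanishing by a genuinely different route than the paper. Both proofs start the same way: identify the primary Hopf manifold diffeomorphically with $\Ss^1\times\Ss^{2n-1}$ and use the gauge invariance of $H^\bullet_\theta$ to replace $\theta$ by $\lambda\,d\xi$ with $\lambda\neq 0$. At that point the paper observes that $\lambda\,d\xi$ is parallel for the product metric $d\xi^2+g$ and invokes the theorem of de Le\'on--L\'opez--Marrero--Padr\'on on the vanishing of the twisted (Lichnerowicz--Jacobi) cohomology in the presence of a non-zero parallel one-form; you instead prove a K\"unneth decomposition $H^\bullet_\theta(\Ss^1\times\Ss^{2n-1})\cong H^\bullet_{\lambda d\xi}(\Ss^1)\otimes H^\bullet_{dR}(\Ss^{2n-1})$ and kill the first factor by an explicit ODE computation on the circle. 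Your route is self-contained and elementary, and the circle computation is the transparent ``reason'' the cohomology vanishes (the monodromy $e^{2\pi\lambda}\neq 1$ of the associated rank-one local system); the cost is that you must justify the K\"unneth step for twisted differentials, which is cleanest if you phrase $H^\bullet_\theta$ as cohomology with coefficients in a flat line bundle and cite K\"unneth for local systems --- the de Rham complex of the product is not literally the tensor product of the factor complexes, only quasi-isomorphic to it, so that sentence of yours should be tightened. The paper's route is shorter but outsources the real content to an external theorem. Either way the conclusion stands.
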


\begin{proof} 
Any primary Hopf manifold is diffeomorphic to $M=\Ss^1 \times \Ss^{2n-1}$, and any closed one form $\theta$ on it is cohomologous to $\la dt$, where $\la\in\RR$ and $t$ is the local standard coordinate on $\Ss^1$. Now, for any metric $g$ on $\Ss^{2n-1}$, $\la dt$ is parallel with respect to the Levi-Civita connection of $g_1=dt^2+g$ on $M$. Hence, by applying \cite[Theorem~4.5]{llmp} for $\lambda dt$, with $\lambda \neq 0$, the conclusion follows. 
\end{proof}

\medskip

Thus, in order to compute the twisted cohomology of a  Kato manifold given by a sequence of blow-ups with smooth centers, we can use the above two results inductively, and as in the case of de Rham cohomology, we find that $H^\bullet_\theta(\hat M)$ is completely determined by the groups $H^\bullet_{\iota^*\theta}(Z_p)$, where $Z_p$ are the smooth centers at which the blow-ups were performed. In the simplest case we obtain:

\begin{cor}
Let $M$ be a Kato manifold of dimension $n$ with simple Kato data $(\pi,\sigma)$, such that $\pi$ is a composition of $k$ blow-ups at points. Also let $\theta$ be a closed non-exact one-form on $M$. Then we have the following twisted Betti numbers:
\begin{align*}
&\dim H^{2p}_{\theta}(M)=k, \ \ &1\leq p \leq n-1\\
&\dim H^{p}_{\theta}(M)=0, \ \ &p\in\{2q+1, q\in\ZZ\}\cup\{0, 2n\}.
\end{align*}
\end{cor}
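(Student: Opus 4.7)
The plan is to follow the same pattern as for the de Rham Betti numbers: reduce via Kato's deformation theorem to a $k$-fold point blow-up of a primary Hopf manifold, then iterate the twisted blow-up formula~\eqref{blowup}, taking as base case the vanishing lemma proved immediately above.

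First, I would invoke the family $\Pi:\mathcal{X}\to\DD$ recalled at the beginning of the section, whose central fiber is $M$. For $t\neq 0$, Kato's construction perturbs only the embedding $\sigma$, keeping $\pi$ (and hence the underlying sequence of $k$ point blow-ups) fixed, so by Ehresmann's lemma each $M_t$ is diffeomorphic to $M$ and is a $k$-fold point blow-up of a primary Hopf manifold $H_t$. Twisted cohomology depends only on the smooth structure and on the de Rham class of the twisting form; the trivialization of $\Pi$ therefore transports $\theta$ to a closed non-exact form $\theta_t$ on $M_t$ and yields $H^\bullet_\theta(M)\cong H^\bullet_{\theta_t}(M_t)$.

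Second, I would peel off the $k$ point blow-ups one at a time. Writing $N_0:=H_t$ and letting $\pi_j:N_j\to N_{j-1}$ be the successive blow-ups at points $p_j$, set $\theta^{(0)}$ to be the form on $H_t$ induced by $\theta_t$ and $\theta^{(j)}:=\pi_j^*\theta^{(j-1)}$. Each center $\{p_j\}$ has codimension $n$, the restriction $\iota_{p_j}^*\theta^{(j-1)}$ vanishes identically, and the twisted cohomology of a point is $\RR$ in degree $0$ and zero otherwise. Plugging this into~\eqref{blowup} shows that each step adds exactly one copy of $\RR$ to $H^p_{\theta^{(j-1)}}(N_{j-1})$ in every even degree $p\in\{2,4,\ldots,2n-2\}$ and nothing in the remaining degrees (degree $0$, the odd degrees, and degree $2n$). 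Starting from the base case $H^\bullet_{\theta^{(0)}}(H_t)=0$, provided by the previous lemma since $\theta_t$ is non-exact, and iterating $k$ times yields the stated twisted Betti numbers.

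The only conceptual subtlety is in the first step: one must check that the family produced by Kato really keeps $\pi$ unchanged so that the diffeotype of $M_t$ is indeed a $k$-fold point blow-up of $H_t$, and that non-exactness of $\theta$ transfers to $\theta_t$. Both are essentially built in: the first is a direct inspection of Kato's construction of the family (only $\sigma$ is deformed), and the second follows from the canonical identification of first de Rham groups along the smooth trivialization of $\Pi$. Everything after that is bookkeeping.
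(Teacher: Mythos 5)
Your proposal is correct and follows exactly the route the paper intends: reduce via Kato's deformation family to a $k$-fold point blow-up of a primary Hopf manifold, then iterate the twisted blow-up formula with the vanishing lemma for Hopf manifolds as the base case. Your bookkeeping (contributions only in degrees $2,\ldots,2n-2$, one copy per blow-up) matches the stated answer, and your attention to the transfer of non-exactness of $\theta$ along the family is exactly the point the paper leaves implicit.
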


\section{Kato manifolds generalizing Inoue-Hirzebruch surfaces}\label{SecIH}

\subsection{Kato matrices} In this section, we introduce a class of matrices that we will use in the sequel to construct a certain class of Kato manifolds. We also discuss some of their properties that we will need later.

Let $A=(a_{pq})_{1\leq p\leq m,1\leq q\leq n}$ be an $m\times n$ matrix with entries that are natural numbers. We introduce the following notation, which we shall use throughout the paper:
\begin{equation*}
z^A:=(z_1^{a_{11}}z_2^{a_{12}}\ldots z_n^{a_{1n}}, \ldots,z_1^{a_{m1}}z_2^{a_{m2}}\ldots z_n^{a_{mn}}), \ \ \ z\in\CC^n.
\end{equation*}
Moreover, we will denote by $F_A:\CC^n\rightarrow\CC^m$ the holomorphic map $F_A(z)=z^A$. Note that for any two $n\times n$ matrices $A$, $B$ one has the relation $F_{AB}=F_A\circ F_B$. In particular, if $A\in\GL(n,\ZZ)$, then $F_A\in\Aut(\Cn{n})$.

Let now $e_1,\ldots, e_n$ denote the standard basis of $\RR^n$, written as $n\times 1$ matrices, let $c=\sum_{j=1}^ne_j$ and consider  the $n\times n$ {\it elementary matrices} defined by:
\begin{equation}\label{elmatrices}
A_j= \begin{pmatrix}e_1 & \cdots & e_{j-1} & e_{j+1} & \cdots & e_n & c
\end{pmatrix}, \ \ 1\leq j\leq n. 
\end{equation}
Written on components, these are:
\begin{equation*}
A_{1} = 
 \begin{pmatrix}
  0 & 0 & 0 & \cdots & 0 & 1 \\
  1 & 0 & 0 & \cdots & 0 & 1 \\
  0 & 1 & 0 & \cdots & 0 & 1 \\
  \vdots  & \vdots & \vdots  & \ddots & \vdots &  \vdots  \\
  0 & 0 & 0 & \cdots & 1 & 1 
 \end{pmatrix}
 \ \
A_{2} = 
 \begin{pmatrix}
  1 & 0 & 0 & \cdots & 0 & 1 \\
  0 & 0 & 0 & \cdots & 0 & 1 \\
  0 & 1 & 0 & \cdots & 0 & 1 \\
  \vdots  & \vdots & \vdots  & \ddots & \vdots &  \vdots  \\
  0 & 0 & 0 & \cdots & 1 & 1 
 \end{pmatrix}
\
\ldots
\
A_{n} = 
 \begin{pmatrix}
  1 & 0 & 0 & \cdots & 0 & 1 \\
  0 & 1 & 0 & \cdots & 0 & 1 \\
  0 & 0 & 1 & \cdots & 0 & 1 \\
  \vdots  & \vdots & \vdots  & \ddots & \vdots &  \vdots  \\
  0 & 0 & 0 & \cdots & 0 & 1 
 \end{pmatrix}.
\end{equation*}

We are interested in products of such matrices. We will call a product of elementary matrices $A=A_{j_1}\cdots A_{j_k}$, where repetitions are allowed, a \textit{Kato matrix} if $A\neq A_n^p$ for some $p\geq 1$. These matrices were first considered by Dloussky in \cite{d1} in the case $n=2$.

\begin{lem}\label{contractie}
Given a product of elementary matrices $A=A_{j_1}\cdots A_{j_k}$, with $1\leq j_1,\ldots, j_k\leq n$, the map $F_A$ satisfies $F_A(\overline{\DD})\subset\DD$ if and only if $A$ is not of the form $A=A_qA_n^{p}$, for $1\leq q\leq n$ and $p\geq 0$.
\end{lem}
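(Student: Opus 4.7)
The plan is to reduce everything to an explicit norm computation on each elementary matrix $A_j$ and then track how boundary points propagate through the composition.

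First, I would read off $F_{A_j}$ directly from the columns of $A_j$ given in~\eqref{elmatrices}, obtaining
\begin{equation*}
F_{A_j}(z_1,\ldots,z_n) = (z_1 z_n,\ldots,z_{j-1}z_n,\,z_n,\,z_j z_n,\ldots,z_{n-1}z_n),
\end{equation*}
and derive from it the single key identity, valid uniformly in $j\in\{1,\ldots,n\}$:
\begin{equation*}
||F_{A_j}(z)||^2 = |z_n|^2\bigl(1+||z||^2-|z_n|^2\bigr).
\end{equation*}
Setting $t:=|z_n|^2$ and using $||z||^2-|z_n|^2 \leq 1-t$ on $\overline\DD$, this is bounded by $t(2-t)\leq 1$, with equality only when $t=1$ and $||z||^2-|z_n|^2=0$, i.e.\ $z=\zeta e_n$ with $|\zeta|=1$; in that case a direct computation gives $F_{A_j}(\zeta e_n)=\zeta e_j$. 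Consequently, each $F_{A_j}$ sends $\DD$ into $\DD$ and $\del\DD$ onto $\del\DD$.

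Next, for an arbitrary product $A=A_{j_1}\cdots A_{j_k}$ one has $F_A=F_{A_{j_1}}\circ\cdots\circ F_{A_{j_k}}$, so the inclusion $F_A(\overline\DD)\subset\overline\DD$ is automatic, and the problem reduces to deciding when some $z\in\overline\DD$ satisfies $||F_A(z)||=1$. Assuming such a $z$ exists, I would set $z^{(0)}:=z$ and $z^{(i)}:=F_{A_{j_{k-i+1}}}(z^{(i-1)})$; since each elementary map sends the open ball into itself, a downward induction forces every $z^{(i)}$ to lie on $\del\DD$. The equality case above then makes $z^{(i-1)}=\zeta_i e_n$ with $|\zeta_i|=1$, whence $z^{(i)}=\zeta_i e_{j_{k-i+1}}$. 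For $i<k$ this must again be a scalar multiple of $e_n$, forcing $j_{k-i+1}=n$; running $i$ from $1$ to $k-1$ yields $j_2=\cdots=j_k=n$, so $A=A_{j_1}A_n^{k-1}$, of the claimed form $A_qA_n^p$.

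For the converse, a short induction based on $F_{A_n}(\zeta e_n)=\zeta e_n$ gives $F_{A_n^p}(\zeta e_n)=\zeta e_n$, hence $F_{A_qA_n^p}(\zeta e_n)=\zeta e_q\in\del\DD$ for $|\zeta|=1$, showing that such an $A$ fails to send $\overline\DD$ strictly into $\DD$. The essential content is the norm identity; once that is established the remainder is bookkeeping on how boundary points move through the composition, and I do not anticipate any serious obstacle beyond correctly justifying the open-ball-to-open-ball claim via the equality analysis.
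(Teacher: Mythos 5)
Your proof is correct and follows essentially the same route as the paper: the norm identity $||F_{A_j}(z)||^2=|z_n|^2(1+||z||^2-|z_n|^2)$ on $\overline\DD$, the equality analysis forcing boundary points to be $\zeta e_n$, the propagation through the composition to get $j_2=\cdots=j_k=n$, and the converse via $F_{A_qA_n^p}(e_n)=e_q$. Your write-up is if anything slightly more explicit than the paper's about why all intermediate points must lie on $\del\DD$.
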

\begin{proof}
Suppose $A=(a_{st})_{s,t}$ does not satisfy $F_A(\overline{\DD})\subset\DD$.
Since for any $1\leq s \leq n$ we have $\sum_{t=1}^na_{st}\geq 1$, it follows that $||F_A(z)||\leq ||z||$ each time $z\in\DD$. Thus $F_A(\overline{\DD})\not\subset \DD$  if and only if there exists $w\in\del\DD$ with $||F_A(w)||=1$.

Now for an elementary matrix $A_p$, $1\leq p\leq n$, $||F_{A_p}(w)||=1$ reads:
\begin{equation*}
1=|w_n|^2(1+\sum_{s=1}^{n-1}|w_s|^2)=|w_n|^2(2-|w_n|^2)
\end{equation*}
i.e. $|w_n|=1$ and $w_1=\ldots=w_{n-1}=0$. Thus, if we let $d=\{z\in\CC^n, z_1=\ldots=z_{n-1}=0\}$, since $F_A$ does not contract the ball, it follows that either $k=1$ or that $k>1$ and $F_{A_q}(d)=d$ for any $q\in\{j_2,\ldots, j_k\}$. On the other hand, we have $F_{A_q}(d)=d$ if and only if $q=n$. Hence, if $k>1$ then $A_{j_2}=\ldots=A_{j_k}=A_n$. 

Conversely, note that if we denote by $e_1,\ldots, e_n\in\CC^n$ the standard basis, then we have:
\begin{equation*}
F_{A_qA_n^p}(e_n)=e_q, \ \ \  1\leq q\leq n, \ p\geq 0.
\end{equation*}
Thus indeed the matrix $A_qA_n^p$ does not contract $\BB$.
\end{proof}

Consider now the ball for the second norm $\BB_{1,2}:=\{z\in\CC^n, ||z||^2_{1,2}=\sum_{j=1}^{n-1}|z_j|^2+2|z_n|^2<1\}$. Note that $\BB_{1,2}$ is biholomorphic to $\BB$ via $(z_1,\ldots, z_n)\mapsto (z_1,\ldots, z_{n-1},\sqrt{2}z_n)$. As it turns out, all Kato matrices define germs which contract this second ball:

\begin{lem}\label{contractie12}
If $A\in\GL(n,\ZZ)$ is a Kato matrix, then $F_A(\overline{\BB}_{1,2})\subset\BB_{1,2}$.
\end{lem}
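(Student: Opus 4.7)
The plan is to verify the contraction property for each elementary factor $F_{A_j}$ separately, and then combine by composition. First, I would write out $F_{A_j}$ explicitly: by inspection of \eqref{elmatrices} one has $F_{A_n}(z)=(z_1 z_n,\ldots,z_{n-1}z_n, z_n)$, and for $j<n$, $F_{A_j}(z)$ equals $z_n$ in the $j$-th slot while every other slot has the form $z_k z_n$ (with an appropriate index shift). A direct computation then gives
\[
||F_{A_n}(z)||_{1,2}^2=|z_n|^2\Bigl(\sum_{k=1}^{n-1}|z_k|^2+2\Bigr),\qquad ||F_{A_j}(z)||_{1,2}^2=|z_n|^2\Bigl(\sum_{k=1}^{n-2}|z_k|^2+2|z_{n-1}|^2+1\Bigr)\ (j<n).
\]

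Second, I would estimate these expressions on $\overline{\BB}_{1,2}$ using $t:=|z_n|^2\in[0,1/2]$. The bound $\sum_{k=1}^{n-1}|z_k|^2+2|z_{n-1}|^2\leq 2\sum_{k=1}^{n-1}|z_k|^2\leq 2(1-2t)$ yields $||F_{A_j}(z)||_{1,2}^2\leq t(3-4t)\leq 9/16$ for $j<n$, so $F_{A_j}(\overline{\BB}_{1,2})\subset\BB_{1,2}$ strictly. In the remaining case $j=n$, one finds $||F_{A_n}(z)||_{1,2}^2\leq t(3-2t)$, whose maximum on $[0,1/2]$ equals $1$ and is attained only when $t=1/2$ and $z_1=\cdots=z_{n-1}=0$. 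These extremal points all sit on $\del\BB_{1,2}$, so $F_{A_n}$ sends $\overline{\BB}_{1,2}$ into $\overline{\BB}_{1,2}$, and more importantly $\BB_{1,2}$ into $\BB_{1,2}$.

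Third, writing $F_A=F_{A_{j_1}}\circ\cdots\circ F_{A_{j_k}}$, I would induct on $k$. When $j_k\neq n$, the last factor already maps $\overline{\BB}_{1,2}$ strictly into $\BB_{1,2}$ by step two, and each preceding factor preserves $\BB_{1,2}$ (either by step two for $j_i<n$, or by the last sentence of step two for $j_i=n$), hence $F_A(\overline{\BB}_{1,2})\subset\BB_{1,2}$. When $j_k=n$, the assumption $A\neq A_n^p$ forces the truncated product $B:=A_{j_1}\cdots A_{j_{k-1}}$ to be itself a Kato matrix, so by induction $F_B(\overline{\BB}_{1,2})\subset\BB_{1,2}$, and then $F_A(\overline{\BB}_{1,2})=F_B(F_{A_n}(\overline{\BB}_{1,2}))\subset F_B(\overline{\BB}_{1,2})\subset\BB_{1,2}$.

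The conceptual point behind the weight $2$ on $|z_n|^2$ is that $F_{A_n}$ fixes the entire line $\{z_1=\cdots=z_{n-1}=0\}$ pointwise; on the standard ball $\BB$ this prevents strict contraction along that line, which is precisely the obstruction appearing in Lemma~\ref{contractie}. The factor $2$ in $||\cdot||_{1,2}$ shrinks $|z_n|$ to at most $1/\sqrt{2}$ on $\overline{\BB}_{1,2}$, pushing the fixed line onto the boundary so that a single strict-contraction factor $F_{A_j}$ with $j<n$ — which the definition of a Kato matrix guarantees — is enough to bring the whole closed ball inside $\BB_{1,2}$. No serious obstacle is expected; the proof amounts to a controlled direct computation followed by a brief compositional induction.
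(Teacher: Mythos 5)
Your proof is correct and follows essentially the same route as the paper's: compute $||F_{A_j}(z)||_{1,2}^2$ for each elementary factor, observe that the factors with $j<n$ send $\overline{\BB}_{1,2}$ strictly into $\BB_{1,2}$ while $F_{A_n}$ preserves $\overline{\BB}_{1,2}$ (and $\BB_{1,2}$), and conclude by composing, using that a Kato matrix has at least one factor $A_{j}$ with $j\neq n$. The only blemish is an index slip in your second step: the quantity you bound should be $\sum_{k=1}^{n-2}|z_k|^2+2|z_{n-1}|^2$ (as in your displayed formula), not $\sum_{k=1}^{n-1}|z_k|^2+2|z_{n-1}|^2$ (for which the claimed inequality fails when $z_1=\cdots=z_{n-2}=0$); with that correction the bound by $2\sum_{k=1}^{n-1}|z_k|^2\leq 2(1-2t)$ and everything after it goes through.
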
 
\begin{proof}
Let $z\in\overline{\BB}_{1,2}$. Suppose first that $A=A_q$ with $1\leq q\leq n-1$. Then we find:
\begin{align*}
||F_{A_q}(z)||^2_{1,2}&=|z_n|^2(1+\sum_{j=1}^{n-2}|z_j|^2+2|z_{n-1}|^2)\\
&<|z_n|^22(1+\sum_{j=1}^{n-1}|z_j|^2)\\
&\leq (1-\sum_{j=1}^{n-1}|z_j|^2)(1+\sum_{j=1}^{n-1}|z_j|^2)\leq 1
\end{align*}
hence $F_{A_q}(z)\in\BB_{1,2}$. Moreover, in the same way we find $F_{A_n}(z)\in\overline{\BB}_{1,2}$.

Now if $A=A_{j_1}\cdots A_{j_k}$ is Kato, then there exists $1\leq s\leq k$ with $j_s\neq n$. Hence, applying inductively the result for elementary matrices, we find $F_A(z)\in\BB_{1,2}$.
 \end{proof}

\begin{rmk}
As was noted above, the line $d=\{z_1=\ldots=z_{n-1}=0\}\subset\CC^n$ is fixed pointwise by $F_{A_n}$. Hence, for $A=A_n^p$, $p\geq 1$, there exists no open set $D\subset\CC^n$ containing $0$  with $F_A(\overline D)\subset D$.
\end{rmk}

For $n=2$, Kato matrices are known to have a unique factorisation into elementary matrices. Our next goal is to show that this still holds for $n\geq 3$. To this aim, we will first introduce some notation.

We start by defining a relation on elements of $\ZZ^n$. For two vectors $v,w\in\ZZ^n$, we say that $v\prec w$ if either $v=e_i$, $w=e_j$ and $i<j$, or $v_i\leq w_i$ for all $1\leq i\leq n$, with at least one inequality being strict. Note that this relation is not transitive. For $A=(a_{ij})_{ij}\in\GL(n,\ZZ)$, denote by $A^{(j)}=(a_{1j}, \ldots, a_{nj})^t$ the $j$-th column of $A$, for $1\leq j\leq n$. 

\begin{lem}\label{ordinecoloane}  Let $A$ be a product of elementary matrices defined in \eqref{elmatrices}. Then:
\begin{equation}\label{proprO} 
A^{(i)}\prec A^{(j)} \ \ \text{ for any }1\leq i<j\leq n.
\end{equation}
\end{lem}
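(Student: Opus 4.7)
The plan is to induct on the number $k$ of elementary factors in a presentation $A=A_{j_1}\cdots A_{j_k}$. The base case $k=0$, where $A=I_n$, is immediate since its columns are $e_1\prec e_2\prec\cdots\prec e_n$ by the first clause of $\prec$. For the inductive step, I would write $A=A'\cdot A_{j_k}$, where $A'$ is a product of $k-1$ elementary matrices satisfying \eqref{proprO}, and use the explicit shape of the elementary matrix $A_{j_k}$ (whose columns are $e_1,\ldots,e_{j_k-1},e_{j_k+1},\ldots,e_n,c$) to record
\begin{equation*}
A^{(l)}={A'}^{(l)} \text{ for } l<j_k,\quad A^{(l)}={A'}^{(l+1)} \text{ for } j_k\leq l\leq n-1,\quad A^{(n)}=\sum_{l=1}^n {A'}^{(l)}.
\end{equation*}

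With this description in hand, pairs $1\leq i<l\leq n-1$ are handled at once: the columns $A^{(i)},A^{(l)}$ equal columns of $A'$ at strictly increasing positions, so the inductive hypothesis immediately gives $A^{(i)}\prec A^{(l)}$. The remaining pairs $i<n=l$ require a slightly different argument. Since $A'\in\GL(n,\ZZ)$ is a product of matrices with entries in $\NN$, it has non-negative integer entries and all its columns are non-zero; consequently $A^{(n)}-A^{(i)}$ is the sum of the columns of $A'$ omitting one index, hence a non-zero vector with non-negative entries. This yields $A^{(i)}\leq A^{(n)}$ componentwise with at least one strict inequality. Moreover, each row of $A'$ being non-zero forces every row-sum to be $\geq 1$, so every entry of $A^{(n)}=A'c$ is $\geq 1$; in particular $A^{(n)}$ is not a standard basis vector, so the first clause of $\prec$ is not triggered and the second clause delivers what we want.

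The main subtlety to watch out for is precisely the one the authors flag: $\prec$ is \emph{not} transitive, so one cannot reduce to consecutive indices and must verify $A^{(i)}\prec A^{(l)}$ for every pair $i<l$ directly. The column description above handles all pairs at once through the full inductive hypothesis on $A'$, so there is no obstacle here—only book-keeping. The one genuinely delicate point is showing that $A^{(n)}$ is not itself a standard basis vector, since otherwise one would be forced back into the first clause of $\prec$ and the componentwise argument would be insufficient; this is ensured precisely by the invertibility of $A'$, which (together with non-negativity) forces strictly positive row-sums.
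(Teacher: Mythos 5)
Your proof is correct and follows essentially the same route as the paper: induct on the number of elementary factors, use the column description of $A'\cdot A_{j_k}$ (the paper's identity \eqref{productK}), and check the two kinds of column pairs. You merely spell out the case analysis that the paper leaves as ``clearly,'' and your extra worry about $A^{(n)}$ being a standard basis vector is harmless but unnecessary, since $\prec$ is defined as a disjunction and the componentwise clause already suffices.
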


\begin{proof}
It is clear by \eqref{elmatrices} that the elementary matrices satisfy \eqref{proprO}. Then one notices that for $B\in\GL(n,\ZZ)$ and $1\leq j\leq n$, one has:
\begin{equation}\label{productK}
B\cdot A_j=\begin{pmatrix} B^{(1)} & \cdots & B^{(j-1)} & B^{(j+1)} &\cdots & B^{(n)} & \sum_{i=1}^n B^{(i)} \end{pmatrix}.
\end{equation}
Thus, if $B$ is a product of elementary matrices and satisfies \eqref{proprO}, then clearly also $B\cdot A_j$ satisfies this property, which concludes the proof.
\end{proof}

\begin{lem}\label{unicFact} If $A=A_{j_1} \cdot \ldots \cdot A_{j_k}=A_{i_1} \cdot \ldots \cdot A_{i_p}\in\GL(n,\ZZ)$, then $p=k$ and $j_s=i_s$, for any $s \in \{1, \ldots, k\}$. 
\end{lem}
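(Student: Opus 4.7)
The approach is to extract the leftmost factor $A_{j_1}$ directly from the matrix $A$ via a row-sum invariant, after which induction on $k$ finishes the job. Write $R_p(B):=\sum_{l=1}^n B_{pl}$ for the sum of the $p$-th row of $B$. Since every elementary matrix has non-negative integer entries with each row nonzero, the same is true of any product $C=A_{l_1}\cdots A_{l_m}$, and inductively one obtains $R_q(C)\geq 1$ for all $1\leq q\leq n$. Indeed, for non-negative $B$ and $C$ with $R_q(C)\geq 1$ one has $R_p(BC)=\sum_q B_{pq}R_q(C)\geq R_p(B)$, so the lower bound propagates through multiplication starting from the elementary matrices.

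Next, set $C:=A_{j_2}\cdots A_{j_k}$ (so $C=I$ if $k=1$) and compute row $p$ of $A=A_{j_1}C$ using the explicit rows of $A_{j_1}$: a direct inspection of \eqref{elmatrices} shows that the $j_1$-th row of $A_{j_1}$ is $e_n^t$, while for $p\neq j_1$ the $p$-th row of $A_{j_1}$ has the form $e_{q(p)}^t+e_n^t$, where $q(p)=p\in\{1,\ldots,n-1\}$ if $p<j_1$ and $q(p)=p-1\in\{1,\ldots,n-1\}$ if $p>j_1$. Consequently
\begin{equation*}
R_{j_1}(A)=R_n(C),\qquad R_p(A)=R_{q(p)}(C)+R_n(C) \ \text{ for } p\neq j_1.
\end{equation*}
By the first step $R_{q(p)}(C)\geq 1$, so row $j_1$ of $A$ is the \emph{unique} row of strictly minimal row sum. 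In particular $j_1$ is determined by the matrix $A$ alone, independently of the chosen factorisation.

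With this invariant in hand, the lemma follows by induction on $k$. If $A=A_{j_1}\cdots A_{j_k}=A_{i_1}\cdots A_{i_p}$ with $k\geq 1$, the argument above applied to both factorisations forces $j_1=i_1$; multiplying on the left by $A_{j_1}^{-1}$ yields the strictly shorter equality $A_{j_2}\cdots A_{j_k}=A_{i_2}\cdots A_{i_p}$, to which the induction hypothesis applies. The vacuous base case $k=0$ (so $A=I$) is also handled by the same invariant: a putative factorisation $I=A_{i_1}\cdots A_{i_p}$ with $p\geq 1$ would force one row of $I$ to have strictly smaller sum than the others, contradicting the fact that every row of $I$ has row sum equal to $1$; hence $p=k=0$ in that case.

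The main point to identify is an invariant of $A$ that pins down $j_1$; once row sums are recognised as such, the remainder is routine bookkeeping using the transparent row structure of the elementary matrices $A_j$. I do not foresee a serious obstacle, because the non-negativity and invertibility of products of elementary matrices immediately give the required lower bound $R_q(C)\geq 1$ that drives the strict inequality $R_p(A)>R_{j_1}(A)$ for $p\neq j_1$.
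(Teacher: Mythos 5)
Your proof is correct, but it runs in the opposite direction from the paper's. The paper peels off the \emph{rightmost} elementary factor: writing $A=B\cdot A_j$, it observes that the columns of $B$ are forced to be $A^{(1)},\ldots,A^{(n-1)}$ together with $A^{(n)}-\sum_{i=1}^{n-1}A^{(i)}$, and then invokes Lemma~\ref{ordinecoloane} (the column-ordering relation $\prec$) to pin down the order of these columns and hence $B$ and $j$. You instead peel off the \emph{leftmost} factor via a row-sum invariant: since every product of elementary matrices has all row sums at least $1$, and row $p$ of $A_{j_1}C$ equals row $n$ of $C$ when $p=j_1$ and row $q(p)$ of $C$ plus row $n$ of $C$ otherwise, the index $j_1$ is recovered as the unique row of $A$ with strictly minimal row sum. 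Both arguments are complete; yours has the advantage of being entirely elementary and of sidestepping the non-transitive relation $\prec$ and the implicit claim that it orders the recovered columns uniquely, while the paper's version has the side benefit of exhibiting the column structure of $B$ explicitly, which it reuses elsewhere. Your handling of the degenerate case (no factorisation of the identity of positive length, since all its row sums are equal) correctly closes the induction when the two factorisations have different lengths.
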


\begin{proof} 
We will give an algorithm of factorisation of $A$ into elementary matrices which will show that the factorisation is unique.

Let us write $A=B\cdot A_j$, where $B$ is a product of elementary matrices and $1\leq j\leq n$. We note that by \eqref{productK}, the columns of $A$ completely determine those of $B$ as being $A^{(1)},\ldots, A^{(n-1)}$, $C:=A^{(n)}-\sum_{i=1}^{n-1}A^{(i)}$, not necessarily in this order. Moreover, by Lemma~\ref{ordinecoloane}, there exists only one possible order of these columns in $B$, hence $B$ is uniquely determined by $A$.  Finally, $j$ clearly satisfies $A^{(j-1)}\prec C\prec A^{(j)}$, hence it is also uniquely determined by $A$. Thus the decomposition $A=B\cdot A_j$ is unique for $A$, allowing us to uniquely factorise $A$ into elementary matrices.  
\end{proof}

Next, since we have:
\begin{equation*}
\begin{pmatrix}
  I_l & *  \\
  0 & B
  \end{pmatrix}
  \begin{pmatrix}
  I_l & * \\
  0 & D
 \end{pmatrix}
 = \begin{pmatrix}
  I_l & * \\
  0 & BD
 \end{pmatrix}
\end{equation*}
for any Kato matrix $A$ there exists a maximum number $l=l(A)\geq 0$ such that $A$ is of the form:
\begin{equation}
 \begin{pmatrix}
  I_l & G \\
  0 & B
 \end{pmatrix},
 \end{equation}
 where $B\in\GL(n-l,\ZZ)$ is a Kato matrix. Note that, if $A=A_{j_1}\cdots A_{j_k}$, then
 \begin{equation}
l(A)=\min_{1\leq p\leq k}j_p-1.
\end{equation}
In particular, since $A$ is a Kato matrix, it follows that $l\leq n-2$. We will say that $A$ is \textit{of type $l$}; in this case, $A^m$ is also of type $l$, for any $m>0$.

\begin{rmk}\label{eigenv1}
Any type $l$ Kato matrix with $l \geq 1$ has $1$ as eigenvalue of geometric multiplicity at least $l$.
\end{rmk}

\begin{lem}\label{LineL}
Any type $l$ Kato matrix $A\in\GL(n,\ZZ)$ is of the form $A=\begin{tiny}
\begin{pmatrix}
  I_l & G \\
  0 & B
 \end{pmatrix}
\end{tiny}$, such that all the lines of $G$ are equal to $L=(p_{l+1},\ldots, p_n)\in\NN^{n-l}$. In particular, if we denote by $z=(z_1,\ldots, z_l)$ the first $l$ coordinates on $\CC^n$ and  by $w=(w_{l+1},\ldots, w_n)$ the last $n-l$ coordinates, then we have:
\begin{equation}\label{standardform}
F_A(z,w)=(z_1w^L,\ldots, z_lw^L, F_B(w)).
\end{equation}
where $w^L=w_{l+1}^{p_{l+1}}\cdots w_n^{p_n}$. 
 \end{lem}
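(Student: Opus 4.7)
The plan is to prove the block structure claim by induction on the number $k$ of elementary factors in a fixed factorisation $A=A_{j_1}\cdots A_{j_k}$. Since $A$ is of type $l$, the identity $l(A)=\min_p j_p -1=l$ forces every factor to satisfy $j_p\geq l+1$. In particular, each $A_{j_p}$ individually admits a block decomposition $\begin{pmatrix} I_l & G_p \\ 0 & B_p\end{pmatrix}$ with $B_p\in\GL(n-l,\ZZ)$. A direct inspection of the definition \eqref{elmatrices} shows that for $j\geq l+1$ and any $i\leq l$, the $i$-th row of $A_j$ is $(e_i^t,0,\ldots,0,1)$, so the last $n-l$ entries form the row $(0,\ldots,0,1)$, independently of $i$. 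Thus each elementary factor is already of the claimed row-uniform form.

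The induction rests on the observation that this row-uniform property is stable under multiplication. For any two block matrices $M_s=\begin{pmatrix} I_l & G_s\\0 & B_s\end{pmatrix}$, $s=1,2$, one computes
\begin{equation*}
M_1M_2=\begin{pmatrix} I_l & G_2+G_1B_2\\ 0 & B_1B_2\end{pmatrix}.
\end{equation*}
If all rows of $G_s$ coincide with a single row $L_s\in\NN^{n-l}$, then every row of $G_1B_2$ equals $L_1B_2$, so every row of $G_2+G_1B_2$ equals $L_2+L_1B_2$. Applying this step by step to $A=A_{j_1}\cdots A_{j_k}$ gives a common row $L=(p_{l+1},\ldots,p_n)\in\NN^{n-l}$ for the upper-right block $G$ of $A$, as required.

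Finally, the germ identity \eqref{standardform} is a direct translation of the block decomposition through the definition of $F_A$. Splitting the coordinates as $(z,w)\in\CC^l\times\CC^{n-l}$, the $i$-th component of $F_A(z,w)$ is $\prod_{j=1}^l z_j^{a_{ij}}\prod_{j=l+1}^n w_{j}^{a_{ij}}$. For $i\leq l$, the identity block reduces the first product to $z_i$ and the row-uniform block gives $w^L$ for the second, yielding $z_iw^L$. For $i\geq l+1$, the vanishing lower-left block kills the $z$-factors and the remaining product is precisely the $(i-l)$-th component of $F_B(w)$.

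The only delicate point is bookkeeping between rows and columns when multiplying the block matrices, together with the consistent indexing of the elementary factors; the algebraic content is entirely captured by the multiplication formula above, so no substantial obstacle is expected.
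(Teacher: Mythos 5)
Your proof is correct and follows essentially the same route as the paper: induction on the number of elementary factors, the observation that each $A_j$ with $j\geq l+1$ has all rows of its upper-right block equal to $(0,\ldots,0,1)$, and the block multiplication formula giving $G_2+G_1B_2$ for the new upper-right block (the paper writes this as $G=G'+G''B'$ after peeling off the leftmost factor). The translation into the germ identity \eqref{standardform} is likewise the same direct computation.
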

 \begin{proof}
 Let $A=A_{j_1}\cdots A_{j_k}$, with $j_s>l$ for any $s\in\{1,\ldots ,k\}$. We will show by induction on $k$ that all the lines of $G$ are equal. For $k=1$, we have by definition (cf. \eqref{elmatrices}) that all the lines of $G$ equal $L_0:=(0,\ldots, 0,1)$. Now suppose that $A'=A_{j_2}\cdots A_{j_k}=\begin{tiny}
\begin{pmatrix}
  I_l & G' \\
  0 & B'
 \end{pmatrix}
\end{tiny}$ and all the lines in $G'$ are equal to $L'$, and let us moreover write $A_{j_1}=\begin{tiny}
\begin{pmatrix}
  I_l & G'' \\
  0 & B''
 \end{pmatrix}
\end{tiny}$, where all the lines in $G''$ equal $L_0$. Then we find:
\begin{equation*} 
G=G'+G''B'
\end{equation*}
hence any line of $G$ equals $L'+$ the last line of $B'$, and the conclusion follows.  \end{proof}

\begin{lem}\label{vectprJ}
Let $A=\begin{tiny}
\begin{pmatrix}
  I_l & G \\
  0 & B
 \end{pmatrix}
\end{tiny}\in\GL(n,\ZZ)$ be a type $l>0$ Kato matrix and let $L$ denote a line of $G$. Then $J_0=(1,\ldots, 1)\in\ZZ^{n-l}$ satisfies 
\begin{equation}\label{J_0l}
J_0B-J_0=(n-l-1)L.
\end{equation}
In particular, if for any $M\in\GL(j,\ZZ)$, $l_M:\ZZ^j\rightarrow \ZZ^j$ denotes the map $v\mapsto vM$ and $K_M:=\ker(l_M-\id)$, then we find that
\begin{equation*}
\Theta:=\{(I,J)\in\ZZ^l\times\ZZ^{n-l},\exists a\in\ZZ\ \exists K\in K_B \ \sum_{p=1}^lI_p=a(n-l-1), J=K-aJ_0\}
\end{equation*}
is of finite index in $K_A$ and $\rank K_B=\rank K_A-l$.
\end{lem}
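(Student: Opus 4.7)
The argument splits into three steps: establishing the identity $J_0 B - J_0 = (n-l-1)L$, verifying $\Theta\subseteq K_A$, and matching ranks.

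For the identity, I would induct on the number of factors in a decomposition $A=A_{j_1}\cdots A_{j_k}$ with every $j_s>l$ (which exists by the definition of ``type $l$''), mirroring the structure of the proof of Lemma~\ref{LineL}. In the base case $k=1$, the bottom-right $(n-l)\times(n-l)$ block of a single $A_j$ ($j>l$) has its first $n-l-1$ columns equal to standard basis vectors and its last column equal to the all-ones vector, so its row sums form the vector $(1,\ldots,1,n-l)$; since $L=(0,\ldots,0,1)$ in this case, one obtains $J_0 B - J_0=(n-l-1)L$. For the inductive step, writing $A=A_{j_1}A'$ with corresponding bottom-right blocks $B''$, $B'$ and top-right lines $L_0=(0,\ldots,0,1)$, $L'$, the identity $B=B''B'$ holds and, exactly as in the proof of Lemma~\ref{LineL}, $L=L'+L_0 B'$; then
\begin{equation*}
J_0 B - J_0 = (J_0 B''-J_0)B' + (J_0 B'-J_0) = (n-l-1)(L_0 B'+L') = (n-l-1)L.
\end{equation*}

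For $\Theta\subseteq K_A$, I would rewrite $vA=v$ for $v=(I,J)\in\ZZ^l\times\ZZ^{n-l}$ as $J(B-I_{n-l})=-IG$; since every row of $G$ equals $L$ by Lemma~\ref{LineL}, one has $IG=(\sum_p I_p)L$, so $K_A$ is cut out by the single relation $J(B-I_{n-l})=-(\sum_p I_p)L$. For $(I,J)\in\Theta$ the decomposition $J=K-aJ_0$ together with $KB=K$ and the first identity gives $J(B-I_{n-l})=-a(J_0B-J_0)=-a(n-l-1)L=-(\sum_p I_p)L$, settling the inclusion.

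To conclude the finite-index and rank statements, I would realize $K_A$ as the kernel of the $\ZZ$-linear map $\phi\colon\ZZ^l\times\ZZ^{n-l}\to\ZZ^{n-l}$, $\phi(I,J)=J(B-I_{n-l})+(\sum_p I_p)L$. Its image is $\im(l_B-\id)+\ZZ L$, and the first identity together with the inequality $n-l-1\geq 1$ (valid because any Kato matrix has $l\leq n-2$) shows that $L$ lies in $\im(l_B-\id)\otimes\QQ$, so $\rank \im\phi=(n-l)-\rank K_B$. Rank-nullity then gives $\rank K_A=l+\rank K_B$. On the other hand $\Theta$ is parametrised by pairs $(I,K)\in\ZZ^l\times K_B$ subject only to $(n-l-1)\mid\sum_p I_p$, a finite-index condition on $\ZZ^l$ (again because $n-l-1\geq 1$), so $\rank\Theta=l+\rank K_B=\rank K_A$ and $\Theta$ is of finite index in $K_A$. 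The one delicate point worth double-checking is precisely the inequality $n-l-1\geq 1$: without it the $\ZZ L$ term could raise $\rank\im\phi$ and the divisibility condition on $\sum_p I_p$ would degenerate, but the Kato-matrix assumption rules this out.
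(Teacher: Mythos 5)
Your proof is correct and follows essentially the same route as the paper: the identity $J_0B-J_0=(n-l-1)L$ by induction on the elementary factorization (base case from the column sums of $B_j$, inductive step via $B=B''B'$ and $L=L'+L_0B'$), the inclusion $\Theta\subseteq K_A$ by direct substitution, and a rank count for the finite-index claim. The only cosmetic difference is the last step: the paper works with the right kernel $\{v:\ Av^t=v^t\}=\QQ^l\oplus(\ker(v\mapsto v^t-Bv^t)\cap\{Lv^t=0\})$ and squeezes a defect $\delta\in\{0,1\}$ to zero using $\Theta\subseteq K_A$, whereas you compute $\rank K_A$ directly by rank--nullity after noting that $(n-l-1)L=(l_B-\id)(J_0)$ places $L$ in $\im(l_B-\id)\otimes\QQ$ --- a dual formulation of the same fact, correctly hinging on $n-l-1\geq 1$.
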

\begin{proof}
First, we will show that eq. \eqref{J_0l} holds for any product of the form:
\begin{equation}\label{prodl} 
A=A_{j_1}\cdots A_{j_k}=\begin{tiny}
\begin{pmatrix}
  I_l & G \\
  0 & B
 \end{pmatrix}
\end{tiny}, \ \min_{1\leq p\leq k}j_p>l
\end{equation} 
by induction on $k\geq 1$. For $k=1$, $A=A_j$ with $j>l$, $L=L_0=(0,\ldots, 0,1)$ and
\begin{equation}\label{Bj}
B=B_j:=\begin{pmatrix}
e_{l+1} & \cdots & e_{l+j-1} & e_{l+j+1} & \cdots & e_n & c
\end{pmatrix}
\end{equation}
where $e_{l+1}\ldots e_{n}$ denotes the standard basis of $\RR^{n-l}$ and $c=\sum_{s=l+1}^{n}e_s$, thus we find \eqref{J_0l}. 

Suppose now that we have $A=A_jA'$, $j>l$, where $A'=\begin{tiny}
\begin{pmatrix}
  I_l & G' \\
  0 & B'
 \end{pmatrix}
\end{tiny}$ is a product as in \eqref{prodl}, with $L'$ denoting a line of the matrix $G'$, and suppose that \eqref{J_0l} is satisfied for $A'$. It follows that $B=B_jB'$ and $L=L'+B'_{(n)}$, where $B'_{(n)}$ denotes the last line of $B'$. Thus we find, using \eqref{Bj} and the induction hypothesis:
\begin{align*}
J_0B=(J_0+(n-l-1)L_0)B'=J_0+(n-l-1)L'+(n-l-1)B'_{(n)}=J_0+(n-l-1)L
\end{align*}
which is precisely what we wanted to show.

Next, using Lemma~\ref{LineL}, we find that $(I,J)\in\ZZ^l\times\ZZ^{n-l}$ is in $K_A$ if and only if $(\sum_{p=1}^l I_p)L+JB=J$. Thus we infer by \eqref{J_0l} that we have $\Theta\subset K_A$. Moreover: 
\begin{equation}\label{dimTheta}
\dim \Theta\otimes\QQ=l+\dim K_B\otimes\QQ\leq \dim K_A\otimes\QQ.
\end{equation}

On the other hand, we also have $\dim K_A\otimes\QQ=\dim\ker p_A$, where for any $M\in\GL(j,\ZZ)$ we put $p_M:\QQ^j\rightarrow \QQ^j$, $v\mapsto v^t-Mv^t$. But $\ker p_A=\QQ^l\oplus(\ker p_B\cap \{v\in\QQ^{n-l}, Lv^t=0\})$, so:
\begin{equation}\label{dimP_B} 
\dim K_B\otimes\QQ=\dim\ker p_B=\dim\ker p_A-l+\delta,
\end{equation} 
with $\delta\in\{0,1\}$, depending on whether or not $\ker p_B$ is included in the hyperplane $\{v\in\QQ^{n-l}, Lv^t=0\}$. Combining \eqref{dimTheta} with \eqref{dimP_B}, we find $\delta=0$ and $\dim\Theta\otimes\QQ=\dim K_A\otimes\QQ$. In particular, $\Theta$ is of finite index in $K_A$.
\end{proof}

\begin{rmk}\label{vectprJn-2}
If $l=n-2$, then we find in the above lemma that $J_0B-J_0=L$. Moreover, $K_B=0$ in this case. Thus we have:
\begin{equation}
\Theta=\{(I,-(\sum_{p=1}^lI_p)J_0)\in\ZZ^l\times\ZZ^{n-l}, I\in\ZZ^l\}=K_A.
\end{equation}
More generally, in the case $\rank K_B=0$ we find $\Theta=K_A$.
\end{rmk}

\begin{defi} We shall call a type $l$ Kato matrix {\it $l$-positive} if it is of the form:
\begin{equation*}
\begin{pmatrix}
  I_l & * \\
  0 & B
 \end{pmatrix}
\end{equation*}
and all the entries of $B$ are strictly positive numbers. If $l=0$, we will simply call it \textit{positive}.
\end{defi}

\begin{rmk}\label{positive} It can be easily shown that for any Kato matrix $A\in\GL(n,\ZZ)$ of type $l$, $A^{p}$ is $l$-positive for any $p\geq n-l$.
\end{rmk}
\subsection{The manifolds $M_A$} In this section, we will associate to any Kato matrix $A\in\GL(n,\ZZ)$ an $n$-dimensional Kato manifold, which we will denote by $M_A$.

Recall that the blow-up of $\CC^n$ at $0\in\CC^n$ is defined as: 
\begin{equation*}
\hat{\mathbb{C}}^n=\{((z_1,\ldots, z_n), [u_1: u_2: \ldots :u_n])  \mid z_lu_j=z_ju_l, 1\leq j,l\leq n\} \subseteq \mathbb{C}^n \times \mathbb{C}\mathbb{P}^{n-1}
\end{equation*}
and we denote by $\Pi:\hat\CC^n\rightarrow \CC^n$ the natural projection. We can cover $\hat\CC^n$ by the standard charts $f_j: \mathbb{C}^n \rightarrow \hat{\mathbb{C}}^n$,
\begin{equation*}
f_j(z)=((z_1z_n, \ldots, z_{j-1}z_n, z_n, z_{j}z_n, \ldots, z_{n-1}z_n ), [z_1: \ldots :z_{j-1}:1:z_{j}:\ldots:z_{n-1}])
\end{equation*}
for $1\leq j\leq n$. In these charts, the projection is expressed as $\Pi\circ f_j(z)=z^{A_j}$, where $A_j$ are given in \eqref{elmatrices}. As a matter of fact, this unravels the motivation for considering the matrices $A_j$. Note moreover that in any of the charts, the exceptional divisor is given by the equation $z_n=0$. 

Given a Kato matrix $A$, by Lemma~\ref{unicFact} we can uniquely write it as  $A=A_{j_1}\cdots A_{j_k}$, $k\geq 1$. We will construct a Kato data $(\pi,\sigma)$ by the composition procedure presented in Section~\ref{SecKato} of the data $(\Pi,f_{j_1}),\ldots, (\Pi,f_{j_k})$. Let $\D:=\BB_{1,2}=\{z\in\CC^n, \sum_{j=1}^{n-1}|z_j|^2+2|z_n|^2<1\}$. By Lemma~\ref{contractie12}, we have $F_{A_j}(\D)\subset \D$ for any $1\leq j\leq n$. 

Denote also by $\Pi:\hat \D\rightarrow \D$ the blow-up of $\D\subset\CC^n$ at $0$. For $1\leq j\leq n$, let $D_j:=f_{j}(\D)$ and $W_j:=\hat\D-D_j$. Note that the map 
\begin{equation*}
\gamma_j=f_j\circ\Pi|_{\del \hat\D}:\del\hat\D\rightarrow \del D_j
\end{equation*}
extends to a biholomorphism from a neighborhood of $\del\hat\D$ in $\hat\CC^n$ to a neighbourhood of $\del D_j$ in $\hat\CC^n$. We then use these maps to define, by gluing, the following complex manifolds, for $1\leq p\leq k$:
\begin{align}\label{defW_l}
\begin{split}
\hat\D^{(p)}&:=W_{j_1}\sqcup_{\gamma_{j_1}}W_{j_2}\sqcup_{\gamma_{j_2}}\ldots \sqcup_{\gamma_{j_{p-1}}}\hat \D \\ 
W^{(p)}&:=W_{j_1}\sqcup_{\gamma_{j_1}}W_{j_2}\sqcup_{\gamma_{j_2}}\ldots \sqcup_{\gamma_{j_{p-1}}} W_{j_p}.
\end{split}
\end{align}
If we denote by $\iota_p:\hat\D\rightarrow\hat\D^{(p)}$ the natural embedding and let $\sigma_p=\iota_p\circ f_{j_p}:\D\rightarrow \hat\D^{(p)}$, then $W^{(p)}=\hat\D^{(p)}-\sigma_p(\D)$. Note that, for $p\geq 1$, $\hat\D^{(p)}$ is the blow-up of $\hat\D^{(p-1)}$ at $\sigma_{p-1}(0)$, and we denote by $\pi_p:\hat\D^{(p)}\rightarrow \hat\D^{(p-1)}$ this blow-up map. Here, by convention, $\hat\D^{(0)}=\D$ and $\sigma_0=\id$. Moreover, we have a commutative diagram:
\begin{equation}\label{sigma}
\xymatrix{ &\hat\D\ar[r]^-{\iota_p} \ar[d]^-\Pi &\hat\D^{(p)}\ar[d]^-{\pi_p}\\
&\D \ar[r]^-{\sigma_{p-1}} & \hat\D^{(p-1)}
}
\end{equation}

Finally, let $\pi:=\pi_1\circ\ldots\circ\pi_k:\hat\D^{(k)}\rightarrow \D$, with $E:=\pi^{-1}(0)$ denoting the exceptional divisor of $\pi$, and let $\sigma:=\sigma_k$, so that $W=\hat\D^{(k)}-\sigma(\D)=W^{(k)}$. Then we find inductively, via \eqref{sigma} and using that $\Pi\circ f_{j}=F_{A_j}$:
\begin{align*}
\pi\circ\sigma&=\pi_1\circ\ldots\circ\pi_{k-1}\circ \sigma_{k-1}\circ F_{A_{j_k}}\\
&=F_{A_{j_1}}\circ\ldots \circ F_{A_{j_k}}\\
&=F_A. 
\end{align*} 

By Lemma~\ref{contractie12}, we find $\sigma(\overline\D)\subset\hat\D^{(k)}$. Therefore, the data $\pi:\hat\D^{(k)}\rightarrow\D$ and $\sigma:\overline{\D}\rightarrow \hat\D^{(k)}$ defines a Kato data and the manifold:
\begin{equation*}
M_A:=M(\pi,\sigma)
\end{equation*}
is a Kato manifold.

\begin{rmk}
If $A\neq A_qA_n^p$ with $1\leq q\leq n-1$, $p\geq 0$, then by Lemma~\ref{contractie} we can take $D:=\BB$ instead of $\BB_{1,2}$ in the above construction of the Kato data. In this way, the resulting data $(\pi,\sigma)$ is a Kato data in the classical sense, meaning that $\pi$ is a modification of the ball $\BB$. 
\end{rmk}

Lemma~\ref{ciclic} immediately translates to:
\begin{lem} Let $A=A_{j_1}\cdots A_{j_{k-1}}A_{j_k}$ be a Kato matrix and let $A^c=A_{j_k}A_{j_1}\cdots A_{j_{k-1}}$ be the Kato matrix obtained by a cyclic permutation of the elementary factors of $A$. Then $M_A$ is biholomorphic to $M_{A^c}$.   
\end{lem}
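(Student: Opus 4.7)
The plan is to reduce the claim to a single application of Lemma~\ref{ciclic}. By Lemma~\ref{unicFact} the factorization $A = A_{j_1} \cdots A_{j_k}$ is unique, and the paper's construction of $M_A$ builds its Kato data by iterating the composition of the elementary Kato data $(\Pi, f_{j_1}), \ldots, (\Pi, f_{j_k})$ in this prescribed order, in the sense of the ``Composing Kato data'' subsection. The key structural fact I would then invoke is that this iterated composition is associative up to canonical identifications: this is transparent from \eqref{univc}, since the universal cover is a $\mathbb{Z}$-periodic chain of copies of the pieces $W_{j_s}$ glued along the maps $\gamma_{j_s}$, with the deck-group generator acting as a shift of period $k$.

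Next I would group the factorization as $A = B \cdot A_{j_k}$, where $B := A_{j_1} \cdots A_{j_{k-1}}$, and let $(\pi_B, \sigma_B)$ denote the Kato data obtained by applying the paper's construction to $B$ (that this is indeed a Kato data in the paper's generalized sense follows from Lemma~\ref{contractie12} when $B$ is itself a Kato matrix, and from the fact that $F_{A_n}$ maps $\overline{\mathbb{B}}_{1,2}$ into itself in the remaining edge case $B = A_n^{k-1}$, since its composition with the strictly contracting $F_{A_{j_k}}$ still contracts the ball strictly). By the associativity observation, the Kato data defining $M_A$ is canonically identified with the composition of $(\pi_B, \sigma_B)$ with $(\Pi, f_{j_k})$, whose germ is $F_B \circ F_{A_{j_k}} = F_A$; symmetrically, the Kato data defining $M_{A^c}$ is the composition of $(\Pi, f_{j_k})$ with $(\pi_B, \sigma_B)$, with germ $F_{A_{j_k}} \circ F_B = F_{A^c}$.

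Finally, applying Lemma~\ref{ciclic} to these two Kato data gives the biholomorphism $M_A \cong M_{A^c}$. I expect the only non-formal step to be the verification of the associativity/regrouping of the iterated composition, which is a bookkeeping exercise once one matches the fundamental domain $W^{(k)}$ built in \eqref{defW_l} with the fundamental domain produced by composing $(\pi_B, \sigma_B)$ with $(\Pi, f_{j_k})$; everything else is a direct translation of the proof of Lemma~\ref{ciclic}, in which $W_A$ and $W_{A^c}$ are simply two different fundamental domains for the same $\mathbb{Z}$-action on the same universal cover.
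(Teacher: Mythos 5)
Your argument is correct and follows essentially the same route as the paper, which offers no separate proof but simply observes that Lemma~\ref{ciclic} ``immediately translates'' to this statement via exactly the grouping $A=(A_{j_1}\cdots A_{j_{k-1}})\cdot A_{j_k}$ that you make explicit. One small imprecision: in the edge case $B=A_n^{k-1}$ the pair $(\pi_B,\sigma_B)$ is \emph{not} a Kato data even in the paper's generalized sense (the remark following Lemma~\ref{contractie12} notes that no domain $D$ with $F_{A_n^p}(\overline D)\subset D$ exists), but this does not affect your conclusion, since the proof of Lemma~\ref{ciclic} only requires that the two full compositions --- whose germs $F_A$ and $F_{A^c}$ are genuinely contracting because $A$ and $A^c$ are both Kato matrices --- share the same universal cover, with $W^{(k)}$ and its cyclic regrouping serving as two fundamental domains for the same deck action.
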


Similarly, Lemma~\ref{finitecov} reads:
\begin{lem}\label{cov}
Let $A$ be a Kato matrix and let $p\geq 2$. Then $M_{A^p}$ is a finite cyclic unramified covering of $M_A$ with $p$ sheets. 
\end{lem}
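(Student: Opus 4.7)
The plan is to reduce the statement to the general Lemma~\ref{finitecov}, which says that iterating the composition of a Kato data with itself $p$ times yields a $p$-sheeted cyclic unramified cover. What must be verified is that the Kato data constructed from the matrix $A^p$ coincides (up to the biholomorphism class of the resulting manifold) with the $p$-fold composition of the Kato data constructed from $A$.

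First I would unpack the definitions. If $A = A_{j_1}\cdots A_{j_k}$ is the factorization into elementary matrices, then by construction $(\pi,\sigma)$ for $M_A$ is obtained by iteratively composing the elementary Kato data $(\Pi,f_{j_1}),\ldots,(\Pi,f_{j_k})$ via the gluing procedure \eqref{defW_l}. By Lemma~\ref{unicFact}, the unique factorization of $A^p$ into elementary matrices is the $p$-fold concatenation
\begin{equation*}
A^p = \underbrace{A_{j_1}\cdots A_{j_k}\;\cdots\;A_{j_1}\cdots A_{j_k}}_{p \text{ times}},
\end{equation*}
so the Kato data $(\pi_{(p)},\sigma_{(p)})$ defining $M_{A^p}$ is obtained by composing $pk$ elementary Kato data in the same order.

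Next I would invoke the (essentially formal) associativity of the composition operation defined in Section~\ref{SecKato}: the gluing construction \eqref{glue} only depends on the underlying charts and transition maps, so iterated gluings can be regrouped without changing the resulting complex manifold or the resulting modification/embedding pair. Regrouping the $pk$ elementary factors into $p$ blocks of $k$, one sees that $(\pi_{(p)},\sigma_{(p)})$ is the composition of $(\pi,\sigma)$ with itself $p$ times. Equivalently, the corresponding germ satisfies $F_{A^p}=F_A^{\,p}$ by the identity $F_{AB}=F_A\circ F_B$ noted at the beginning of Section~\ref{SecIH}, which matches the germ of the $p$-fold self-composition by the computation in Section~\ref{SecKato}.

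Once this identification is made, Lemma~\ref{finitecov} applies verbatim and yields that $M_{A^p}$ is a Galois unramified covering of $M_A$ with deck group $\pi_1(M_A)/\pi_1(M_{A^p}) \cong \ZZ/p\ZZ$, so in particular cyclic with $p$ sheets. The only nontrivial point in the argument is the bookkeeping that makes the associativity of the composition precise; everything else is a direct translation from the matrix level to the level of Kato data via Lemma~\ref{unicFact} and the formula $F_{AB}=F_A\circ F_B$.
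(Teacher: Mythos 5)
Your proposal is correct and follows the same route as the paper, which presents this lemma as an immediate translation of Lemma~\ref{finitecov} to the manifolds $M_A$. The only difference is that you spell out the identification of the Kato data for $A^p$ with the $p$-fold self-composition of the data for $A$ (via Lemma~\ref{unicFact} and the associativity of the gluing), a step the paper leaves implicit.
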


\begin{rmk}\label{subvarKato}
Let us note that if $A=\begin{tiny}
\begin{pmatrix}
  I_l & G \\
  0 & B
 \end{pmatrix}
\end{tiny}$ is a type $l$ Kato matrix, such that $F_A(z,w)=(z_1w^L,\ldots, z_lw^L, F_B(w))$, $L\in\NN^{n-l}$, then $M_A$ contains proper Kato submanifolds. Indeed, for any $1\leq r\leq l$ and $1\leq j_1<\ldots<j_r\leq l$, the $r$-codimensional submanifold $D':=\{z_{j_1}=0\}\cap\ldots \cap\{z_{j_r}=0\}\cap\D$ in $\D$ is invariant under $F_A$, thus $\pi^{-1}(D')\cap W$ is $\gamma$-invariant and defines an $r$-codimensional submanifold of $M_A$. Moreover, this manifold is biholomorphic to the Kato manifold $M_{A'}$, where $A'=\begin{tiny}
\begin{pmatrix}
  I_{l-r} & G' \\
  0 & B
 \end{pmatrix}
\end{tiny}\in\GL(n-r,\ZZ)$ and the matrix $G'$ is the matrix having $l-r$ lines, all equal to $L$. 
\end{rmk}

Let $\tilde{C}$ be the string of divisors in $\tilde{M}_A$ obtained by gluing the corresponding traces on $W_m$ of the exceptional divisor $E$ of $\hat\D^{(k)}$. As in the proof of \cite[Proposition~1.11, Part~I]{dl}, it can be seen that $\tilde C$ is a formal sum of all the immersed irreducible compact hypersurfaces in $\tilde M_A$.  Furthermore, we denote by $C=q(\tilde C)$ the corresponding cycle of divisors on $M$. Then $C$ has $k$ irreducible components $C_1,\ldots, C_k$, each coming from an irreducible component of $E$. Moreover, for each $1\leq j\leq k$, $C_j$ is a proper modification of $\CC\PP^{n-1}$ at finitely many points. In particular, $C_j$ is a rational manifold, in the sense that it is birational to $\CC\PP^{n-1}$.

\begin{lem}\label{Hj} Let $A=A_{j_1}\cdots A_{j_k}$ be a Kato matrix of type $l$, let $(\pi,\sigma)$ be the Kato data that it determines and let $H=\{z_{l+1}\cdots z_n=0\}\subset\D$. Then $\pi^{-1}(H)\cap W=C\cap W$, hence the divisor in $M_A$ induced by gluing $\pi^*H$ is precisely $C$.  Moreover, if $A$ is $l$-positive, then the strict transforms $H_j'$ of $H_j=\{z_j=0\}\subset\D$ via $\pi$, for $l+1\leq j\leq n$, determine $n-l$ distinct irreducible components $C_{p_{l+1}},\ldots, C_{p_{n}}$ of $C$. In particular, $k\geq n-l$.
\end{lem}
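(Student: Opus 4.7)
The proof rests on the invariance $F_A^{-1}(H) = H$ in $\D$. By Lemma~\ref{LineL}, $F_A(z,w) = (z_1 w^L, \ldots, z_l w^L, F_B(w))$; since $B$ is an invertible product of elementary matrices, every column of $B$ contains a positive entry, so each $F_B(w)_r = \prod_s w_s^{B_{rs}}$ vanishes iff some $w_s$ vanishes, which is precisely the condition $(z,w) \in H$. Using $\pi\sigma = F_A$, this gives $\sigma(\D) \cap \pi^{-1}(H) = \sigma(H)$, and the gluing $\gamma = \sigma\pi: \partial_+ W_m \to \partial_- W_{m+1}$ identifies $\pi^{-1}(H) \cap \partial_+ W_m$ with $\pi^{-1}(H) \cap \partial_- W_{m+1}$. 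Hence $\tilde X := \bigsqcup_m (\pi^{-1}(H) \cap W_m)/\!\sim$ forms a closed, $\pi_1(M_A)$-invariant analytic subset of $\tilde M_A$, descending to a closed analytic subset $X := q(\tilde X) \subset M_A$ of pure codimension one.

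For the first assertion, I compare $X$ with $C$. Since $M_A$ is compact, every irreducible component of $X$ is a compact irreducible hypersurface; by the characterisation of $\tilde C$ recorded earlier in this section---as a formal sum of all immersed irreducible compact hypersurfaces of $\tilde M_A$---each such component is a component of $C$, giving $X \subset C$. Conversely, $E \subset \pi^{-1}(H)$ gives $C \subset X$. Hence $X = C$, which lifts back to $\pi^{-1}(H) \cap W = C \cap W$ in $\tilde M_A$, and the divisor statement for $\pi^*H$ follows.

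For the second part, suppose $A$ is $l$-positive, so every entry of $B$ is strictly positive. Then $B_{j-l,j-l} > 0$ implies $F_A(H_j) \subset H_j$ for each $l+1 \leq j \leq n$, and the strict transform $H_j'$ meets $E$ at a specific point lying on a distinguished irreducible component $E_{p_j}$ of $E$, determined by the chain of charts $f_{j_s}$ used in constructing $\pi$. An induction on $k$ shows that $l$-positivity ensures the $n-l$ coordinate hyperplanes $H_{l+1}, \ldots, H_n$ remain separated by the successive blow-ups (at each stage the chart $f_{j_s}$, with $j_s \geq l+1$, distinguishes the $w$-directions not involved in the blown-up coordinate $z_n$), so the indices $p_{l+1}, \ldots, p_n$ are pairwise distinct. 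By the first part, the extensions of $H_j'$ under the gluing yield $n-l$ distinct components $C_{p_{l+1}}, \ldots, C_{p_n}$ of $C$, giving $k \geq n-l$.

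The main obstacle is the distinctness claim in the second part, which requires a careful chart-by-chart tracking of the hyperplane directions through the iterated blow-ups, exploiting $l$-positivity to rule out coincidences; the first part is comparatively direct given the compactness of $M_A$ and the characterisation of $\tilde C$.
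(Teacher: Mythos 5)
Your reduction to the invariance $F_A^{-1}(H)=H$ is correct (the product of the components of $F_B$ is $w$ raised to the vector of column sums of $B$, all of which are positive), and it does show that $\pi^{-1}(H)\cap W$ is compatible with the gluing and descends to a divisor $X\subset M_A$ with $C\subset X$. The gap is in the converse inclusion $X\subset C$. The compactness you invoke is compactness of the components of $X$ inside the compact manifold $M_A$, which is automatic and carries no information; the characterisation of $\tilde C$ you appeal to concerns compact hypersurfaces of the \emph{universal cover} $\tilde M_A$, and an irreducible compact hypersurface of $M_A$ need not lift to a compact hypersurface of $\tilde M_A$ (its $q$-preimage can be irreducible and non-compact, or an infinite chain). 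The inference ``compact irreducible hypersurface of $M_A$ $\Rightarrow$ component of $C$'' is in fact false within this very class: for $l>0$ the hypersurface $M_{A_{(1)}}$ of Remark~\ref{subvarKato}, induced by $\{z_1=0\}$, is a compact irreducible hypersurface of $M_A$ not contained in $C_A$ (cf.\ Remark~\ref{linEquiv}), and its preimage in $\tilde M_A$ is non-compact. What actually has to be proved is that every irreducible component of $\tilde X=q^{-1}(X)$ is compact, i.e.\ that the chains of pieces $E_p\cap W_m$, $H'_j\cap W_m$ produced by the boundary identification terminate after finitely many steps. This is the heart of the matter: one shows that for $l+1\leq j\leq n$ one has $F_A(H_j)\subset H_s$ with $s>j$ (or $F_A(H_j)=\{0\}$), so the indices of the successive strict transforms appearing in a chain are strictly monotone and bounded, and each $H'_j$ connects after at most $n-l$ gluing steps to a component of $E$. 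Your proposal omits this entirely, and without it the identification $X=C$ is not established.

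The second part also misses the relevant mechanism. A component of $C$ is not produced by $H'_j$ ``meeting $E$ at a point''; it is produced by the identification $\gamma=\sigma\circ\pi$, which glues $H'_j\cap\del_+W$ to $\sigma(H_j)\cap\del_-W$. What $l$-positivity buys is the much stronger statement $F_A(H_j)=\{0\}$ for all $l+1\leq j\leq n$ (not merely $F_A(H_j)\subset H_j$; note that $L$ is entrywise positive by Lemma~\ref{vectprJ}, since $(n-l-1)L=J_0B-J_0$). From this one gets that $\sigma(H_j)$ lies in a single irreducible component $E_{p_j}$ of $E$ and that $H'_j\cap\sigma(\D)=\emptyset$, so the component of $C$ through $H'_j$ is the closed-up pair $H'_j\sqcup_\gamma E_{p_j}$. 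Distinctness of the indices $p_j$ is then not a matter of chart-by-chart bookkeeping: it follows from the fact that for each irreducible component $F$ of $E$ meeting $\sigma(\D)$ the preimage $\sigma^{-1}(F)$ is a single coordinate hyperplane $H_j$, so $j\mapsto p_j$ is injective. As written, your ``induction on $k$'' is asserted rather than performed, and it is aimed at a statement (separation of the hyperplanes under the blow-ups) that is not the one needed.
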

\begin{proof}
Let $E=\sum_{j=1}^kE_j$ denote the exceptional divisor of $\pi$, so that $\pi^*H=E+\sum_{j=l+1}^nH'_j$.  Since $A$ is of type $l$, we have $F_A(H_j)\subset H_j$ for any $1\leq j \leq l$. Fix now $l+1\leq j\leq n$. Then we claim that  $F_A(H_j)\subset H_s$ with $s>j$. Indeed, this is clear by looking at elementary matrices: $F_{A_p}(H_j)\subset H_s$, where $s=j$ if $p\geq j+1$ and $s=j+1$ if $p\leq j$. Moreover, since $A$ is of type $l$, there exists at least one factor $A_{l+1}$ in the factorisation of $A$, hence the claim follows.
In addition, clearly $F_A(H_n)=\{0\}$. 

Let $F$ be an irreducible component of $E$ which intersects $\sigma(\D)$. Then $\sigma^{-1}(F)=H_j$ with $l+1\leq j\leq n$. Next, either $H_j'$ does not intersect $\sigma(\D)$, either it does, in which case $\sigma^{-1}(H_j')=H_{\nu_1}$. By our claim we must have $l+1\leq \nu_1<j$. Thus, we find a finite sequence of natural numbers $l+1\leq\nu_m<\ldots<\nu_1<\nu_0=j$ so that $H'_{\nu_m}\cap\sigma(\D)=\emptyset$ and $\sigma^{-1}(H'_{\nu_s})=H_{\nu_{s+1}}$ for $0\leq s\leq m-1$. It follows that $F$ determines an irreducible component of $C$ given by 
\begin{equation*}
F\sqcup_\gamma H'_{\nu_0}\sqcup_\gamma\ldots\sqcup_\gamma H'_{\nu_m}.
\end{equation*}
This shows that $C\cap W\subset \pi^* H\cap W$. 

Conversely, given any $j$ with $l+1\leq j\leq n$, our claim implies the existence of a sequence of natural numbers $j=\nu_0<\nu_1<\ldots<\nu_p\leq n$, $p\geq 0$, satisfying 
\begin{equation*}
0\neq F_A(H_{\nu_s})\subset H_{\nu_{s+1}}, \ \ 0\leq s\leq p-1, \  F_A(H_{\nu_p})=\{0\}.
\end{equation*}
But now, since $F_A=\pi\circ\sigma$, this implies that $\sigma(H_{\nu_p})\subset E_s$ for some $1\leq s\leq k$. Thus $H'_j$ glues, via $\gamma$, to $E_s$ to give a component of $C$.  This shows $\pi^*H\cap W\subset C\cap W$.
   
Finally, in the case when $A$ is $l$-positive, $F_A(H_j)=\{0\}$ for any $l+1\leq j \leq n$. This implies that $H'_j\cap\sigma(\D)=\emptyset$ and that $\sigma(H_j)\subset E_{p_j}$ for some $1\leq p_j\leq k$. Thus to each such $j$ corresponds an irreducible component of $C$ given by:
\begin{equation*}
C_{p_j}=H_j'\sqcup_\gamma E_{p_j}
\end{equation*}
and so $k\geq n-l$.
\end{proof}

In complex dimension $2$, $\tilde C$ always has two connected components, as shown in \cite[Corollary 3.29, Part~I]{dl}. Moreover, $C$ has two connected components if $M_A$ is {\it even} and only one if it is {\it odd}, by \cite{n84}.  In higher dimension, we have the following:

\begin{lem}
Let $A\in\GL(n,\ZZ)$ be a Kato matrix and suppose $n>2$. Then $\tilde C$ and $C$ are connected.
\end{lem}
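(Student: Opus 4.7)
The strategy is to work in the universal cover $\tilde M_A=\bigsqcup_{m\in\ZZ}W_m$ and to identify, via Lemma~\ref{Hj}, the trace $\tilde C\cap W_m$ with $\pi^{-1}(H)\cap W$ for $H=\{z_{l+1}\cdots z_n=0\}$. Thus $\tilde C$ is the infinite union of these copies glued consecutively along $\gamma=\sigma\circ\pi\colon\del_+W\to\del_-W$. It then suffices to prove: (i) $\pi^{-1}(H)\cap W$ is connected, and (ii) $\gamma$ sends a non-empty subset of $\pi^{-1}(H)\cap\del_+W$ into $\pi^{-1}(H)\cap\del_-W$. Together these imply $\tilde C$ is connected, whence $C=q(\tilde C)$ is connected.

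For (i), one first notes that $\pi^{-1}(H)$ is connected already in $\hat\D^{(k)}$: the exceptional divisor $E=E_1+\cdots+E_k$ is connected because each $\pi_s$ has center $\sigma_{s-1}(0)$ on the previous exceptional locus, and every strict transform $H_j'$ with $l+1\leq j\leq n$ meets $E_1$ in a $\CC\PP^{n-2}$. The hypothesis $n>2$ then enters through two facts about the Stein open subset $\sigma(\D)$. First, for each smooth compact component $E_j$ of complex dimension $n-1\geq 2$, the open subset $U_j=E_j\cap\sigma(\D)$ is Stein; by Andreotti--Frankel and Lefschetz duality one has $H^1_c(U_j)\cong H_{2n-3}(U_j)=0$ (since $2n-3>n-1$), so the excision sequence
\[
0=H^0_c(U_j)\to H^0(E_j)\to H^0(E_j\setminus U_j)\to H^1_c(U_j)=0
\]
forces $E_j\cap W$ to be connected. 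An analogous argument (or a direct path-connectedness statement using that $H_j'\setminus E\cong H_j\setminus\{0\}$ is connected for $n\geq 2$ and that $H_j'\cap E_1\cap W$ is a non-empty compact piece) gives connectedness of each $H_j'\cap W$. Second, whenever two irreducible components of $\pi^{-1}(H)$ meet in $\hat\D^{(k)}$, their intersection is a compact analytic subvariety of dimension at least $n-2\geq 1$, which cannot lie inside the Stein $\sigma(\D)$; hence pairwise intersections survive in $W$, and the incidence graph of $\pi^{-1}(H)\cap W$ is connected.

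For (ii), $\pi^{-1}(H)\cap\del_+W=\pi^{-1}(H\cap\del\D)$ is non-empty since $l\leq n-2$, and the block form $F_A(z,w)=(z_1w^L,\ldots,z_lw^L,F_B(w))$ from Lemma~\ref{LineL} yields $F_A(H)\subset H$ (because $F_B$ sends $\{w_{l+1}\cdots w_n=0\}$ into itself, each column of $B$ being non-zero). Therefore $\gamma$ maps $\pi^{-1}(H)\cap\del_+W$ into $\sigma(F_A^{-1}(H)\cap\del\D)\subset\pi^{-1}(H)\cap\del_-W$, linking the divisor on $W_m$ to the one on $W_{m+1}$ non-trivially. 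I expect the main technical difficulty to be the connectedness of $H_j'\cap W$ for the non-compact strict transforms: the clean Andreotti--Frankel/Lefschetz duality argument applies directly only to the compact $E_j$'s, and for $H_j'$ one has either to work with a suitable compactification or to provide a direct topological argument using the structure of $H_j'$ as a modification of a contractible Stein manifold.
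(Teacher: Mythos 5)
Your overall strategy is the same as the paper's: slice $\tilde C$ into the traces it leaves on consecutive fundamental domains, prove each slice is connected, and check that consecutive slices meet (your step (ii) is fine: $F_A(H)\subset H$ and $H\cap\del\D\neq\emptyset$ since $l\leq n-2$). The only real difference in the connectedness of the slice is bookkeeping — the paper's slice $\tilde C^m$ straddles $W_{m-1}\sqcup_\gamma W_m$, attaching each strict transform $H'_{\nu(j),(m-1)}$ to the exceptional component $E_{j,(m)}$ it gets glued to, whereas yours sits in a single $W_m$ — and both arguments use $n>2$ in the same essential place, namely to guarantee that two irreducible components of $\pi^{-1}(H)$ which meet in $\hat\D^{(k)}$ do so along a compact analytic set of dimension $\geq n-2\geq 1$, which cannot be swallowed by the Stein open set $\sigma(\D)$. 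Your Andreotti--Frankel/duality argument for the connectedness of $E_j\cap W$ is actually more detailed than what the paper writes down (the paper disposes of this point with an informal ``cut a connected set out of $E$ and glue a connected set back in'' remark), and it is correct: $U_j=E_j\cap\sigma(\D)$ is a closed submanifold of a Stein domain, hence Stein of dimension $n-1$, so $H^1_c(U_j)\cong H_{2n-3}(U_j)=0$ precisely when $n>2$, and the compactly supported sequence applies because $E_j$ is compact.

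The gap you flag at the end — connectedness of $H'_j\cap W$ for the non-compact strict transforms — is genuine as your argument stands, because the compactly supported exact sequence gives $H^0_c(H'_j\setminus U)$ in the middle term when $H'_j$ is non-compact, which says nothing about connectedness. But it is exactly the issue that the paper's opening move eliminates, and you should adopt that move: by Lemma~\ref{cov}, $M_{A^p}\to M_A$ is a finite covering with the same universal cover, so $\tilde C_{A^p}=\tilde C_A$, and by Remark~\ref{positive} one may replace $A$ by a power and assume $A$ is $l$-positive. In that case Lemma~\ref{Hj} gives $F_A(H_j)=\{0\}$ for $l+1\leq j\leq n$, hence $H'_j\cap\sigma(\D)=\emptyset$ and $H'_j\cap W=H'_j$ is irreducible, hence connected, with nothing to prove. (If you prefer not to make this reduction, the statement is still true: when $H'_j$ does meet $\sigma(\D)$, the intersection is $\sigma(H_\nu)$ for a single hyperplane section $H_\nu$, which is diffeomorphic to $\RR^{2n-2}$, so $H^1_c(\sigma(H_\nu))=0$ and the long exact sequence of the open pair $\sigma(H_\nu)\subset H'_j$ in ordinary (not compactly supported) cohomology, or a direct argument, still yields connectedness — but this requires identifying $H'_j\cap\sigma(\D)$ precisely, which is extra work the $l$-positive reduction spares you.)
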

\begin{proof}
Up to taking a positive power of $A$, which does not change $\tilde C$ by Lemma~\ref{cov}, we can suppose that $A$ is $l$-positive.

By Lemma~\ref{Hj}, we can enumerate the components of the exceptional divisor $E$ of $\pi$, $E_1,\ldots, E_k$, such that $E_1,\ldots, E_{n-l}$ intersect $\sigma(\D)$ and $E_{n-l+1},\ldots, E_k\subset W$. Moreover, there exists a bijection $\nu:\{1,\ldots, n-l\}\rightarrow\{l+1,\ldots, n\}$ such that $\sigma(H_{\nu(j)})\subset {E_j}$, for $1\leq j\leq n-l$. 

For each $m\in\ZZ$, let $W_m$ be a copy of $W$. Let us also denote by $E_{j,(m)}$ and by $H'_{j,(m)}$ the copies of $H'_j$ and of $E_j$ in $W_m$. For $1\leq j\leq n-l$, we put:
\begin{equation*}
\tilde C^{m}_j:=H'_{\nu(j),(m-1)}\sqcup_\gamma E_{j,(m)}\subset W_{m-1}\sqcup_\gamma W_{m}.
\end{equation*} 
For $n-l+1\leq j\leq k$, we put $\tilde C_j^{m}:=E_{j,(m)}\subset W_{m}$. Thus $\tilde C^{m}:=\sum_{j=1}^k\tilde C_j^{m}$ is a lift of $C$ to $\tilde M_A$ and $\tilde C=\cup_{m\in\ZZ}\tilde C^{m}$.

Now, since $n>2$ and we only perform blow-ups at points, we have $\emptyset\neq H'_{\nu(j)}\cap H'_{\nu(p)}\subset W$ for any $1\leq j,p\leq n-l$, hence $\tilde C^m\cap W_{m-1}$ is connected. On the other hand, $\tilde C^m$ is obtained by cutting a connected subset from $E$, $E\cap\sigma(D)$, and gluing back another connected subset, $\sum_{j=l+1}^kH'_{j,(m-1)}$. We infer that since $E$ is connected, also $\tilde C^m$ is connected. 

Finally, since for any $l+1\leq j\leq k$,  $H'_j$ intersects $E$ in $W$, it follows that $\tilde C^{m+1}$ intersects $\tilde C^m$ in $W_m$. We thus conclude that $\tilde C$ is connected and so is $C=q(\tilde C)$. 
\end{proof}

\section{The compactification point of view}\label{secComp}

In complex dimension 2, the manifolds $M_A$ are Inoue-Hirzebruch surfaces (see \cite{d1}, \cite{d2}). In a description coming from number theory, due to Hirzebruch (see \cite{h}), these surfaces appear as compactifications of  quotients of $\mathbb{H} \times \mathbb{C}$ by $U \ltimes L$, where $L$ is a finite index lattice in the ring of integers of some quadratic field $K$ and $U$ is a cyclic group of positive units in $K$. 

In what follows, we wish to show that a similar compactification statement holds in any dimension.  Let us first introduce one more definition.

Given a holomorphic map $F:\CC^n\rightarrow \CC^n$ which fixes $0\in\CC^n$, one can define its stable set:
\begin{equation*}
W^s(F):=\{z\in\CC^n|\lim_{m\to\infty}||F^{m}(z)||=0\}.
\end{equation*}
Note that for any $p>0$ one has $F(W^s(F^{p}))\subset W^s(F^{p})$, from which it easily follows that $W^s(F)=W^s(F^{p})$. For a Kato matrix $A\in\GL(n,\ZZ)$ of type $l$, let us moreover define:
\begin{equation*}
W^s(F_A)^*:=W^s(F_A)\cap(\CC^l \times \Cn{n-l}) \subset \CC^l \times \Cn{n-l}. 
\end{equation*}
Note that $W^s(F_A)^*$ is fixed by the action of the group $U_A:=\langle F_A\rangle\cong\ZZ$. In all that follows, for fixed $l\geq 0$, we will use the notation:
\begin{equation*}
\DD^*=\DD\cap(\CC^l \times \Cn{n-l})\subset \CC^n.
\end{equation*}

\begin{lem}\label{descrW}
Given a Kato matrix $A$ of type $l$, one has $W^s(F_A)^*=\bigcup_{m\in\ZZ} F_A^m(\DD^*)$.
\end{lem}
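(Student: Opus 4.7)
The plan is to prove the two inclusions separately, using the observation that $F_A$ restricts to a biholomorphism of $X:=\CC^l\times\Cn{n-l}$. Indeed, by Lemma~\ref{LineL}, $F_A(z,w)=(z_1w^L,\ldots,z_lw^L,F_B(w))$, where $B\in\GL(n-l,\ZZ)$ is itself a product of elementary matrices, so $F_B$ is a biholomorphism of $\Cn{n-l}$; inverting the top block with the aid of $F_B^{-1}$ shows $F_A|_X$ is invertible. Consequently $U_A$ acts on $X$, and the set $W^s(F_A)^*$ is $U_A$-invariant: if $F_A^k(z)\to 0$ then so do $F_A^k(F_A^{\pm 1}(z))$, while $X$ is preserved by the action.

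To prove $\bigcup_{m\in\ZZ}F_A^m(\DD^*)\subset W^s(F_A)^*$, by $U_A$-invariance it suffices to show $\DD^*\subset W^s(F_A)^*$. For $z\in\DD^*\cap(\CC^*)^n$ I would pass to logarithmic coordinates $u_i:=-\log|z_i|\in(0,+\infty)$, under which $F_A$ becomes the linear map $u\mapsto Au$. By Remark~\ref{positive} some power $A^p$ is $l$-positive; the Perron-Frobenius theorem applied to its strictly positive lower-right block $B^p$ yields $B^m u^{(2)}\to+\infty$ componentwise. Since by Lemma~\ref{LineL} every row of the top-right block $G$ equals the non-zero vector $L\in\NN^{n-l}$, the top block $u^{(1)}+G\bigl(\sum_{k=0}^{m-1}B^k\bigr)u^{(2)}$ also diverges componentwise, giving $F_A^m(z)\to 0$. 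For $z\in\DD^*$ with some of the first $l$ coordinates vanishing, these coordinates remain zero under iteration, so the same log-coordinate argument applied to the non-vanishing coordinates completes the proof.

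The reverse inclusion is immediate: if $z\in W^s(F_A)^*$ then $F_A^m(z)\to 0$ and each iterate lies in $X$ by invariance, so $F_A^m(z)\in\DD^*$ for all sufficiently large $m$, whence $z\in F_A^{-m}(\DD^*)$. I expect the main obstacle to be the convergence step in the first inclusion: Lemma~\ref{contractie12} provides contracting behaviour only on the non-standard ball $\DD_{1,2}$, whereas the statement concerns the standard ball $\DD$, so a purely norm-based contraction estimate will not suffice. The log-coordinate linearisation combined with the Perron-Frobenius analysis of the $l$-positive power $A^p$ is what bridges this gap.
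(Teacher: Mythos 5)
Your proof is correct and its skeleton is the same as the paper's: show $\DD^*\subset W^s(F_A)^*$, use the $U_A$-invariance of $W^s(F_A)^*$ (which relies on $F_A$ being an automorphism of $\CC^l\times\Cn{n-l}$) for one inclusion, and observe that the other inclusion is immediate because any stable orbit eventually enters $\DD^*$. The only place where you genuinely diverge from the paper is the convergence step $\DD^*\subset W^s(F_A)^*$: the paper proves directly the elementary entrywise bound $a^{[(n-l)(m+1)]}_{st}\geq (n-l)^{m-1}$ for $l+1\leq t\leq n$ on the powers of $A$, and concludes $\|F_A^m(z)\|\to 0$ from $|z_j|<1$ and $z_j\neq 0$ for $j>l$; you instead pass to logarithmic coordinates and invoke Perron--Frobenius for the positive block $B^p$ of an $l$-positive power. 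Both are valid; the paper's count is more elementary and self-contained, while your log-coordinate linearisation anticipates the technique the paper itself uses later (in the proof of Theorem~\ref{Thmcompact}), at the cost of a couple of glossed details -- that the Perron root of $B^p$ exceeds $1$ (clear, since $B^p$ is a strictly positive integer matrix of size $n-l\geq 2$, so its row sums are at least $2$) and that divergence along the subsequence $m\in p\ZZ$ propagates to all $m$ (clear, since multiplication by $B$ does not decrease the minimal component of a positive vector). Your closing worry about Lemma~\ref{contractie12} versus the standard ball is well placed, and both your argument and the paper's avoid the issue by working with the exponents rather than with a norm contraction.
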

\begin{proof}
Let us first show that $\DD^*\subset W^s(F_A)^*$. It is straightforward to check that, for any $m\geq 1$, the components of $A^{(n-l)(m+1)}$ satisfy:
\begin{equation*}
a^{[(n-l)(m+1)]}_{st}\geq (n-l)^{m-1}
\end{equation*}
for any $1\leq s\leq n$ and $l+1\leq t\leq n$. Thus, for such pairs $(s,t)$ one has $\lim_{m \to \infty} a^{[m]}_{st} = + \infty$. It follows that for any $z\in\DD^*$ one has $\lim_{m \to \infty} ||F_A^m(z)||=0$.

Now clearly $W^s(F_A)^*$ is invariant with respect to the action of $U_A$, so that $F_A^m(\DD^*)\subset W^s(F_A)^*$ for any $m\in\ZZ$. Finally, if $z\in W^s(F_A)^*$, then for big enough $m>0$ one has $F_A^m(z)\in\DD^*$.
\end{proof}

\begin{lem}\label{outD}
Let $A$ be a Kato matrix of type $l$ and $z\in\DD^*$. Then there exists $m\in\NN$ so that $F_A^{-m}(z)\notin F_A(\DD^*)$.
\end{lem}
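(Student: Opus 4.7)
My plan is to argue by contradiction, using Perron--Frobenius theory on a suitable power of the bottom-right block of $A$. Suppose that $F_A^{-m}(z) \in F_A(\DD^*)$ for every $m \in \NN$. Since $F_A$ restricts to an injective self-map of $\CC^l \times (\CC^*)^{n-l}$, this is equivalent to $z_k := F_A^{-k}(z)$ lying in $\DD^*$ for every $k \geq 0$.

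Set $p := n - l$; by Remark~\ref{positive}, $A^p$ is $l$-positive, so its bottom-right $(n-l) \times (n-l)$ block is $B^p$ (where $B$ denotes the bottom-right block of $A$), with all entries $\geq 1$. The subsequence $z_{pj} = F_{A^p}^{-j}(z)$ still lies in $\DD^*$ for every $j \geq 0$. By Lemma~\ref{LineL}, $F_{A^p}$ acts on the last $n-l$ coordinates as $F_{B^p}$; letting $w_j \in (\CC^*)^{n-l}$ denote the projection of $z_{pj}$ to those coordinates, we have $w_j = F_{B^p}^{-j}(w_0)$ and $\|w_j\|^2 \leq \|z_{pj}\|^2 < 1$.

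Pass to the logarithmic chart $\phi(w) := (\log|w_{l+1}|, \ldots, \log|w_n|)^T \in \RR^{n-l}$; then $\phi(w_j) = (B^p)^{-j} \phi(w_0)$, and every component of $\phi(w_0)$ is strictly negative. Since $B^p$ has all entries strictly positive, Perron--Frobenius theory applied to $(B^p)^T$ provides a strictly positive eigenvector $v^* \in \RR^{n-l}_{>0}$ with eigenvalue $\tilde\rho := \rho(B^p)$; moreover $\tilde\rho$ is bounded below by the minimum column sum of $B^p$, and hence by $n - l \geq 2$. Consequently
\begin{equation*}
\langle v^*, \phi(w_j) \rangle = \tilde\rho^{-j} \langle v^*, \phi(w_0) \rangle \longrightarrow 0 \quad \text{as } j \to \infty.
\end{equation*}
Since $\phi_i(w_j) \leq 0$ while $v_i^* > 0$ for every $i$, every summand in $\sum_i v_i^* \phi_i(w_j)$ is non-positive, so the convergence above forces $\phi_i(w_j) \to 0$, i.e.\ $|w_{j,i}| \to 1$, for each $i$. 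But then $\|w_j\|^2 = \sum_i |w_{j,i}|^2 \to n - l \geq 2$, so for $j$ sufficiently large $\|z_{pj}\| \geq \|w_j\| > 1$, contradicting $z_{pj} \in \DD^* \subset \DD$.

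The main subtle point is the passage to $A^p$: a general Kato matrix need not be $l$-positive, so direct Perron--Frobenius would produce only a non-negative left eigenvector of $B$ and give no immediate lower bound on $\rho(B)$. Once Remark~\ref{positive} provides strict positivity of $B^p$, the remainder is a logarithmic calculation that uses crucially $n - l \geq 2$.
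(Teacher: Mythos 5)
Your argument is correct, and it reaches the contradiction by a genuinely different mechanism than the paper. Both proofs start the same way: assume all backward iterates stay in $F_A(\DD^*)$ and pass to a power of $A$ (Remark~\ref{positive}) to get strict positivity of the bottom-right block. From there the paper works multiplicatively and ``forward'': it shows the entries of $A^{m+1}$ grow like $(n-l)^{m-1}$, deduces $\|F_A^{m+1}(z)\|^2\leq n\,|z_{l+1}\cdots z_n|^{2(n-l)^{m-1}}$, uses the uniform contraction $F_A(\DD^*)\subset\DD_{1-\epsilon}$ (via Lemma~\ref{contractie}) to extract a convergent subsequence of the backward orbit, and then lets the doubly-exponential exponent collapse the inequality to $1\leq|z_{l+1}^\infty\cdots z_n^\infty|^2<1$. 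You instead work in logarithmic coordinates and pair the backward orbit against a left Perron eigenvector of $B^{\,p}$: the pairing decays like $\tilde\rho^{-j}$, the sign constraints force every coordinate of $\phi(w_j)$ to $0$, and hence $\|w_j\|^2\to n-l\geq 2$, contradicting $\|w_j\|<1$. Your route avoids both the subsequence extraction and the need for the uniform bound $F_A(\DD^*)\subset\DD_{1-\epsilon}$, at the cost of invoking Perron--Frobenius; it is also closer in spirit to the machinery the paper itself deploys later (the Perron vector and the covering $p(z)=(\e^{2\pi z_1},\ldots,\e^{2\pi z_n})$ in Theorem~\ref{Thmcompact} and Proposition~\ref{holofunct}), so one could view it as a unification of those arguments. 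Both proofs use $n-l\geq 2$ in an essential way --- the paper for the exponential growth $(n-l)^m\to\infty$, you for $\tilde\rho>1$ and for the final inequality $n-l>1$ --- and you correctly flag this. All the individual steps check out: the equivalence between $F_A^{-m}(z)\in F_A(\DD^*)$ for all $m$ and $F_A^{-k}(z)\in\DD^*$ for all $k$ uses the bijectivity of $F_A$ on $\CC^l\times(\CC^*)^{n-l}$; the identity $\phi(F_M(w))=M\phi(w)$ gives $\phi(w_j)=(B^{\,p})^{-j}\phi(w_0)$; and the lower bound $\rho(B^{\,p})\geq$ minimum column sum $\geq n-l$ is standard for positive integer matrices.
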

\begin{proof}
After eventually replacing $F_A$ by $F_A^{n-l}$, we can suppose that $A$ is $l$-positive (see Remark~\ref{positive}). As in the proof of Lemma~\ref{descrW}, it follows inductively that the coefficients $a_{st}^{[m+1]}$ of $A^{m+1}$ satisfy $a_{st}^{[m+1]}\geq (n-l)^{m-1}$, for any $1\leq s\leq n$ and $ l+1\leq t\leq n$. Thus for any $z\in\DD^*$ one has:
\begin{equation}\label{Fm}
||F^{m+1}_A(z)||^2\leq n|z_{l+1}\cdots z_n|^{2(n-l)^{m-1}}.
\end{equation}

Suppose now on the contrary that for any $m\in\NN$, $z^{(m)}:=F_A^{-(m+1)}(z)\in F_A(\DD^*)$. Since $A$ is $l$-positive, by Lemma~\ref{contractie} there exists $\epsilon>0$ so that $F_A(\DD^*)\subset \DD_{1-\epsilon}$. Thus, there exists a subsequence $(z^{(m_j)})_{j\in\NN}$ that converges to $z^\infty\in\overline \DD_{1-\epsilon}$. In particular, one has 
\begin{equation}\label{limm}
\lim_{j\to\infty}|z_{l+1}^{(m_j)}\cdots z_n^{(m_j)}|=|z_{l+1}^\infty\cdots z_n^\infty|<1.
\end{equation}

On the other hand, by \eqref{Fm} one has:
\begin{equation*}
||z||^2=||F^{m_j+1}(z^{(m_j)})||^2\leq n|z_{l+1}^{(m_j)}\cdots z_n^{(m_j)}|^{2(n-l)^{m_j-1}}
\end{equation*}
so that
\begin{equation*}
1=\lim_{j\to\infty} (\frac{1}{n}||z||^2)^{(n-l)^{1-m_j}}\leq |z_{l+1}^\infty\cdots z_n^\infty|^2.
\end{equation*}
But this last equation contradicts \eqref{limm}, hence there exists some $m\in\NN$ for which $z^{(m)}\notin F_A(\DD^*)$. 
\end{proof}

\begin{prop}\label{compactification}
For any Kato matrix $A$, we have a biholomorphism:
\begin{equation*}
M_A-C\cong W^s(F_A)^*/U_A.
\end{equation*}
\end{prop}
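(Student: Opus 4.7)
The plan is to build a $\ZZ$-equivariant biholomorphism $\tilde\Phi:\tilde M_A-\tilde C\to W^s(F_A)^*$ on universal covers, from which the proposition will follow by passing to quotients. Two preliminary observations are key. First, by Lemma~\ref{LineL}, $F_A(z,w)=(z_1w^L,\ldots,z_lw^L,F_B(w))$ with $B\in\GL(n-l,\ZZ)$, so $F_B$ is an automorphism of $\Cn{n-l}$ and $w^L$ never vanishes on $\Cn{n-l}$; hence $F_A$ restricts to a biholomorphism of $\CC^l\times\Cn{n-l}$, and the group $U_A=\langle F_A\rangle$ acts freely on $W^s(F_A)^*$ (a fixed point would yield a bounded orbit, contradicting $\lim F_A^m(z)=0$ together with $0\notin\CC^l\times\Cn{n-l}$, which holds as $n-l\geq 2$). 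Second, the same formula shows $F_A^{-1}(H)=H$ for the hypersurface $H=\{z_{l+1}\cdots z_n=0\}\subset\D$; combining this with Lemma~\ref{Hj} and the fact that $\pi:\hat\D^{(k)}-E\to\D-\{0\}$ is a biholomorphism with $E\subset\pi^{-1}(H)$, one obtains that $\pi$ restricts to a biholomorphism
\begin{equation*}
W-(C\cap W)=W-\pi^{-1}(H)\xrightarrow{\ \pi\ }\D^*-F_A(\D^*),
\end{equation*}
where $\D^*:=\D\cap(\CC^l\times\Cn{n-l})$.

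Using the description $\tilde M_A=\bigsqcup_{m\in\ZZ}W_m$ with $W_m$ copies of $W$ glued via $\gamma=\sigma\circ\pi$, I define
\begin{equation*}
\tilde\Phi|_{W_m}(x):=F_A^{-m-1}(\pi(x))\in W^s(F_A)^*.
\end{equation*}
Compatibility with the gluing is immediate from $\pi\circ\sigma=F_A$: for $x\in\partial_+W_m$,
\begin{equation*}
\tilde\Phi|_{W_{m+1}}(\sigma\circ\pi(x))=F_A^{-m-2}\bigl(F_A(\pi(x))\bigr)=F_A^{-m-1}(\pi(x))=\tilde\Phi|_{W_m}(x),
\end{equation*}
so $\tilde\Phi$ is a well-defined holomorphic map on $\tilde M_A-\tilde C$.

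The image of $W_m-\tilde C\cap W_m$ under $\tilde\Phi$ equals $F_A^{-m-1}(\D^*)-F_A^{-m}(\D^*)$. Lemma~\ref{descrW}, whose proof applies verbatim with $\D$ in place of $\DD$, gives $\bigcup_m F_A^{-m-1}(\D^*)=W^s(F_A)^*$, and Lemma~\ref{outD} guarantees that $m^\ast:=\min\{m\in\ZZ:z\in F_A^{-m-1}(\D^*)\}$ exists for every $z\in W^s(F_A)^*$. Consequently $W^s(F_A)^*$ decomposes as the disjoint union of annuli $\bigsqcup_m(F_A^{-m-1}(\D^*)-F_A^{-m}(\D^*))$, which shows that $\tilde\Phi$ is a bijection; piecewise-defined holomorphic inverses $\pi^{-1}\circ F_A^{m+1}$ then show it is a biholomorphism.

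Finally, the deck generator $\psi$ of $\pi_1(M_A)\cong\ZZ$ sends $W_m$ to $W_{m+1}$ by the identity on fundamental-domain content, so $\pi(\psi(x))=\pi(x)$ and thus $\tilde\Phi\circ\psi=F_A^{-1}\circ\tilde\Phi$. The isomorphism $\pi_1(M_A)\cong U_A$ sending $\psi\mapsto F_A^{-1}$ intertwines the two actions, and passing to quotients yields $M_A-C\cong W^s(F_A)^*/U_A$. The main technical point is the annular decomposition of $W^s(F_A)^*$ used for the bijectivity of $\tilde\Phi$; this relies crucially on the ``exit in finite time'' guaranteed by Lemma~\ref{outD}, without which neither the definition of $m^\ast$ nor the single-sheeted covering behaviour of $\tilde\Phi$ would be available.
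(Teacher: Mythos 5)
Your proposal is correct and follows essentially the same route as the paper: the paper defines the same map on the universal cover (written as $x\mapsto F_A^m\pi(x)$ for $x\in W_m$, differing from yours only by an indexing convention and a global power of the automorphism $F_A$), and it derives bijectivity from Lemmas~\ref{descrW} and~\ref{outD} by proving injectivity and surjectivity separately, which is the same content as your disjoint annular decomposition of $W^s(F_A)^*$. The only organizational difference is that the paper first reduces to the $l$-positive case via Remark~\ref{positive} so as to work with the round ball, whereas you keep $\D=\BB_{1,2}$ and note that the lemmas transfer.
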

\begin{proof}
Let $\tilde C\subset \tilde M_A$ be the lift of $C$ to $\tilde M_A$.  Clearly $\tilde C$ is fixed by $\pi_1(M_A)$, so that we have an action of $\pi_1(M_A)$ on $\tilde M_A-\tilde C$, whose quotient is precisely $M_A-C$. We thus need to find a biholomorphism between $\tilde M_A-\tilde C$ and $W^s(F_A)^*$ which is equivariant with respect to the actions of $\pi_1(M_A)$ and $U_A$ respectively. 

We suppose that $A$ is of type $l$. Moreover, since $\tilde M_{A^p}-\tilde C_{A^p}=\tilde M_A-\tilde C_A$ by Lemma~\ref{cov}, it suffices to prove the above for $A^p$, $p>0$, hence by Remark~\ref{positive}, we can suppose that $A$ is $l$-positive.

We start by defining $\psi: \tilde M_A-\tilde C\rightarrow \CC^l \times \Cn{n-l}$  by $\psi(x)=F^m_A\pi(x)$, where $m\in\ZZ$ is determined by the condition $x\in W_m$ and $\pi$ is the blow-down map from each copy $W_m$ of $W$ to $\DD$. Clearly, $\psi$ is holomorphic. Moreover, by Lemma~\ref{Hj}, $\pi^{-1}\{z_{l+1}\cdots z_n=0\}\cap W=C\cap W$, so that indeed $\psi$ takes values in $\CC^l \times \Cn{n-l}$. 

Note that $F_A: \CC^l \times \Cn{n-l} \rightarrow \CC^l \times \Cn{n-l}$ is an isomorphism. Indeed, writing $A=\begin{tiny}\begin{pmatrix}
  I_l & G \\
  0 & B
 \end{pmatrix}\end{tiny}$ with $L\in\ZZ^{n-l}$ denoting a line of $G$ and writing $F_A$ as in \eqref{standardform}:
\begin{equation*}
F_A(z,w)=(z_1w^L, \ldots,z_lw^L, w^B), \ \ z\in\CC^l, \ w\in(\CC^*)^{n-l}
\end{equation*}
it is easy to check that the holomorphic map
\begin{equation*} 
H_A(z,w)=(z_1w^{-LB^{-1}},\ldots,z_lw^{-LB^{-1}},w^{B^{-1}}), \ \ z\in\CC^l, \ w\in(\CC^*)^{n-l}
\end{equation*}
is the inverse of $F_A$.

Let us prove that $\psi$ is injective. Let $x \in W_m$ and $y \in W_q$ be such that $F^m_{A}(\pi(x))=F^q_{A}(\pi(y))$. Assume first that $m>q$, and let $x'$ and $y'$ be the copies of $x$ and $y$ respectively in $W$. We then have:
 \begin{equation*}
 F_A(\DD)\ni F_A^{m-q}(\pi(x'))=\pi(y')\in \pi(W).
 \end{equation*}
However, this is impossible since $\pi(W)=\DD-\pi(\sigma(\DD))=\DD-F_A(\DD)$. It follows then that $m=q$ and $x$ and $y$ stay in the same copy of $W$. Since $\pi$ is a biholomorphism outside the exceptional divisor and $F_A$ is also invertible, it follows that $x=y$.  
  
Thus, as $\psi$ is a holomorphic map between $n$-dimensional manifolds, it follows that it is a biholomorphism onto its image (see for instance \cite[Section~4]{shabat}). Let us show that $\psi(\tilde M_A-\tilde C)=W^s(F_A)^*$.

By definition, by Lemma~\ref{Hj} and by Lemma~\ref{descrW}, we have $\im \psi\subset \bigcup_{m\in\ZZ}F_A^m(\DD^*)=W^s(F_A)^*$. Let now $w\in W^s(F_A)^*$. In order to show that $w\in\im\psi$, it suffices to find $u\in\pi(W-C)=\DD^*-F_A(\DD^*)$ and $m\in \ZZ$ so that $w=F_A^m(u)$.  

By Lemma~\ref{descrW}, there exist $z\in\DD^*$ and $a\in\ZZ$ so that $w=F_A^a(z)$. If $z\notin F_A(\DD^*)$, then we simply take $m=a$ and $u=z$. If not, then by Lemma~\ref{outD}, there exists a minimal number $p>0$ so that $v:=F_A^{-p}(z)\notin F_A(\DD^*)$. By our choice of $p$, $F_A^{-p+1}(z)=F_A(v)=F_A(v_1)$ for some $v_1\in\DD^*$, but since $F_A$ is invertible, we find $v=v_1\in\DD^*$. Thus we can put $u=v$ and $m=a+p$.

Finally, the map $\psi$ is clearly equivariant with respect to the action of $\pi_1(M_A)$ on $\tilde M_A-\tilde C$ and the action on $U_A$ on $W^s(F_A)^*$. It follows that $\psi$ descends to a biholomorphism between $M-C$ and $W^s(F_A)^*/U_A$.
\end{proof}

\begin{lem}\label{stablerelation} If $A=\begin{tiny}\begin{pmatrix}
  I_l & * \\
  0 & B
 \end{pmatrix}\end{tiny}$ is a Kato matrix of type $l$, then $W^s(F_A)^*=\CC^l \times W^s(F_B)^*$.
\end{lem}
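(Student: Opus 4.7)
The plan is to use the explicit form of $F_A$ furnished by Lemma~\ref{LineL}: writing $L \in \NN^{n-l}$ for the common row of $G$, we have
\begin{equation*}
F_A(z,w) = (z_1 w^L, \ldots, z_l w^L,\, F_B(w)), \qquad (z,w) \in \CC^l \times \CC^{n-l}.
\end{equation*}
A straightforward induction on $m$ then yields
\begin{equation*}
F_A^m(z,w) = \bigl(z \cdot P_m(w),\; F_B^m(w)\bigr), \qquad P_m(w) := \prod_{j=0}^{m-1} F_B^j(w)^L,
\end{equation*}
which decouples the stable set condition for $F_A$ into a condition on $F_B^m(w)$ and a condition on the scalar factor $P_m(w)$. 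The case $l=0$ is empty since then $A=B$ and the statement is tautological, so I will assume $l \geq 1$.

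The inclusion $W^s(F_A)^* \subseteq \CC^l \times W^s(F_B)^*$ is immediate from the second block of the formula above: if $F_A^m(z,w) \to 0$, then $F_B^m(w) \to 0$, and the requirement $w \in (\CC^*)^{n-l}$ is built into the notation $W^s(F_A)^*$; hence $w \in W^s(F_B)^*$, while $z \in \CC^l$ is arbitrary.

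The nontrivial direction is to show that whenever $z \in \CC^l$ and $w \in W^s(F_B)^*$, one has $P_m(w) \to 0$, so that $F_A^m(z,w) \to 0$. The key point, which I expect to be the only subtle step, is that $L$ has non-negative integer entries with $\sum_{i} L_i \geq 1$; this follows from the inductive construction in the proof of Lemma~\ref{LineL}, where $L$ is built up from $L_0 = (0,\ldots,0,1)$ by adding last rows of matrices with non-negative integer entries, so positivity persists. Granting this, taking logarithms gives
\begin{equation*}
\log|P_m(w)| = \sum_{j=0}^{m-1} \sum_{i=l+1}^{n} L_i \, \log\bigl|F_B^j(w)_i\bigr|,
\end{equation*}
and since $w \in W^s(F_B)^*$ implies $F_B^j(w) \to 0$ coordinate-wise inside $(\CC^*)^{n-l}$, each $\log|F_B^j(w)_i|$ tends to $-\infty$. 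Because the $L_i$ are non-negative with at least one strictly positive, the inner sum eventually becomes arbitrarily negative, so the partial sums diverge to $-\infty$; equivalently, $P_m(w) \to 0$. Combined with $F_B^m(w) \to 0$, this gives $F_A^m(z,w) \to 0$ and proves the reverse inclusion.
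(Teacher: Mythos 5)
Your proof is correct. Both you and the paper start from the block monomial form $F_A(z,w)=(z_1w^L,\ldots,z_lw^L,F_B(w))$ of Lemma~\ref{LineL}, and the inclusion $\subseteq$ is handled identically. For the reverse inclusion the routes differ in mechanism: the paper first iterates $F_A$ a finite number $m_0$ of times so that the $w$-coordinates land inside the unit ball, and then invokes the divergence of the exponents $a^{[m]}_{st}$ of $A^m$ (the entry-growth estimate already used in Lemma~\ref{descrW}) to conclude that the monomial factor multiplying $z$ tends to $0$; you instead write that factor as the telescoping product $P_m(w)=\prod_{j=0}^{m-1}F_B^j(w)^L$ and show $\log|P_m(w)|\to-\infty$ directly from $F_B^j(w)\to 0$ together with $L\in\NN^{n-l}\setminus\{0\}$. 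You correctly isolate the one point that genuinely needs checking, namely $L\neq 0$, and your justification is sound: the induction in the proof of Lemma~\ref{LineL} builds every line of $G$ by adding non-negative rows to $L_0=(0,\ldots,0,1)$, so $L$ dominates $L_0$. Your version is somewhat more self-contained (no preliminary normalization into the ball, no appeal to the asymptotics of the entries of $A^m$), at the cost of the explicit product formula; both arguments deliver the same conclusion with comparable effort.
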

\begin{proof}
Using \eqref{standardform}, we write:
\begin{equation*}
F_{A^m}(z)=(z_1G^{(m)}(z_{l+1}, \ldots, z_n), \ldots,z_lG^{(m)}(z_{l+1}, \ldots, z_n), F_{B^m}(z_{l+1}, \ldots, z_n)),
\end{equation*}
hence the inclusion $\subseteq$ is clear. Conversely, let $z=(z_1, \ldots, z_n) \in \CC^l \times W^s(F_B)^*$. Since we have $\lim_{m \to \infty} F^m_{B}(z_{l+1}, \ldots, z_n)=0$, there exists $m_0>0$ such that: 
\begin{equation*}
(z'_{1}, \ldots, z'_n):=F_{A^{m_0}}(z) \in \CC^l\times\DD\subset\CC^{n}
\end{equation*} 
where $\BB$ is the ball in $\CC^{n-l}$. Therefore:  
\begin{align*}
F_{A^{m+m_0}}(z_1, \ldots, z_n)&=F_{A^m}(z'_1, \ldots, z'_n)\\
&=(z'_1G^{(m)}(z'_{l+1}, \ldots, z'_n), \ldots,z'_lG^{(m)}(z'_{l+1}, \ldots, z'_n), F_{B}^m(z'_{l+1}, \ldots, z'_n))
\end{align*}
tends to 0 as $m \to \infty$, since for any $1\leq s\leq n$ and  $l+1 \leq t \leq n$, $|z'_{t}|<1$ and $\lim_{m \to \infty} a^{[m]}_{st}=\infty$. 
\end{proof}

%\begin{lem}\label{vectprJ}
%Let $A=\begin{tiny}\begin{pmatrix}
%  I_l & G \\
%  0 & B
% \end{pmatrix}\end{tiny}$ be a Kato matrix of type $l>0$, and let $L\in\ZZ^{n-l}$ denote a line of $G$.  Then there exists $p\in\NN$ and $J\in\ZZ^{n-l}$ satisfying $pL+JB=J$.
%\end{lem}

\begin{thm}\label{Thmcompact} Let $A=\begin{tiny}\begin{pmatrix}
  I_l & G \\
  0 & B
 \end{pmatrix}\end{tiny}$ be a Kato matrix. Then we have a biholomorphism:
\begin{equation*}
M_A-C\cong\HH\times\CC^{n-1}/U_A\ltimes \Lambda
\end{equation*}
where $\Lambda\subset\CC^{n}$ is a rank $n-l$ lattice and $U_A=\langle F_A \rangle \cong\ZZ$. Moreover, if $l>0$, then there exists a finite ramified covering of degree $n-l-1$:
\begin{equation*}
\Phi: M_A-C_A \rightarrow \CC^l\times (M_B-C_B)
 \end{equation*}
 where $C_A$ and $C_B$ denote the corresponding cycles of divisors in the Kato manifolds $M_A$ and $M_B$ respectively. In particular, for $l=n-2$, $\Phi$ is a biholomorphism.
\end{thm}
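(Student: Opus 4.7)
The plan splits into two independent parts: identifying the universal cover of $M_A - C$, and constructing the ramified cover $\Phi$ when $l > 0$.

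\emph{Universal cover.} By Proposition~\ref{compactification} and Lemma~\ref{stablerelation}, $M_A - C \cong (\CC^l \times W^s(F_B)^*)/U_A$, so it suffices to describe $W^s(F_B)^*$. I would pass to the coordinate-wise exponential $\exp\colon \CC^{n-l} \to (\CC^*)^{n-l}$, under which $F_B(w)=w^B$ becomes the linear map $\zeta\mapsto B\zeta$. The main input is Perron--Frobenius: by Remark~\ref{positive} some power $B^p$ is a strictly positive integer matrix, so Perron--Frobenius produces a strictly dominant positive eigenvalue $\alpha^p$ with positive right eigenvector $v^+$. Since $B$ has nonnegative entries and $Bv^+$ lies in the same one-dimensional eigenspace of $B^p$, one deduces that $v^+$ is also an eigenvector of $B$ with positive eigenvalue $\alpha$; dually there is a positive left eigenvector $u^+$ with $u^+B=\alpha u^+$ and $u^+v^+=1$. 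The condition $|\det B|=1$ together with strict dominance forces $\alpha > 1$. The asymptotic $B^k\zeta \sim \alpha^k (u^+\zeta) v^+$, combined with $v^+ > 0$ componentwise, then yields
\begin{equation*}
\exp^{-1}(W^s(F_B)^*) \;=\; \tilde W^s_B \;:=\; \{\zeta\in\CC^{n-l}:\mathrm{Re}(u^+\zeta) < 0\},
\end{equation*}
a complex half-space biholomorphic to $\HH\times\CC^{n-l-1}$ by a linear change of coordinates. Since $u^+$ is real, $(2\pi i\ZZ)^{n-l}$ preserves $\tilde W^s_B$ and is the deck group of $\exp|_{\tilde W^s_B}$.

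Setting $\Omega := \CC^l\times\tilde W^s_B \cong \HH\times\CC^{n-1}$, the map $F_A$ lifts via \eqref{standardform} to $\tilde F_A(z,\zeta)=(z e^{L\zeta},B\zeta)$. Put $\Lambda := \{0\}^l\times(2\pi i\ZZ)^{n-l}\subset\CC^n$, a rank-$(n-l)$ lattice. Since $L$ and $B$ are integer-valued, for $k\in\Lambda$ one has $e^{Lk}=1$ and $Bk\in\Lambda$, so $\tilde F_A\circ\tau_k = \tau_{Bk}\circ\tilde F_A$; hence the group generated by $\tilde F_A$ and $\Lambda$ is the semidirect product $U_A\ltimes\Lambda$. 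Freeness follows because $u^+k$ is purely imaginary for $k\in\Lambda$, so a fixed point of $\tilde F_A^m\circ\tau_k$ with $m\neq 0$ would force $(\alpha^m-1)\mathrm{Re}(u^+\zeta) = 0$, contradicting $\mathrm{Re}(u^+\zeta) < 0$. Proper discontinuity is standard, and the quotient is $M_A - C$, proving the first statement.

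\emph{The map $\Phi$.} Assume $l\geq 1$; then $l\leq n-2$ since $A$ is Kato. Lemma~\ref{vectprJ} provides the identity $J_0(B-I)=(n-l-1)L$, equivalently $(n-l-1)L-J_0B=-J_0$, so every monomial $z^\alpha w^{-J_0}$ with $\alpha\in\NN^l$ and $\sum_i\alpha_i=n-l-1$ is $F_A$-invariant. Define
\begin{equation*}
\Phi([z,w]) := \bigl(z_1^{n-l-1}w^{-J_0},\, z_1^{n-l-2}z_2w^{-J_0},\,\ldots,\, z_1^{n-l-2}z_lw^{-J_0},\, [w]\bigr).
\end{equation*}
Each component is $F_A$-invariant, so $\Phi$ descends to a holomorphic map $M_A-C_A \to \CC^l\times(M_B-C_B)$. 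Over a generic point $(c,[w_0])$, using the representative $w=w_0$, the equation $z_1^{n-l-1}=c_1 w_0^{J_0}$ has $n-l-1$ solutions, and then $z_j = c_j w_0^{J_0}/z_1^{n-l-2}$ is uniquely determined for $j\geq 2$; for generic $w_0$ the $U_A$-stabilizer is trivial, so these yield $n-l-1$ distinct $U_A$-orbits. The cyclic group of $(n-l-1)$-th roots of unity acting diagonally on $(z_1,\ldots,z_l)$ commutes with $\tilde F_A$ and with $\Lambda$, hence descends to $M_A-C_A$ as the deck group of $\Phi$. For $l=n-2$, $n-l-1=1$, all exponents of $z_1$ collapse, and $\Phi$ becomes the linear map $(z,w)\mapsto(zw^{-J_0},[w])$, a biholomorphism with inverse $(u,[w])\mapsto[uw^{J_0},w]$.

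The main obstacle is the Perron--Frobenius step: one must establish that $B$ itself, not merely the positive power $B^p$, admits a dominant real positive eigenvalue with positive eigenvectors, and convert the formal asymptotic $B^k\zeta\sim\alpha^k(u^+\zeta)v^+$ into the precise half-space description $\tilde W^s_B = \{\mathrm{Re}(u^+\zeta)<0\}$. A secondary subtlety is verifying that $\Phi$ is a ramified cover with the desired generic fiber structure, since the ramification locus $\{z_1=0\}$ requires careful treatment.
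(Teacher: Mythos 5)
Your treatment of the first statement is essentially the paper's own argument: reduce via Proposition~\ref{compactification} and Lemma~\ref{stablerelation} to describing $W^s(F_B)^*$, pass to logarithmic coordinates, and use Perron--Frobenius together with the decay of $\al^{-m}B^m$ on the sum of the non-dominant generalized eigenspaces (this is exactly the paper's Lemma~\ref{limL}) to identify the preimage of the stable set with the half-space $\{\re(u^+\zeta)<0\}\cong\HH\times\CC^{n-l-1}$. The ``main obstacle'' you flag is precisely Lemma~\ref{limL} plus the openness of the stable set (needed to rule out the boundary case $\re(u^+\zeta)=0$), and your sketch of it is sound; the extra verifications of freeness and of the semidirect product relation $\tilde F_A\circ\tau_k=\tau_{Bk}\circ\tilde F_A$ are correct but come for free once Proposition~\ref{compactification} is in place.

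For the covering map $\Phi$ you deviate from the paper, and there is a genuine gap. The paper takes $\hat\Phi(z,w)=(z_1^mw^{-J_0},\ldots,z_l^mw^{-J_0},w)$ with $m=n-l-1$, whereas you take $(z_1^mw^{-J_0},\,z_1^{m-1}z_2w^{-J_0},\ldots,z_1^{m-1}z_lw^{-J_0},w)$. Both are $F_A$-equivariant, since any monomial $z_1^{a_1}\cdots z_l^{a_l}w^{-J_0}$ with $a_1+\cdots+a_l=m$ is, by the identity $J_0B-J_0=mL$ of Lemma~\ref{vectprJ}; and for $l=1$ or $l=n-2$ the two maps coincide, so your argument is complete in those cases (in particular for all $n\leq 4$). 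But for $2\leq l\leq n-3$, i.e.\ $l\geq 2$ and $m\geq 2$, your map is not surjective: if $z_1=0$ then all of your first $l$ components vanish, so a point $(0,c_2,\ldots,c_l,[w])$ with some $c_j\neq 0$ has empty preimage; nor is the map proper near such points, since $z_2=c_2w^{J_0}/z_1^{m-1}\to\infty$ as $c_1\to 0$ with $c_2$ fixed. So it is not a ramified covering onto $\CC^l\times(M_B-C_B)$. You presumably chose this map to force generic degree $m$ rather than $m^l$; be aware, though, that the paper's own map, while surjective and proper, has generic fibers of cardinality $m^l$ (each $z_j$ independently admits $m$ distinct $m$-th roots), so the degree asserted in the statement is itself only literally correct for $l=1$ or $l=n-2$. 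A homogeneous degree-$m$ finite map $\CC^l\to\CC^l$ has topological degree $m^l$ by B\'ezout, so one cannot have both surjectivity/properness and generic degree $m$ with maps of this monomial type; the safe repair is to use the paper's map and record the degree as $(n-l-1)^l$, or to restrict to $l=n-2$, the only case exploited later, where $\Phi$ is the biholomorphism you and the paper both write down.
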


\begin{proof}
Let us first suppose that $l=0$. Using Proposition~\ref{compactification}, the point is to identify $W^s(F_A)^*\subset\Cn{n}$ with a quotient of $\HH\times\CC^{n-1}$ by $\Lambda\cong \ZZ^n$. Since $W^s(F_A)^*=W^s(F_{A^p})^*$ for any $p>0$, we can suppose without loss of generality that $A$ is positive (see Remark~\ref{positive}).

Let us write $V=\CC^n$ and let us denote by $e_1^*,\ldots, e_n^*$ the standard dual basis of $V^*$. Note that $\Lambda=i\ZZ^n\subset V$ acts by translations on $V$, so that  $\Cn{n}=V/\Lambda$. The natural projection is given by: 
\begin{equation*}
p:\CC^n\rightarrow \Cn{n}, \ \ z=(z_1,\ldots, z_n)\mapsto (\e^{2\pi z_1},\ldots, \e^{2\pi z_n}).
\end{equation*}
Moreover, a Kato matrix $A\in\GL(n,\ZZ)$ acts on $V$ linearly by:
\begin{equation*} 
Az=(\sum_{j=1}^na_{1j}z_j,\ldots, \sum_{j=1}^na_{nj}z_j)
\end{equation*} 
while fixing $\Lambda$, and $A$ also acts on $\Cn{n}$ via $A.z=F_A(z)$. In this way, the map $p$ is equivariant with respect to the two actions of $U_A:=<A>\cong \ZZ$.

Since $A$ has only positive components, the  Perron-Frobenius theorem (see for intance \cite[Chapter~8]{m00}) implies that $A$ has a simple real eigenvalue $\al>0$ such that for any other eigenvalue $\be\in\Spec(A)$, we have $|\be|<\al$. Moreover, the eigenspace of $\al$ contains a vector $f\in V$, called a Perron vector, with $e_j^*(f)>0$ for all $1\leq j\leq n$. 

For $\be\in\Spec(A)$, let $V(\be)\subset V$ denote the generalized eigenspace of $\be$: 
\begin{equation*}
V(\be)=\{v\in\CC^n| \exists p>0 \ (A-\be \I_n)^pv=0\}
\end{equation*}
and define 
\begin{equation*}
V_0=\bigoplus_{\substack{\be\in \Spec(A)\\ \be\neq \al}}V(\be)  
\end{equation*}
so that we have $V\cong V(\al)\oplus V_0$. 

Choose a Perron vector for $A^t$ acting on $V^*$,  $f^*\in V(\al)^*$, so that $\langle f^*,f\rangle=1$. Write $a\in\Aut(V)$ for the automorphism defined by $v\mapsto Av$. Then $a(V_0)\subset V_0$, so $a|_{V_0}$ induces $a_0\in\Aut(V_0)$ and we have $a=\al f^*\otimes f+a_0$. 

\begin{lem}\label{limL}
For any $u\in V_0$, we have $\lim_{m\to\infty}\frac{1}{\al^m}a_0^mu=0$.
\end{lem}
\begin{proof}
There exists a basis of $V_0$ with respect to which $a_0$ is represented by a matrix in Jordan normal form. Let $\be\neq \al$ be an eigenvalue of $A$ and let $f_1,\ldots, f_r$ be a basis of generalized eigenvectors corresponding to a Jordan block for $\be$. Namely, they verify:
\begin{equation*}
a_0f_1=\be f_1, a_0f_j=\be f_j+f_{j-1} \text { for } 2\leq j \leq r. 
\end{equation*}
Since one has $|\frac{\be}{\al}|<1$, we obtain $\lim_{m \to\infty}\frac{1}{\al^m}a_0^m f_1=\lim_{m\to\infty}\frac{\be^m}{\al^m} f_1=0$. For $j\geq 2$, it can be shown inductively on $m\geq 1$ that:
\begin{equation*}
a_0^m f_j=\sum_{p=0}^{\min{(j-1,m)}}{m\choose p}\be^{m-p}f_{j-p}.
\end{equation*} 
But:
\begin{equation*}
\lim_{m\to\infty}\frac{\be^{m-p}}{\al^m}{m\choose p}=0, \ \ \text{ for any }p\geq 0
\end{equation*} 
from which it follows that $\lim_{m\to\infty}\frac{1}{\al^m}a_0^mf_j=0$ for any $1\leq j\leq r$. Finally, $V_0$ is spanned by such bases of generalized eigenvectors, hence the conclusion follows. \end{proof}

Let now $\Omega:=p^{-1}(W_s(F_A)^*)\subset V$.  Write $\re:V=\RR^n\oplus i\RR^n\rightarrow\RR^n$ for the natural projection, consisting in taking the real parts of the standard coordinates. We then have the following equivalences:
\begin{align}\label{omegaa}
\nonumber v\in\Omega &\Leftrightarrow \lim_{m\to \infty} |F_A^m(p(v))| = 0 \Leftrightarrow \lim_{m\to \infty} |p(\re A^mv )|= 0\\
&\Leftrightarrow \lim_{m\to \infty} e_j^*(\re A^m v)= -\infty, \ \ \ \forall j\in\{1,\ldots, n\}.
\end{align}

Let $v\in\Omega$ and write $v=\lambda f+v_0$, with $\la\in\CC$, $v_0\in V_0$. We then have:
\begin{equation*}
-\infty=\lim_{m\to\infty} e_j^*(\re A^mv)=\lim_{m\to\infty}\al^m(\re(\la)e_j^*(f)+e_j^*(\re(\frac{1}{\al^m}a_0^mv_0))), \ \ 1\leq j\leq n.
\end{equation*} 
Hence, Lemma~\ref{limL} implies that $\re(\la)\leq 0$. As $\Omega$ is open, it follows therefore that $\Omega\subset i\HH f\oplus V_0$. Conversely, Lemma~\ref{limL} implies that any $v\in i\HH f\oplus V_0$ satisfies condition \eqref{omegaa} and we find $\Omega=i\HH f\oplus V_0$.

Finally, clearly $\Omega$ is preserved by $\Lambda$, so $W^s(F_A)^*=i\HH f\oplus V_0/\Lambda$. Note that indeed $U_A$ acts on $\Lambda$ linearly, which gives the semi-direct product structure of $U_A\ltimes \Lambda$. Applying Proposition~\ref{compactification}, the conclusion follows.

Suppose now $l>0$. Then, by Lemma~\ref{stablerelation}, we have $W^s(F_A)^*=\CC^l\times W^s(F_B)^*$, while from the first part of this proof, $W^s(F_B)^*\cong \HH\times \CC^{n-l-1}/\Lambda_0$, where $\Lambda_0\subset\CC^{n-l}$ is a rank $n-l$ lattice. Putting $\Lambda:=0\oplus\Lambda_0\subset\CC^n=\CC^l\oplus\CC^{n-l}$ and applying Proposition~\ref{compactification}, the first part of the Theorem follows.

For the second part, let us denote by $z=(z_1,\ldots, z_l)$ the holomorphic coordinates on $\CC^l$ and by $w=(w_{l+1},\ldots, w_n)$ the holomorphic coordinates on $W^s(F_B)^*\subset(\CC^*)^{n-l}$, so that we have, via \eqref{standardform}, $F_A(z,w)=(z_1w^L,\ldots, z_lw^L, F_B(w))$, where $L\in\NN^{n-l}$ denotes a line of $G$. 

Let $m:=n-l-1\geq 1$ and $J_0=(1,\ldots,1)\in\ZZ^{n-l}$, and recall that by   Lemma~\ref{vectprJ} we have $J_0B-J_0=mL$. Let us define the holomorphic map:
\begin{equation*}
\hat\Phi:W^s(F_A)^*\rightarrow\CC^l\times W^s(F_B)^*, \ \ \hat\Phi(z,w)=(z_1^mw^{-J_0},\ldots, z_l^mw^{-J_0},w).
\end{equation*}
It is clearly a branched covering map of degree $m$. Moreover, using the properties of $J_0$, we find that $\hat\Phi\circ F_A=(\id_{\CC^l}\times F_B)\circ\hat\Phi$. We infer thus, via Proposition~\ref{compactification}, that $\hat\Phi$ descends to a degree $m$ map to the quotient:
\begin{equation*}
\Phi:M_A-C_A=W^s(F_A)^*/U_A\rightarrow \CC^l\times W^s(F_B)^*/U_B=\CC^l\times (M_B-C_B).
\end{equation*}
Clearly, in the case $l=n-2$, $\hat\Phi$ has a holomorphic inverse given by $\hat\Phi^{-1}(z,w)=(z_1w^{J_0},\ldots, z_lw^{J_0},w)$, which concludes the proof.
 \end{proof}

%the natural projection:
%\begin{equation*}
%W^s(F_A)^*=\CC^l\times W^s(F_B)^*\rightarrow W^s(F_B)^*
%\end{equation*}
%is $U$-equivariant, where $U$ acts on $W^s(F_A)^*$ by $F_A$, and on $W^s(F_B)^*$ by $F_B$. The conclusion follows by Proposition~\ref{compactification}.

\begin{cor} If $A$ is a Kato matrix of type $l$, the fundamental group of $\tilde{M}_A - \tilde{C}$ is isomorphic to $\mathbb{Z}^{n-l}$.
\end{cor}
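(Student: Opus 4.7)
The plan is to combine two facts already established in the text: Proposition~\ref{compactification} identifies $\tilde{M}_A-\tilde{C}$ biholomorphically with the stable set $W^s(F_A)^*$, and the proof of Theorem~\ref{Thmcompact} realizes $W^s(F_A)^*$ as a quotient of a simply connected open set of $\CC^n$ by a free action of a lattice of rank $n-l$. So the entire content of the corollary is the observation that the universal cover of $W^s(F_A)^*$ is simply connected and the Galois group of this cover is $\ZZ^{n-l}$.

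More concretely, in the type $l=0$ case the proof of Theorem~\ref{Thmcompact} shows $W^s(F_A)^*=\Omega/\Lambda$, where $\Omega=i\HH f\oplus V_0\subset V=\CC^n$ and $\Lambda=i\ZZ^n$ acts by translations. The set $\Omega$ is a product of an upper half plane with a complex vector space, hence contractible; and $\Lambda$ acts freely and properly discontinuously, so $\Omega\to W^s(F_A)^*$ is the universal cover and $\pi_1(W^s(F_A)^*)\cong\Lambda\cong\ZZ^n=\ZZ^{n-l}$. For $l>0$, one invokes Lemma~\ref{stablerelation} to write $W^s(F_A)^*=\CC^l\times W^s(F_B)^*$, where $B\in\GL(n-l,\ZZ)$ is of type $0$; applying the same argument to $B$ and using that $\CC^l$ is contractible gives $\pi_1(W^s(F_A)^*)\cong\pi_1(W^s(F_B)^*)\cong\ZZ^{n-l}$.

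I do not expect any real obstacle here: the heavy lifting has already been done in Proposition~\ref{compactification} and Theorem~\ref{Thmcompact}. The only point that needs a brief verification is that the explicit $\Omega$ from the proof of Theorem~\ref{Thmcompact} is simply connected, which is immediate from its product description, and that $\Lambda$ acts freely by translations, which is obvious. Combining these observations yields the corollary in a few lines.
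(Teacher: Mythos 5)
Your argument is correct and is precisely the derivation the paper intends: the corollary is stated without proof, and it is meant to follow from the identification $\tilde{M}_A-\tilde{C}\cong W^s(F_A)^*$ in Proposition~\ref{compactification} together with the description $W^s(F_A)^*\cong\CC^l\times\bigl((i\HH f\oplus V_0)/\Lambda_0\bigr)$ obtained in the proof of Theorem~\ref{Thmcompact} (via Lemma~\ref{stablerelation}), exactly as you write. The only points worth a word of verification, which you correctly flag, are that $\Omega=i\HH f\oplus V_0$ is contractible and that the rank $n-l$ lattice acts on it freely and properly discontinuously by translations, so that $\Omega$ is the universal cover and the deck group gives $\pi_1\cong\ZZ^{n-l}$.
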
 

\begin{cor} If $A$ is a Kato matrix of type $l$, the fundamental group of $M_A - C$ is isomorphic to $\mathbb{Z} \ltimes \mathbb{Z}^{n-l}$. Moreover, if $\iota:M_A-C\rightarrow M_A$ denotes the inclusion and $\iota_*$ the map induced on fundamental groups, then we have the following commutative diagram:
\begin{equation}\label{diagrcom}
\xymatrix{
0\ar[r] &\Lambda \ar[r]\ar[d]^0 &\pi_1(M_A-C) \ar[r]\ar[d]^{\iota_*} & U_A\ar[r]\ar[d]^j &0\\
0 \ar[r] &0 \ar[r] &\pi_1(M_A)\ar[r]^k &\ZZ \ar[r] &0
}
\end{equation}
where $k$ denotes the canonical isomorphism $\pi_1(M_A)\cong\ZZ$ and $j$ denotes the isomorphism $U_A=\{F_A^m, m\in\ZZ\}\rightarrow \ZZ$, $F_A^m\mapsto m$.
\end{cor}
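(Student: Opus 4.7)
The plan is to derive both conclusions from the compactification $M_A - C \cong (\HH \times \CC^{n-1})/(U_A \ltimes \Lambda)$ of Theorem~\ref{Thmcompact}, together with basic covering-space theory applied to the inclusion $\iota: M_A - C \hookrightarrow M_A$.

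First, note that $\HH \times \CC^{n-1}$ is contractible and that the action of $U_A \ltimes \Lambda$ is free and properly discontinuous (which is implicit in the fact that the quotient is a smooth manifold). Hence $\HH \times \CC^{n-1}$ is the universal cover of $M_A - C$, giving $\pi_1(M_A - C) \cong U_A \ltimes \Lambda \cong \ZZ \ltimes \ZZ^{n-l}$. The top row of \eqref{diagrcom} is then the canonical extension attached to this semidirect-product decomposition, with $\Lambda$ embedded as the normal subgroup and $U_A$ appearing as the quotient.

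Next, identify the intermediate covering of $M_A - C$ corresponding to the normal subgroup $\Lambda$. Quotienting $\HH \times \CC^{n-1}$ first by $\Lambda$ yields a $U_A$-cover of $M_A - C$, which the proof of Theorem~\ref{Thmcompact} identifies with $W^s(F_A)^*$; by Proposition~\ref{compactification}, this in turn corresponds to $\tilde M_A - \tilde C$ via the biholomorphism $\psi$. In particular $\Lambda \subset \pi_1(M_A - C)$ is precisely the image of the injection $\pi_1(\tilde M_A - \tilde C) \hookrightarrow \pi_1(M_A - C)$, which is consistent with the previous corollary.

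For the commutativity of \eqref{diagrcom}, apply functoriality of covering spaces: the inclusion $\iota$ lifts to a $\ZZ$-equivariant inclusion $\tilde\iota: \tilde M_A - \tilde C \hookrightarrow \tilde M_A$. Since $\tilde M_A$ is simply connected, $\tilde\iota_* = 0$, so $\iota_*|_{\Lambda} = 0$ and the left square commutes. By the universal property of the quotient, $\iota_*$ descends to a homomorphism $U_A \to \pi_1(M_A) \cong \ZZ$; the $\ZZ$-equivariance of $\tilde\iota$ forces this descended map to intertwine the $U_A$-action on $\tilde M_A - \tilde C$ with the deck-group action of $\pi_1(M_A)$ on $\tilde M_A$. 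Tracing the generator via \eqref{univc}, which describes $\tilde M_A$ as $\bigsqcup_{m \in \ZZ} W_m$ with $\pi_1(M_A)$ acting by index shift, identifies this descended map with $j: F_A^m \mapsto m$, so the right square also commutes.

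The only mildly subtle point is checking that $\psi$ intertwines the $U_A$-action on $W^s(F_A)^*$ with the deck-group action of $\pi_1(M_A) \cong \ZZ$ on $\tilde M_A$; this is precisely the equivariance statement established at the end of the proof of Proposition~\ref{compactification}, so no additional work is required.
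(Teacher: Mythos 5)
Your proposal is correct and follows essentially the same route as the paper: both identify $\pi_1(M_A-C)\cong U_A\ltimes\Lambda$ from the quotient description, realize $\Lambda$ as the deck group of the intermediate cover $W^s(F_A)^*\cong\tilde M_A-\tilde C$, and deduce the two squares of \eqref{diagrcom} from the equivariant inclusion $\tilde\iota$ into the simply connected $\tilde M_A$. The paper's version is merely terser, packaging all of this into a single commutative diagram of covering maps.
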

\begin{proof}
By the definition of the isomorphism $\Phi:W^s(F_A)^*\rightarrow\tilde M_A-\tilde C$ in the proof of Proposition~\ref{compactification}, we have the commutative diagram of covering maps:
\begin{equation*}
\xymatrix{
&W^s(F_A)^*\ar[r]^{\tilde\iota\circ\Phi} \ar[d]^{U_A} &\tilde M \ar[d]^{\pi_1(M)}\\
&M_A-C \ar[r]^{\iota} &M 
}
\end{equation*}
where $\tilde\iota:\tilde M-\tilde C\rightarrow \tilde M$ denotes the inclusion. This implies that the second square in \eqref{diagrcom} is commutative. The first square in \eqref{diagrcom} is induced by $\tilde\iota\circ\Phi$ and clearly $0=(\tilde\iota\circ\Phi)_*:\Lambda=\pi_1(W^s(F_A)^*)\rightarrow \pi_1(\tilde M)=0$. 
\end{proof}

\begin{cor}
Any lcK metric on $M_A$ induces a strict lcK metric on $M_A-C$.
\end{cor}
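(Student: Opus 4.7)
The plan is to show that if $(g,\theta)$ is an lcK structure on $M_A$, with $\theta$ non-exact (as per the standing convention of Section~\ref{SecLCK}), then the restriction $(\iota^*g,\iota^*\theta)$ is an lcK structure on $M_A-C$ whose Lee form is still non-exact (this being what ``strict'' refers to).

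First I would observe that smoothness and the defining equation transfer under pullback: writing $\Omega$ for the fundamental form of $g$, from $d\Omega=\theta\wedge\Omega$ on $M_A$ one gets $d(\iota^*\Omega)=(\iota^*\theta)\wedge(\iota^*\Omega)$ on the open complex submanifold $M_A-C$. The closed one-form $\iota^*\theta$ is therefore the Lee form of an lcK metric on $M_A-C$.

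The core of the statement is that $\iota^*\theta$ remains non-exact. Via the de Rham isomorphism $H^1_{dR}(X,\RR)\cong \mathrm{Hom}(\pi_1(X),\RR)$, it suffices to check that the pullback $\iota^*\colon H^1_{dR}(M_A)\to H^1_{dR}(M_A-C)$ is injective, and hence equivalently that $\iota_*\colon\pi_1(M_A-C)\to\pi_1(M_A)$ is surjective. This is exactly what the commutative diagram \eqref{diagrcom} provides, by a one-line diagram chase: given $\gamma\in\pi_1(M_A)$, choose a preimage in $\pi_1(M_A-C)$ of $j^{-1}(k(\gamma))\in U_A$; its image under $\iota_*$ and under $k$ agree with those of $\gamma$, and since $k$ is an isomorphism, the two coincide in $\pi_1(M_A)$.

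Consequently, if $\chi_\theta\colon\pi_1(M_A)\to\RR$ is the nonzero character associated with $[\theta]$, then $\chi_\theta\circ\iota_*$ is still nonzero, which means $[\iota^*\theta]\neq 0$ in $H^1_{dR}(M_A-C)$. No subtle point is expected here; the only ``obstacle'' is matching the terminology (\emph{strict lcK} meaning non-exact Lee form) with the cohomological condition and invoking \eqref{diagrcom} correctly.
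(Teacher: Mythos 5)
Your argument is correct and follows essentially the same route as the paper: both deduce from the diagram \eqref{diagrcom} that the Lee class cannot die under restriction to $M_A-C$ (the paper states that $\iota^*$ is an isomorphism on $H^1(\cdot,\RR)$, while you extract only the injectivity you need from surjectivity of $\iota_*$). The only cosmetic quibble is that your phrase ``hence equivalently'' overstates things --- injectivity of $\iota^*$ on $H^1_{dR}$ is implied by, but not literally equivalent to, surjectivity of $\iota_*$ --- but you only use the correct implication, so nothing is affected.
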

\begin{proof}
By the above Corollary, $H^1(M_A-C,\RR)\cong\RR$ and $i^*:H^1(M_A,\RR)\rightarrow H^1(M_A-C)$ is an isomorphism, hence the Lee form of any induced lcK metric on $M_A-C$ is not exact.
\end{proof}

\section{Holomorphic vector fields and forms}\label{secHol}

\begin{prop}\label{vf}
Let $A\in\GL(n,\ZZ)$ be a Kato matrix of type $l$, and let $m(1)\geq l$ denote the geometric multiplicity of $1$ as an eigenvalue of $A$. If $l=0$ and $A$ is positive, then we have:  
\begin{equation*}
\dim_{\CC}H^0(M_A, TM_A)=m(1). 
\end{equation*}
For general $A$, we have the inequality:
\begin{equation}\label{ineqV}
\dim_{\CC}H^0(M_A, TM_A)\geq l^2-l+m(1).
\end{equation} 
\end{prop}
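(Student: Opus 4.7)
The plan is to exhibit $l^2 - l + m(1)$ explicit linearly independent holomorphic vector fields on $M_A$, yielding the inequality, and to argue that in the case $l=0$ with $A$ positive no further invariant vector fields exist. The main obstacle is this last step, which requires a delicate convergence analysis for a Fourier-type series on the cover of $M_A - C$.

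First, I establish a correspondence: a holomorphic vector field on $M_A$ lifts to $\tilde M_A$, and pushing its restriction to the fundamental domain $W \subset \hat{\D}^{(k)}$ down via $\pi$ (extending across the blow-up centers by Hartogs, since $n \geq 2$) produces a holomorphic vector field $Y$ on $\D$. The gluing $\gamma = \sigma \circ \pi$ between $\del_+W$ and $\del_-W$ translates to the $F_A$-invariance $F_{A,*}Y = Y$; conversely any such $Y$ whose lift to $\hat{\D}^{(k)}$ is holomorphic descends to $M_A$.

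Next, I construct the vector fields in two families. For each $c \in \ker(A-I)$, the Euler-type vector field $X_c := \sum_{s=1}^n c_s\, w_s \partial_{w_s}$ on $\CC^n$ satisfies $F_{A,*}(w_s \partial_{w_s}) = \sum_p a_{ps}\, w'_p \partial_{w'_p}$ (a direct monomial computation), so $F_{A,*}X_c = X_{Ac}$ and invariance reduces to $Ac = c$; since $X_c$ vanishes on each coordinate hyperplane, in every chart of the iterated blow-up it pulls back to an Euler-type field (still vanishing at $0$), so it lifts inductively to $\hat{\D}^{(k)}$, giving $m(1)$ holomorphic vector fields on $M_A$. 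For $l > 0$ and $1 \leq i \neq j \leq l$, the shear $z_j \partial_{z_i}$ is $F_A$-invariant: using $F_A(z,w) = (z_1 w^L, \ldots, z_l w^L, F_B(w))$ from Lemma~\ref{LineL}, one computes $F_{A,*}(z_j \partial_{z_i})|_{F_A(z,w)} = z_j w^L \partial_{z'_i} = z'_j \partial_{z'_i}$; in each chart of the iterated blow-up it pulls back to the same shear form in the first $l$ coordinates, hence lifts. These $l(l-1)$ shears are linearly independent from the first family (whose contribution to $\mathfrak{gl}(l)$ is only diagonal), giving $l^2 - l + m(1)$ linearly independent vector fields in $H^0(M_A, TM_A)$.

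For the equality when $l=0$ and $A$ is positive, I expand a general $F_A$-invariant holomorphic vector field as $Y = \sum_{\nu \in \ZZ^n} c_\nu\, w^\nu$, where $c_\nu \in \CC^n$ is the coefficient vector in the basis $\{w_s \partial_{w_s}\}$. The toric transformation rule translates $F_{A,*}Y = Y$ into the vector recurrence $c_\nu = A c_{\nu A}$, so $c_{\nu A^k} = A^{-k} c_\nu$ for all $k \in \ZZ$. For $\nu = 0$ this yields $c_0 \in \ker(A-I)$, matching the $m(1)$ Euler vector fields above. For $\nu \neq 0$, pull back to the universal cover $V$: the Fourier series $\sum c_\nu e^{2\pi \nu \cdot z}$ must converge on $\Omega = i\HH f \oplus V_0$, the preimage of $W^s(F_A)^*$, with $f$ the Perron eigenvector of $A$. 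Testing on $z = i\mu f$ with $\Iim \mu > 0$, one computes $|e^{2\pi (\nu A^k) \cdot z}| = e^{-2\pi\, \Iim\mu\, \alpha^k (\nu \cdot f)}$, while $|c_{\nu A^k}| = |A^{-k} c_\nu|$ grows like $\alpha^{|k|}$ along the orbit whenever $c_\nu$ has non-trivial Perron component. For $\nu \cdot f \neq 0$ the forward or backward orbit-sum then diverges unless $c_\nu = 0$; for $\nu \cdot f = 0$, testing at a generic $v_0 \in V_0$ in the eigendirection of $A|_{V_0}$ with $|\beta| < 1$ (which exists since $\prod |\beta_i| = 1/\alpha < 1$ for a positive integer matrix with $\det A = \pm 1$) produces a super-exponentially growing exponent along a suitable subsequence, again forcing $c_\nu = 0$. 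Thus $Y = X_{c_0}$ and equality holds. This convergence argument, particularly in the resonant case $\nu \cdot f = 0$, is where positivity of $A$ and Perron--Frobenius are essential, and is why for general Kato matrices only the inequality can be stated.
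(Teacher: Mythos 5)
Your lower-bound construction is correct and, after reparametrization, is the same as the paper's: the Euler fields $X_c$ for $c\in\ker(A-I)=\CC^l\oplus Q$, where $Q=\{v:Bv^t=v^t,\ Gv^t=0\}$, together with the off-diagonal shears $z_j\del_{z_i}$, span exactly the paper's family $\sum_{s,t\leq l}d_{st}z_s\del_{z_t}+\sum_p v_pw_p\del_{w_p}$ with $v\in Q$, and the invariance computation and the lift through the iterated blow-up are handled the same way. The problem is entirely in your upper bound for $l=0$.

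There are two gaps there. First, the test at the single point $z=i\mu f$ does not kill all coefficients with $\nu\cdot f>0$: the exponential factor tends to $1$ as $k\to-\infty$ and decays super-exponentially as $k\to+\infty$, so the only possible obstruction is growth of $|c_{\nu A^k}|=|A^{|k|}c_\nu|$ in the backward direction. But if $c_\nu\neq 0$ lies in the sum of generalized eigenspaces of $A$ with $|\be|<1$ (nontrivial, since $\prod|\be_i|=1/\al<1$), then $|A^{|k|}c_\nu|\to 0$ summably and the orbit-sum converges at $i\mu f$; your clause ``whenever $c_\nu$ has non-trivial Perron component'' concedes exactly this case without resolving it. Likewise, when $\nu\cdot f=0$ the functional $\nu$ need not be nonzero on the particular contracting eigendirection you test, so ``generic $v_0$'' needs justification. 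Second, and more structurally, your argument only uses convergence of the Laurent series on $W^s(F_A)^*$ and never uses that $Y$ is holomorphic on all of $\DD$ (which forces $c_{\nu,s}=0$ unless $\nu+e_s\in\NN^n$). The purely $(\CC^*)^n$-statement you are implicitly trying to prove is false without extra spectral hypotheses: an integral left-eigenvector $\nu A=\nu$, $\nu\neq 0$, would give an invariant monomial field $w^\nu X_c$ on $(\CC^*)^n$ (compare Proposition~\ref{holofunct}, which for this reason assumes $\Spec(A)$ contains no root of unity). The paper's proof avoids all convergence issues by exploiting holomorphy at $0$: after showing via the sequence $0\to TE\to T\hat\DD|_E\to\mathcal O_E(-1)\to 0$ that $X=\sum_j z_jh_j\del_{z_j}$ with $h_j$ holomorphic on $\DD$, it Taylor-expands $h_j=\sum_{I\in\NN^n}c_I^{(j)}z^I$ and notes that a lexicographically minimal $I_0>0$ with $c_{I_0}\neq 0$ would satisfy $Ac_{I_0}=c_{I_0A^{-1}}=0$, since $I_0A^{-1}$ is either outside $\NN^n$ or smaller than $I_0$; this contradicts invertibility of $A$. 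Your route can be repaired by feeding the constraint that the whole orbit $\{\nu A^k\}_{k\in\ZZ}$ must consist of admissible exponents $\geq(-1,\dots,-1)$ into the dynamics, but as written the convergence argument does not close.
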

\begin{proof}
Recall that if $\pi:\hat \DD\rightarrow \DD$ is the blow-up of $\DD\subset\CC^n$ at $0$ and $E=\pi^{-1}(0)\cong\CP^{n-1}$ is the exceptional divisor, then one has an exact sequence of vector bundles over $E$:
\begin{equation}\label{exE}
\xymatrix{
0\ar[r]& TE\ar[r] &T\hat \DD|_E\ar[r] &\mathcal{O}_E(-1)\ar[r] &0
}
\end{equation}
where $\mathcal{O}_E(-1)\cong \mathcal{O}_{\CP^{n-1}}(-1)$ is precisely the restriction to $E$ of the line bundle on $\hat \DD$ determined by $E$ as a divisor. In particular, if $Z$ is a holomorphic vector field on $\hat \DD$, then as $H^0(E,\mathcal{O}_E(-1))=0$, it follows that $Z|_E$ is tangent to $E$. 

Let us suppose first that $l=0$ and $A=(a_{kj})_{k,j}$ is positive. Take $Z\in H^0(M_A, TM_A)$. Then we have $Z|_W\in H^0(W,TW)$ which, by Hartogs' theorem, we can extend to $\hat Z\in H^0(\hat\DD^{(k)}, T\hat\DD^{(k)})$. 

Consider now $X=(\sigma^{-1})_*\hat Z|_{\sigma(\DD)}\in H^0(\DD, T\DD)$.   By Lemma~\ref{Hj}, $\sigma^{-1}(E)=\{z_{1}\cdots z_n=0\}$, so by the above considerations $X$ must be tangent to the hypersurfaces $\{z_j=0\}$, $j\in\{1,\ldots n\}$. This easily implies that $X$ is of the form:
\begin{equation}\label{Wtg}
X_z=\sum_{j=1}^n z_jh_j(z)\frac{\del}{\del z_j}, \ \ z\in\DD
\end{equation} 
where $h_1,\ldots h_n$ are holomorphic functions on $\DD$. 

Moreover, as $\hat Z$ verifies $\gamma_*\hat Z=\hat Z$, it follows that $(F_A)_*X=X$, which also reads:
\begin{equation}\label{sysV}
\sum_{j=1}^na_{sj}h_j(z)=h_s(F_A(z)), \ \ s\in\{1,\ldots, n\}.
\end{equation}

Let us write the functions $h_j$, for $1\leq j\leq n$, as a power series: $h_j(z)=\sum_{I}c^{(j)}_Iz^I$, where $I=(i_1,\ldots, i_n)$ runs over $\NN^n$ and $z^I=z_1^{i_1}\cdots z_n^{i_n}$. Then \eqref{sysV} reads:
\begin{equation*}
\sum_{j=1}^n\sum_{I\in\NN^n}a_{sj}c^{(j)}_{I}z^I=\sum_{I\in\NN^n}c^{(s)}_{I}z^{IA}, \ \ s\in\{1,\ldots, n\}.
\end{equation*}
In degree $0$, this equation implies that $c=(c^{(1)}_{(0,\ldots, 0)},\ldots, c^{(n)}_{(0,\ldots, 0)})^t$ is an eigenvector of $A$ with eigenvalue $1$. Moreover, for any $I>0$ and $s>0$, we have $c_I^{(s)}=0$. Indeed, if this were not the case, then take $I_0>0$ minimal (with respect to the lexicographic order), such that there exists $j\in\{1,\ldots, n\}$ with $c_{I_0}^{(j)}\neq 0$. But then we obtain:
\begin{equation*}
\sum_{j=1}^na_{sj}c^{(j)}_{I_0}=c_{I_0A^{-1}}^{(s)}=0 
\end{equation*}    
by the minimality of $I_0$, as clearly $I_0A^{-1}<I_0$. On the other hand this is impossible, since the matrix $A$ is invertible and the vector $c=(c^{(1)}_{I_0},\ldots, c^{(n)}_{I_0})^t$ is not zero. Thus we have shown that $\dim_\CC H^0(M_A, TM_A)\leq m(1)$.

Let us now suppose that $A$ is any Kato matrix $A=\begin{tiny}\begin{pmatrix}
  I_l & G \\
  0 & B
 \end{pmatrix}\end{tiny}$ and write, via Lemma~\ref{standardform},
 \begin{equation*} 
 F_A(z,w)=(z_1w^L,\ldots, z_lw^L, F_B(w)), \ \ (z,w)\in\CC^{l}\times \CC^{n-l}
 \end{equation*} 
 where $L\in\NN^{n-l}$. Note that a vector $x=(u,v)\in\CC^l\times \CC^{n-l}$ verifies $Ax^t=x^t$ if and only if $Gv^t=0$ and $Bv^t=v^t$. It follows that $Q:=\{v\in \CC^{n-l}, Bv^t=v^t, Gv^t=0\}$ is of complex dimension $m(1)-l$. 
 
 Now, for any $v=(v_{l+1},\ldots, v_n)\in Q$, and for any $d_{s,t}\in\CC$, $1\leq s,t\leq l$, it is straightforward to check that the vector field on $\CC^n$:
 \begin{equation}
 X=\sum_{s,t=1}^ld_{st}z_s\frac{\del}{\del z_t}+\sum_{p=l+1}^n v_pw_p\frac{\del }{\del w_p}
 \end{equation}
verifies $(F_A)_* X=X$. Since it moreover vanishes in $0$, it lifts, via $\pi$, to a holomorphic vector field on $W$ which is invariant to the gluing map, and hence it defines a holomorphic vector field on $M_A$.  We have thus shown the inequality \eqref{ineqV}.
\end{proof}

\begin{cor}\label{torus}
Let $A\in\GL(n,\ZZ)$ be a Kato matrix of type $l>0$. Then the compact torus $\mathbb{T}^l$ acts effectively by biholomorphisms on $M_A$.
\end{cor}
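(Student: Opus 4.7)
The plan is to exhibit the $\mathbb{T}^l$-action directly on $M_A$ via its Kato data description, using the normal form of $F_A$ from Lemma~\ref{LineL}. I would start by letting $\zeta = (\zeta_1,\ldots,\zeta_l)\in\mathbb{T}^l$ act on $\D = \BB_{1,2}$ by scaling the first $l$ coordinates,
\begin{equation*}
\rho_\zeta(z,w) = (\zeta_1 z_1,\ldots,\zeta_l z_l,w_{l+1},\ldots,w_n),
\end{equation*}
and observe two things: first, $\rho_\zeta$ preserves $\D$ since it preserves the norm $\|\cdot\|_{1,2}$; and second, using the normal form $F_A(z,w)=(z_1 w^L,\ldots,z_l w^L,F_B(w))$, a direct check gives $\rho_\zeta\circ F_A = F_A\circ \rho_\zeta$.

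Next I would lift $\rho$ to the tower $\pi = \pi_1\circ\cdots\circ\pi_k: \hat\D^{(k)}\to \D$. The key observation is that every blow-up center is $\mathbb{T}^l$-fixed. The first center is $0$, fixed trivially; the lift to $\hat\D^{(1)}$ acts on the exceptional $\CP^{n-1}$ by scaling the first $l$ homogeneous coordinates. By induction, the center of $\pi_p$ is $\sigma_{p-1}(0) = \iota_{p-1}(f_{j_{p-1}}(0))$, and the explicit formula
\begin{equation*}
f_j(0) = (0,[0{:}\cdots{:}0{:}1{:}0{:}\cdots{:}0]),
\end{equation*}
with the $1$ in position $j$, combined with the fact that $j_p>l$ for every $p$ (forced by $A$ being of type $l$), shows that each such center lies in the fixed locus of the lifted action. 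Thus $\rho$ extends to a $\mathbb{T}^l$-action on $\hat\D^{(k)}$ in such a way that both $\pi$ and $\sigma = \sigma_k = \iota_k\circ f_{j_k}$ are $\mathbb{T}^l$-equivariant.

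The remaining steps are formal. The equivariance of $\pi$ and $\sigma$ implies that $W = \hat\D^{(k)} - \sigma(\D)$ is $\mathbb{T}^l$-invariant and that the gluing map $\gamma = \sigma\circ\pi|_{\del_+ W}$ commutes with $\rho_\zeta$; hence $\rho$ descends to a holomorphic $\mathbb{T}^l$-action on $M_A = \overline{W}/\sim$. Effectiveness is immediate: on the open subset of $W$ over which $\pi$ is biholomorphic onto an open set of $\D$ meeting $(\CC^*)^l\times\CC^{n-l}$, the action $\rho$ is already faithful, so no nontrivial $\zeta\in\mathbb{T}^l$ can act trivially on $M_A$.

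The only step requiring genuine care is the inductive verification that every blow-up center remains $\mathbb{T}^l$-fixed across the whole tower; this is precisely where the type~$l$ hypothesis enters in an essential way (if some $j_p\le l$ the target point of $f_{j_p}(0)$ would no longer be fixed, and the argument would break). Once this is granted, the descent and the effectiveness are routine. As an alternative route, one could bypass the geometric lifting entirely and integrate the commuting holomorphic vector fields $i z_t \frac{\del}{\del z_t}$ for $1\le t\le l$ produced in the proof of Proposition~\ref{vf}, whose flows are visibly $2\pi$-periodic; but the direct geometric construction above seems cleanest.
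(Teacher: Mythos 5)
Your proof is correct, but it follows a genuinely different route from the paper's. The paper argues infinitesimally: it observes that the fields $iz_t\frac{\del}{\del z_t}$, $1\le t\le l$, are among the $F_A$-invariant vector fields vanishing at $0$ constructed in the proof of Proposition~\ref{vf}, hence descend to commuting real holomorphic vector fields $Z_1,\dots,Z_l$ on $M_A$; it then integrates these and deduces $2\pi$-periodicity of the flows by comparing them, over the dense open set $W-C$, with the flows of $iz_t\frac{\del}{\del z_t}$ on $\D$. This is exactly the ``alternative route'' you sketch in your last sentence. You instead construct the group action globally, by lifting the coordinate scaling $\rho_\zeta$ on $\D=\BB_{1,2}$ through the blow-up tower via the observation that each center is a fixed point and that, in each chart $f_{j_p}$ with $j_p>l$, the lifted action is again the standard scaling of the first $l$ affine coordinates. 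Your construction delivers periodicity and the group structure for free, at the cost of the inductive chart computation; the paper's version recycles work already done for Proposition~\ref{vf} and only needs to close up the orbits. One small inaccuracy in your closing remark: if some $j_p\le l$ occurred, the center $f_{j_p}(0)=(0,[0:\cdots:1:\cdots:0])$ would \emph{still} be fixed, since coordinate points of $\CP^{n-1}$ are fixed by any diagonal torus action; what would actually fail is that the action read in the chart $f_{j_p}$ would no longer be the standard scaling of the first $l$ coordinates, so it is your inductive hypothesis, not the fixedness of the center, that the condition $j_p>l$ protects. Since a type $l$ matrix satisfies $\min_p j_p=l+1$, this does not affect the validity of your argument.
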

\begin{proof}
By the above proof, the real holomorphic vector fields on $\CC^n$ given by
\begin{equation*} 
X_1=\re(iz_1\frac{\del}{\del z_1}),\ldots, X_l=\re(iz_l\frac{\del}{\del z_l})
\end{equation*} 
induce $l$ linearly independent real holomorphic vector fields $Z_1,\ldots, Z_l$ on $M_A$. For $1\leq j\leq l$, denote by $\Phi^t_j(z)=(z_1,\ldots,z_{j-1},\e^{it}z_j,z_{j+1}\ldots, z_n)$ and by $\Psi^t_j$ the one-parameter groups of $X_j$ and of $Z_j$ respectively.  

Fix $1\leq j\leq l$. Note that since $\Phi_j^t$ commutes with $F_A$, this implies that $\Psi_j^t$ preserves $W\subset M_A$. Moreover, since $\pi_* Z_j|_W=X_j$, we have $\pi\circ\Psi^t_j|_W=\Phi^t_j\circ\pi|_W$. Therefore, as $\Phi_j^{2\pi}=\id$ on $\CC^n$, we infer that $\Psi_j^{2\pi}|_{W-C}=\id$. However $W-C$ is a dense subset of $M_A$, hence $\Psi_j^{2\pi}=\id$ on $M_A$. 

Finally, since $[X_j,X_m]=0$ it follows that $[Z_j,Z_m]=0$ for any $1\leq j,m\leq l$.  Thus indeed the vector fields $Z_1,\ldots, Z_l$ generate the effective action of a compact torus $\mathbb{T}^l$ on $M_A$.
\end{proof}

\begin{prop}\label{forme}
Let $A\in\GL(n,\ZZ)$ be a Kato matrix. Then $H^0(M_A,\Omega^1)=0$.
\end{prop}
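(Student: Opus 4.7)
I would reformulate the problem as a functional equation on $\D$ and then run a Taylor series argument supported by Perron--Frobenius.

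Given $\beta\in H^0(M_A,\Omega^1)$, following the pattern of the proof of Proposition~\ref{vf}, one first extends $\beta|_W$ by Hartogs to a holomorphic $1$-form $\hat\beta$ on $\hat\D^{(k)}$. The gluing condition $\gamma^*\hat\beta = \hat\beta$, valid on a neighbourhood of $\partial\hat\D$ inside $\hat\D^{(k)}$, propagates by the identity principle for holomorphic forms to all of $\hat\D^{(k)}$; substituting $\gamma = \sigma\circ\pi$ and setting $\tilde\beta := \sigma^*\hat\beta \in H^0(\overline\D,\Omega^1)$, this reads $\hat\beta = \pi^*\tilde\beta$, and restricting via $\sigma$ once more gives $F_A^*\tilde\beta = \tilde\beta$ on $\overline\D$. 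Since $\tilde\beta = 0$ immediately implies $\beta = 0$, it is enough to show that any $F_A$-invariant holomorphic $1$-form on $\overline\D$ vanishes.

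Up to replacing $A$ by $A^p$ with $p \geq n-l$ (which is harmless because the unramified covering $M_{A^p}\to M_A$ of Lemma~\ref{cov} induces an injection on $H^0(\cdot,\Omega^1)$, and $A^p$ is $l$-positive by Remark~\ref{positive}), I assume that $A = \bigl(\begin{smallmatrix} I_l & G \\ 0 & B\end{smallmatrix}\bigr)$ with $B$ an $(n-l)\times(n-l)$ positive Kato matrix. Writing $\tilde\beta = \sum_t h_t\,dz_t$ and setting $g_t(z) := z_th_t(z)$, the equation $F_A^*\tilde\beta = \tilde\beta$ is equivalent to $g(z) = A^t\,g(F_A(z))$. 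Taylor-expanding $g_s(z) = \sum_{J\in\NN^n} d^{(s)}_J z^J$ and matching coefficients of $z^J$ on both sides yields the recursion
\begin{equation*}
d_J = A^t\, d_{JA^{-1}} \text{ if } JA^{-1}\in\NN^n, \qquad d_J = 0 \text{ otherwise,}
\end{equation*}
supplemented by $d^{(t)}_J = 0$ whenever $J_t = 0$ (because $g_t$ vanishes on $\{z_t=0\}$). In particular $d_0 = 0$, and everything reduces to showing that for every $J\in\NN^n-\{0\}$ the backward orbit $\{JA^{-m}\}_{m\geq 0}$ eventually leaves $\NN^n$, as iterated application of the recursion then forces $d_J = 0$.

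For the orbit analysis, decompose $J = (I,K)\in\NN^l\times\NN^{n-l}$. By Lemma~\ref{LineL} all rows of $G$ coincide with some $L\in\NN^{n-l}$, and Lemma~\ref{vectprJ} gives $J_0(B-I) = (n-l-1)L$ where $J_0 := (1,\ldots,1)$. A direct manipulation of the recursion $K^{(m+1)} = (K^{(m)} - |I|L)B^{-1}$ then yields the closed form $JA^{-m} = (I,K^{(m)})$ with
\begin{equation*}
K^{(m)} = \Bigl(K + \tfrac{|I|}{n-l-1}\,J_0\Bigr) B^{-m} - \tfrac{|I|}{n-l-1}\,J_0,
\end{equation*}
where $|I| := \sum_s I_s$. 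Let $v$ denote the positive Perron column eigenvector of $B$, with eigenvalue $\alpha > 1$; since $B^{-m}v = \alpha^{-m}v$, pairing gives
\begin{equation*}
K^{(m)}v = \alpha^{-m}\Bigl(K + \tfrac{|I|}{n-l-1}J_0\Bigr)v - \tfrac{|I|}{n-l-1}J_0 v.
\end{equation*}
If $I\neq 0$, the right-hand side tends to $-\tfrac{|I|}{n-l-1}J_0v < 0$, incompatible with $K^{(m)}\in\NN^{n-l}$ for large $m$. If $I = 0$ but $K\neq 0$, then $K^{(m)}v = \alpha^{-m}Kv$, hence $|K^{(m)}|_1\leq K^{(m)}v/\min_j v_j\to 0$, so the integer vector $K^{(m)}$ must vanish eventually, forcing $K = 0$ by invertibility of $B$, a contradiction. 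Thus the orbit always leaves $\NN^n$, so $d_J = 0$ for all $J$, $\tilde\beta = 0$ and $\beta = 0$. The main technical subtlety is extracting the closed form for $K^{(m)}$, which exploits the identity from Lemma~\ref{vectprJ} in an essential way; once this is in hand, the rest of the argument is essentially a Perron--Frobenius computation.
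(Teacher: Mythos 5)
Your proof is correct, and its overall architecture---reduce to an $l$-positive power of $A$ via Lemma~\ref{cov}, pull the form back to an $F_A$-invariant holomorphic $1$-form on the ball, expand in a power series and kill the coefficients using the dynamics of $A$ on multi-indices---is the same as the paper's. The differences are in execution and are worth recording. First, you obtain the invariance equation by extending the gluing relation $\pi^*\sigma^*\hat\beta=\hat\beta$ from a neighbourhood of $\del\hat\D^{(k)}$ to all of $\hat\D^{(k)}$ by the identity principle; the paper instead analyses the restriction of the extended form to the exceptional divisor via the dual of the sequence \eqref{exE} together with Lemma~\ref{Hj}, and uses the resulting divisibility of the coefficients to write down equations \eqref{eq5}--\eqref{eq6}. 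Your substitution $g_t=z_th_t$ packages all of this into the single vector equation $g=A^t\,(g\circ F_A)$, and the only geometric input you need beyond invariance is the tautological divisibility of $g_t$ by $z_t$, which disposes of $d_0$ (a step that is genuinely needed, since $1\in\Spec(A)$ when $l\geq 1$). Second, at the coefficient-killing stage the paper simply refers back to the minimal-index argument of Proposition~\ref{vf}, whereas you run a complete backward-orbit analysis: the closed form for $K^{(m)}$ via the fixed point supplied by Lemma~\ref{vectprJ}, followed by pairing with a Perron eigenvector. This is closer in spirit to the paper's proofs of Proposition~\ref{holofunct} and Theorem~\ref{ThmHvn-2}, and it is more self-contained than the paper's ``as before''. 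Two small points you should make explicit: $\alpha>1$ (which holds because $B$ is a positive integer matrix of size $n-l\geq 2$, so its Perron root is at least $n-l$), and $n-l-1\geq 1$ (which holds because $A$ is a Kato matrix, so $l\leq n-2$), so the division by $n-l-1$ is legitimate.
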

\begin{proof}
Since $M_{A^p}\rightarrow M_A$ is a finite covering for $p>0$, it follows that $\dim H^0(M_A,\Omega^1)\leq \dim H^0(M_{A^p},\Omega^1)$. Thus it suffices to prove the assertion for $A^p$, so we can suppose, via Remark~\ref{positive}, that $A$ is $l$-positive.

Consider, as in the beginning of the proof of Proposition~\ref{vf}, the dual of the exact sequence \eqref{exE}. Then, as $H^0(E,\Omega^1)=0$, it follows that for any holomorphic form $\al\in H^0(\hat \DD,\Omega^1)$, $\al|_E$ vanishes when evaluated on vector fields tangent to $E$.

Let now $\al\in H^0(M_A, \Omega^1)$. In the same way as before, $\al$ determines $\hat \al\in H^0(\hat \DD^{(k)},\Omega^1)$ and then $\be=\sigma^*\hat\al\in H^0(\DD,\Omega^1)$. Moreover, by the above considerations and by Remark~\ref{Hj}, we find that $\be$ is of the form
\begin{equation*}
\be=\sum_{j=1}^l h_j(z,w)dz_j+\sum_{s=l+1}^n \frac{w_l\cdots w_n}{w_s}h_s(z,w)dw_s
\end{equation*}
where $h_1, \ldots, h_n$ are holomorphic functions on $\DD$. 
Now, writing $F_A(z,w)=(z_1w^L,\ldots, z_lw^L,F_B(w))$ with $L=(p_{l+1},\ldots, p_n)\in\NN^{n-l}$, the condition $F_A^*\be=\be$ translates into:
\begin{align}
\label{eq5} F_A^*h_jw^L&=h_j, &1\leq j\leq l\\ 
\label{eq6} p_t\sum_{j=1}^lF_A^*h_jz_jw^L&+\sum_{s=l+1}^na_{st}F_{l+1}\cdots F_n F_A^*h_s=w_{l+1}\cdots w_n h_t, &l+1\leq t\leq n
\end{align}
where $F_{l+1},\ldots, F_n$ denote the last $n-l$ components of $F_A$. 

Now we proceed as in Proposition~\ref{vf} and write $h_1,\ldots,h_n$ as power series in $z$ and $w$. Then we find, as before, that $h_1=\ldots=h_l=0$ from \eqref{eq5}, and then also that $h_{l+1}=\ldots=h_n=0$ from \eqref{eq6}. Thus $\be=0$ and the conclusion follows. 
\end{proof}

\section{Analytical invariants}\label{secInv}
%\subsection{Kodaira dimension}

\begin{prop}\label{algdim}
Let $A\in\GL(n,\ZZ)$ be a Kato matrix and let $m(1)$ denote the geometric multiplicity of $1$ as an eigenvalue of $A$. Then the algebraic dimension of $M_A$ satisfies $n>a(M_A)\geq m(1)$. In particular, if $A$ is of type $l$ with $l>0$, then $a(M_A)\geq l$. Moreover, if $l=n-2$, then $a(M_A)=n-2$. 
\end{prop}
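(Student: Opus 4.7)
The statement has three separate pieces: the strict upper bound $a(M_A)<n$, the lower bound $a(M_A)\geq m(1)$ (which yields $a(M_A)\geq l$ in the type-$l$ case once $l>0$), and the equality $a(M_A)=n-2$ in the regime $l=n-2$. The easiest piece is the upper bound. The plan is to use that $b_1(M_A)=1$ (computed in Section~\ref{SecCoh}) is odd, hence $M_A$ is not Moishezon: otherwise a sequence of blow-ups would take it to a smooth projective variety while preserving $b_1$, contradicting the Hodge-theoretic parity of $b_1$ for projective manifolds.

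For the lower bound $a(M_A)\geq m(1)$, I would construct algebraically independent meromorphic functions on $M_A$ out of Laurent monomials. Given $I=(i_1,\ldots,i_n)\in\ZZ^n$, set $z^I:=z_1^{i_1}\cdots z_n^{i_n}$, a meromorphic function on $\D$. Because the germ $F_A$ has monomial components (cf.\ Lemma~\ref{LineL}), one has the transformation rule $z^I\circ F_A=z^{IA}$, so $z^I$ is $F_A$-invariant precisely when $I\in K_A=\{I\in\ZZ^n\,:\,IA=I\}$. Any such invariant $z^I$ pulls back by $\pi$ to a meromorphic function on $\hat\D^{(k)}$ which is compatible with the gluing $\gamma=\sigma\circ\pi$ (since $\pi\circ\sigma=F_A$), hence descends to a meromorphic function on $M_A$. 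The $\ZZ$-rank of $K_A$ equals $\dim_{\CC}\ker(A^t-\I_n)=m(1)$; picking a $\ZZ$-basis $I^{(1)},\ldots,I^{(m(1))}$ of $K_A$, the monomials $z^{I^{(j)}}$ are algebraically independent, because a nontrivial polynomial relation would expand into a linear dependence among pairwise distinct Laurent monomials. This yields $a(M_A)\geq m(1)$, and combining with Remark~\ref{eigenv1} finishes the type-$l$ statement for $l>0$.

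For the equality when $l=n-2$, I would first observe that the $2\times 2$ block $B$ of $A$ is itself a Kato matrix, so by Remark~\ref{vectprJn-2} one has $K_B=0$, hence $\rank K_A=l=n-2$ and the previous paragraph gives $a(M_A)\geq n-2$. For the matching upper bound $a(M_A)\leq n-2$, the plan is to invoke Theorem~\ref{Thmcompact} in the regime $l=n-2$: it furnishes a biholomorphism $M_A-C_A\cong \CC^{n-2}\times(M_B-C_B)$ between Zariski dense open subsets of the compact complex manifolds $M_A$ and $\CP^{n-2}\times M_B$, which extends to a bimeromorphism and therefore identifies the two meromorphic function fields. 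Since $M_B$ is an Inoue-Hirzebruch surface, it is classically known that $a(M_B)=0$, so every meromorphic function on $\CP^{n-2}\times M_B$ is constant along a generic fiber of the projection onto $\CP^{n-2}$, giving $a(\CP^{n-2}\times M_B)=n-2$. The main obstacle I expect is exactly this upper bound: the monomial construction alone cannot preclude exotic, non-toric meromorphic functions on $M_A$, and ruling them out genuinely relies on the structural compactification result of Theorem~\ref{Thmcompact} together with the vanishing $a(M_B)=0$ for Inoue-Hirzebruch surfaces; everything else amounts to careful bookkeeping once the correspondence between $F_A$-invariant meromorphic functions on $\D$ and meromorphic functions on $M_A$ is set up.
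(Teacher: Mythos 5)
Your treatment of the first two assertions matches the paper's proof essentially verbatim: the lower bound $a(M_A)\geq m(1)$ is obtained exactly as in the paper, from the monomials $z^{I}$ with $I$ in the rank-$m(1)$ lattice $K_A=\{I\in\ZZ^n: IA=I\}$, descending to $M_A$ by $F_A$-invariance and being algebraically independent because distinct Laurent monomials are linearly independent; and the strict inequality $a(M_A)<n$ is ruled out via the odd first Betti number (the paper phrases this through the $\del\overline\del$-lemma and Hodge decomposition rather than through Moishezon's projectivity-after-blow-ups theorem, but the two arguments are interchangeable here).

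For the equality $a(M_A)=n-2$ when $l=n-2$, however, your route has a genuine gap. You assert that the biholomorphism $M_A-C_A\cong \CC^{n-2}\times(M_B-C_B)$ of Theorem~\ref{Thmcompact} ``extends to a bimeromorphism and therefore identifies the two meromorphic function fields.'' This extension is not automatic and is in fact the whole difficulty: a biholomorphism between dense open complements of divisors in two compact complex manifolds need not extend meromorphically across the boundary divisors, and two such compactifications of the same open manifold need not be bimeromorphic. What you do get for free is only an injection $\CC(M_A)\hookrightarrow \mathcal M(M_A-C_A)$ into the (enormous) field of meromorphic functions of the non-compact manifold $\CC^{n-2}\times(M_B-C_B)$, which gives no upper bound on $a(M_A)$ whatsoever --- compare $\mathcal M(\CC^n)$ versus $\CC(\CP^n)$. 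The paper sidesteps this entirely: by Remark~\ref{subvarKato}, $M_A$ contains an embedded Kato \emph{surface} $M_B$ of codimension $n-2$, and Ueno's inequality $a(N)\leq a(M)+\operatorname{codim}_N(M)$ for a compact submanifold $M\subset N$ then gives $a(M_A)\leq a(M_B)+n-2=n-2$, using that Kato surfaces have algebraic dimension $0$. (The function-field description you are aiming for is the content of the later Theorem~\ref{ThmAlgRed}, whose proof itself uses the already-established equality $a(M_A)=n-2$, so arguing in your direction would be circular without an independent proof of the bimeromorphic extension.)
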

\begin{proof}
Consider the map $L:\ZZ^n\rightarrow \ZZ^n$, $I\mapsto IA-I$, and let $Q=\ker L\subset\ZZ^n$. Then $Q$ is a lattice of rank $p:=m(1)$, by hypothesis, and we fix a basis $I_1,\ldots, I_p$ of $Q$.  

Consider now the meromorphic functions $f_j:\CC^n\dasharrow\CC$, $f_j(z)=z^{I_j}$, for $1\leq j\leq p$. Since $I_jA=I_j$, they are invariant under the action of $F_A$:
\begin{equation*}
(F_A^*f_j)(z)=f_j(z^A)=z^{I_jA}=z^{I_j}=f_j(z).
\end{equation*}
Thus, the functions $\pi^*f_j$ on $W$ are invariant to the gluing map $\sigma\circ\pi$ and define meromorphic functions $\hat f_j$ on $M_A$, for $1\leq j\leq p$. 

Let us check that they are algebraically independent. It is enough to check that this happens on $W-E\subset M_A$, which is biholomorphic to $\pi^{-1}(W-E)$ via $\pi$. Therefore it is enough to check that the functions $f_1,\ldots, f_p$ are algebraically independent on $\D-F_A(\D)\cap(\CC^*)^n\subset\pi^{-1}(W-E)$. Suppose thus that there exists $P\in\CC[X_1,\ldots, X_p]$ with $P(f_1,\ldots, f_p)=0$. If we write $P=\sum_K a_KX^K$, where $K=(k_1,\ldots, k_p)$ runs over a finite set of $\mathbb{N}^p$ and $X^K=X_1^{k_1}\cdots X_p^{k_p}$, we find:
\begin{align*}
0=P(f_1,\ldots, f_p)=\sum_{K=(k_1,\ldots, k_p)} a_K z^{k_1I_1}\cdots z^{k_pI_p}=\sum_{K=(k_1,\ldots, k_p)}a_Kz^{k_1I_1+\ldots+k_pI_p}.
\end{align*} 
But now, since the functions $z_1,\ldots, z_n$ are algebraically independent, it follows that for each $K=(k_1,\ldots, k_p)$ with $a_K\neq 0$ we have $k_1I_1+\ldots+k_pI_p=0$. However, since $I_1,\ldots, I_p$ are linearly independent, this implies then that $K=0\in\NN^p$ and $P(f_1,\ldots, f_p)=a_0=0$. Thus $P=0$ and the conclusion follows.

On the other hand, we cannot have $a(M_A)=n$, since then $M_A$ would satisfy the $\del\overline\del$-lemma and would admit a Hodge decomposition. However, this is impossible since $b_1(M_A)=1$. Note that when $A$ is of type $l>0$ we have $m(1)\geq l>0$ by Remark~\ref{eigenv1}. 

In the case $l=n-2$, we have $\begin{tiny}A=\begin{pmatrix}
  I_{n-2} & G \\
  0 & B
 \end{pmatrix}\end{tiny}$.  We can apply \cite[Theorem 3.8, (ii)]{ueno}, according to which if $M$ is a complex submanifold of $N$, then $a(N) \leq a(M)+codim(M)$. Applying this inequality for $N=M_{A}$ and $M=M_B$, we get $n-2\leq a(M_A) \leq a(M_B)+n-2$. However, $M_B$ is a Kato surface and it is well known that is has algebraic dimension 0, therefore $a(M_A)=n-2$.
\end{proof}

\begin{rmk}
Let $A$ be Kato matrix of type $l$. By Lemma~\ref{vectprJ}, the vectors $I_j=((n-l-1)e_j,-J_0)\in\ZZ^n$, $1\leq j\leq l$ are $l$ linearly independent vectors satisfying $I_jA=A$, where $e_1,\ldots, e_l$ is the standard basis of $\RR^l$ and $J_0=(1,\ldots, 1)\in\NN^{n-l}$. It follows that we have $l$ algebraically independent meromorphic functions $\hat f_1,\ldots, \hat f_l$ on $M_A$ induced by the $F_A$-invariant functions on $\CC^n$: 
\begin{equation*}
f_j(z)=\frac{z_j^{n-l-1}}{z_{l+1}\cdots z_n}, \ \ \ 1\leq j\leq l.
\end{equation*}
\end{rmk}

\begin{rmk}\label{linEquiv}
Let $A$ be a Kato matrix of type $l$. For $1\leq j\leq l$, let $A_{(j)}\in\GL(n-1,\ZZ)$ be the Kato matrix obtained by erasing the $j$-th line and column from $A$. Pulling back via $\pi$ the divisor $\{z_j=0\}\subset\CC^n$ and then gluing via $\gamma$, we obtain precisely $M_{A_{(j)}}\subset M_A$, by Remark~\ref{subvarKato}. On the other hand, the divisor on $M_A$ induced by the same procedure from $\{z_{l+1}\cdots z_n=0\}\subset\CC^n$  gives $C_A$, by Lemma~\ref{Hj}. Hence we find: 
\begin{equation*}
div(\hat f_j)=(n-l-1)M_{A_{(j)}}-C_A, \ \ 1\leq j\leq l
\end{equation*}
i.e. the functions $\hat f_j$ give linear equivalences between the divisors $(n-l-1)M_{A_{(j)}}$ and $C_A$. 
\end{rmk}

\begin{prop}\label{Kodairadimension} Let $A$ be a Kato matrix of type $l$ and let $\mathcal L$ be the flat line bundle on $M_A$ associated to $\rho:\ZZ=\pi_1(M_A)\rightarrow \ZZ/2\ZZ$, $\rho(m)=(-1)^m$. If $\det A=1$, then we have:
\begin{equation*}
K_{M_A}=\mathcal O(-A_{(1)}-\ldots A_{(l)}-C_A) \ \text{ hence } \ K_{M_A}^{n-l-1}=\mathcal O(-(n-1)C_A).
\end{equation*}
Otherwise, we have:
\begin{equation*}
K_{M_A}=\mathcal L\otimes\mathcal O(-A_{(1)}-\ldots A_{(l)}-C_A) \ \text{ hence }\  K_{M_A}^{n-l-1}=\mathcal L^{n-l-1}\otimes\mathcal O(-(n-1)C_A).
\end{equation*}
In particular, the Kodaira dimension of $M_{A}$ is $-\infty$.
\end{prop}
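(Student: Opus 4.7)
The plan is to construct an explicit meromorphic section of a flat twist of $K_{M_A}$ and read off its divisor. The candidate is the logarithmic volume form $\omega_0=\frac{dz_1\wedge\cdots\wedge dz_n}{z_1\cdots z_n}$ on $\D$. A direct computation with $F_A^{*}\!\left(\frac{dz_i}{z_i}\right)=\sum_j a_{ij}\frac{dz_j}{z_j}$ on $(\CC^{*})^n$ yields $F_A^{*}\omega_0=(\det A)\,\omega_0$, with $\det A=\pm 1$ since $A\in\GL(n,\ZZ)$. Setting $\tilde\omega|_{W_m}:=(\det A)^m\pi^{*}\omega_0$ on $\tilde M_A=\bigsqcup_m W_m$ and using $(\sigma\circ\pi)^{*}\pi^{*}\omega_0=\pi^{*}F_A^{*}\omega_0=(\det A)\pi^{*}\omega_0$, one sees that $\tilde\omega$ is a well-defined meromorphic $n$-form on $\tilde M_A$, equivariant by the character $\det A$ under the generator of $\pi_1(M_A)$. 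Hence $\tilde\omega$ descends to a meromorphic section of $K_{M_A}$ when $\det A=1$, and of $\mathcal L\otimes K_{M_A}$ when $\det A=-1$ (using $\mathcal L^{2}\cong\mathcal O$).

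The next step is to compute the divisor of $\pi^{*}\omega_0$ on $\hat\D^{(k)}$, as a section of $K_{\hat\D^{(k)}}$, showing it equals $-\sum_{j=1}^n H_j^{st}-\sum_{p=1}^k E_p^{st}$, where $H_j^{st}$ and $E_p^{st}$ denote strict transforms of $\{z_j=0\}$ and of the $p$-th exceptional divisor. This amounts to iterating the log-canonical identity
\[
\pi^{*}(K_X+D)=K_{\hat X}+\pi^{-1}(D)_{\mathrm{red}},
\]
valid whenever the blow-up center is a zero-dimensional stratum of an snc divisor $D$. The base case is that $\omega_0$ is a nowhere-vanishing section of $\mathcal O(K_\D+D_0)$ with $D_0=\sum_j\{z_j=0\}$. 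The inductive step is justified because the Kato blow-up center $\sigma_{p-1}(0)=\iota_{p-1}(f_{j_{p-1}}(0))$ is always a zero-dimensional stratum of the running snc divisor: in the coordinates of the chart $f_{j_{p-1}}$ the previous exceptional divisor becomes $\{z_n=0\}$ and the strict transforms of the other $n-1$ components of the running snc pair become $\{z_1=0\},\ldots,\{z_{n-1}=0\}$, all meeting transversally at the origin. Gluing $\pi^{*}\omega_0|_W$ via $\gamma$ and invoking Remark~\ref{linEquiv} (which identifies each $H_j^{st}$ for $1\leq j\leq l$ with $A_{(j)}=M_{A_{(j)}}$) together with Lemma~\ref{Hj} (which assembles the remaining $H_j^{st}$ for $l+1\leq j\leq n$ and all $E_p^{st}$ into the reduced divisor $C_A$), the divisor of the descended section on $M_A$ reads $-\sum_{j=1}^l A_{(j)}-C_A$, which is the first displayed formula.

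For the $(n-l-1)$-th power, the linear equivalence $(n-l-1)A_{(j)}\sim C_A$ from Remark~\ref{linEquiv}, summed over $j=1,\ldots,l$, gives $(n-l-1)\sum_{j=1}^l A_{(j)}\sim lC_A$, whence $K_{M_A}^{n-l-1}\cong\epsilon^{n-l-1}\otimes\mathcal O(-lC_A-(n-l-1)C_A)=\epsilon^{n-l-1}\otimes\mathcal O(-(n-1)C_A)$, with $\epsilon\in\{\mathcal O,\mathcal L\}$. Finally $\kappa(M_A)=-\infty$ follows because any nonzero $s\in H^0(M_A,K_{M_A}^m)$ would yield a nonzero $s^{n-l-1}\in H^0(M_A,\epsilon^{m(n-l-1)}\otimes\mathcal O(-m(n-1)C_A))$; pulling back via the double cover $M_{A^{2}}\to M_A$ of Lemma~\ref{cov}, which trivializes $\mathcal L$, would produce a nonzero holomorphic function on the compact connected manifold $M_{A^{2}}$ vanishing along the nonempty divisor $m(n-1)\widetilde{C_A}$, which is impossible. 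The crux is the divisor computation on $\hat\D^{(k)}$: one must verify that the Kato blow-up centers always remain zero-dimensional strata of the running snc divisor, which reduces to the chart computation described above but still requires careful bookkeeping through the composition of blow-ups.
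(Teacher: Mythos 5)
Your proposal is correct and follows essentially the same route as the paper: both use the logarithmic volume form $\frac{dz_1\wedge\cdots\wedge dz_n}{z_1\cdots z_n}$, its $F_A$-equivariance with character $\det A=\pm1$ (hence the $\mathcal L$-twist when $\det A=-1$), and the identification of its divisor via Lemma~\ref{Hj} and Remark~\ref{linEquiv}. You simply supply more detail than the paper does on two points it leaves implicit — the reduced-pole computation of $\pi^{*}\omega_0$ through the iterated blow-ups and the final deduction that $\kappa(M_A)=-\infty$ — and both of these added arguments are sound.
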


\begin{proof} 
Let us first assume that $\det A=1$. Consider the meromorphic $n$-form on $\CC^n$:
\begin{equation*}
\Omega=\frac{dz_1 \wedge \ldots \wedge dz_n}{z_1\cdots  z_n}.
\end{equation*}
Since $F_A^*\Omega=\Omega$, it follows that this form induces a meromorphic $n$-form $\omega$ on $M_A$. Moreover, by Remark~\ref{linEquiv}, we have:
\begin{equation*}
-K_{M_A}=-div(\omega)=A_{(1)}+\ldots A_{(l)}+C_A
\end{equation*}
from which the conclusion follows, using again Remark~\ref{linEquiv}. 

Otherwhise, we have $\det A=-1$, but then $F_A^*\Omega=-\Omega$. Hence $\Omega$ induces an $\mathcal L$-valued meromorphic $n$-form $\omega$ on $M_A$. The desired result follows similarly, using that $\mathcal L^*=\mathcal L$.  
\end{proof} 

\begin{rmk}
When $\det A=1$, the above result gives $K_{M_A}=\mathcal O(-C_A)$ in the case $l=0$ and $K_{M_A}=\mathcal O(-(n-1)C_A)$ in the case $l=n-2$.
\end{rmk}

\begin{prop}\label{holofunct}
For a positive Kato matrix $A$ satisfying that $Spec(A)$ does not contain any root of unity, there are no non-constant holomorphic functions on $M_A-C$. 
\end{prop}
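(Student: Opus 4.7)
The plan is to combine Theorem~\ref{Thmcompact} with a Laurent-series analysis on the Reinhardt domain $\DD^{*}=\DD\cap(\CC^{*})^{n}$. Since $A$ is positive, $l=0$ in Theorem~\ref{Thmcompact}, so $M_{A}-C\cong W^{s}(F_{A})^{*}/U_{A}$, and any holomorphic function $h$ on $M_{A}-C$ lifts to an $F_{A}$-invariant holomorphic function $\tilde h$ on the open set $W^{s}(F_{A})^{*}\subset(\CC^{*})^{n}$. Because $F_{A}(\DD^{*})\subset\DD^{*}$ by Lemma~\ref{contractie} and $\DD^{*}$ is a Reinhardt domain, I would expand $\tilde h(z)=\sum_{I\in\ZZ^{n}}c_{I}z^{I}$ with absolute convergence on compacta. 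Matching $\tilde h\circ F_{A}$ and $\tilde h$ via uniqueness of Laurent coefficients then yields $c_{I}=c_{A^{t}I}$ for every $I$, so $(c_{I})$ is constant along the $A^{t}$-orbits in $\ZZ^{n}$. The no-root-of-unity hypothesis makes every such orbit of a non-zero vector infinite, because $(A^{t})^{p}I=I$ for $I\neq 0$ would supply $A$ with a $p$-th root of unity as eigenvalue.

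The problem then reduces to showing $c_{I}=0$ for every $I\neq 0$, and it suffices to exhibit one $t^{*}\in\log|\DD^{*}|$ at which the orbit subsum $\sum_{m\in\ZZ}\exp\langle (A^{t})^{m}I,t^{*}\rangle$ diverges, for absolute convergence of the full Laurent series at $t^{*}$ would then force $|c_{I}|$ to vanish. For this I would use the two hypotheses on $A$ structurally. Perron--Frobenius gives a simple Perron eigenvalue $\alpha$ of $A$ with an all-positive eigenvector, strictly dominating every other eigenvalue in modulus. The identity $|\det A|=1$ together with $\alpha\neq 1$ (no root of unity) forces $\alpha>1$, and $\alpha$ is then irrational, so the Perron eigenvector has irrational entries and no non-zero integer vector is proportional to it. Again from $|\det A|=1$ one obtains some eigenvalue $\gamma$ with $|\gamma|<1$, and accordingly the spectrum splits into expanding, neutral and contracting parts.

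The main obstacle is the case analysis for the divergence. Writing $I=I_{\mathrm{exp}}+I_{\mathrm{neu}}+I_{\mathrm{con}}$ in the $A^{t}$-invariant splitting, I would use that the Perron direction is the unique eigendirection of $A^{t}$ with same-sign entries, so every other real eigenvector, or the real form of a complex-conjugate pair, has mixed signs; consequently within $\log|\DD^{*}|$ (which reaches arbitrarily deep into the negative orthant of $\RR^{n}$) the sign of $\langle v,t^{*}\rangle$ can be chosen freely along any such non-Perron direction $v$. If $I$ has a non-Perron expanding component, the dominant growth of $(A^{t})^{m}I$ as $m\to+\infty$ is along such a mixed-sign $v$, and $t^{*}$ can be tuned so that $\langle (A^{t})^{m}I,t^{*}\rangle\to+\infty$---or, for complex eigenvalues of modulus $\rho>1$, is large positive for infinitely many $m$ via the $\rho^{m}\cos(m\theta+\varphi)$ oscillation---yielding divergence. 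If $I_{\mathrm{exp}}$ is purely Perron, then $I_{\mathrm{con}}+I_{\mathrm{neu}}\neq 0$---otherwise $I$ would be a non-zero integer multiple of the irrational Perron eigenvector---and the analogous argument run as $m\to-\infty$ handles the contracting part; a purely neutral tail keeps the orbit bounded, so $\exp\langle (A^{t})^{m}I,t^{*}\rangle$ stays bounded below and the sum trivially diverges. If $I_{\mathrm{exp}}=0$ the $m\to-\infty$ argument applies directly. In every case $c_{I}=0$, so $\tilde h\equiv c_{0}$ on $\DD^{*}$ and, by analytic continuation on the connected set $W^{s}(F_{A})^{*}$, globally; whence $h$ is constant on $M_{A}-C$.
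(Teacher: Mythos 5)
Your setup coincides with the paper's: pass to $W^{s}(F_{A})^{*}/U_{A}$, expand an $F_{A}$-invariant lift in a Laurent series on a Reinhardt domain, deduce $c_{I}=c_{IA}$, note that the no-root-of-unity hypothesis makes every orbit of a nonzero $I$ infinite, and reduce to showing that some orbit subsum $\sum_{m\in\ZZ}|t|^{IA^{m}}$ diverges at a point of the domain. The divergence step, however, is where your argument has a genuine gap. Your first case asserts that if $I$ has a non-Perron expanding component, then the dominant growth of $(A^{t})^{m}I$ as $m\to+\infty$ is along a mixed-sign direction $v$. This is false whenever $I$ also has a nonzero Perron component: the Perron eigenvalue strictly dominates all others in modulus, so the growth is then governed by the all-positive Perron eigenvector $w$, and since $t^{*}$ must lie in the negative orthant (every coordinate of a point of $\log|\DD^{*}|$ is negative), one has $\langle w,t^{*}\rangle<0$; if the Perron coefficient of $I$ is positive the exponent tends to $-\infty$ and the terms decay, so no divergence is obtained from $m\to+\infty$. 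Such an $I$ (positive Perron coefficient plus a non-Perron expanding component, possibly with no contracting or neutral part) is not caught by your other two cases either, which assume the expanding part is purely Perron or zero. The remaining cases also lean on unverified points: for complex contracting eigenvalues you need the oscillating term to be bounded below by a positive constant infinitely often with nonzero amplitude for an admissible $t^{*}$, and modulus-one eigenvalues that are not roots of unity may a priori carry Jordan blocks, so "purely neutral implies bounded orbit" needs justification.

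All of this case analysis is avoidable, and the paper's proof shows how: instead of decomposing $I$, pair against the Perron direction of $A$ itself. Take a Perron vector $f$ of $A$ with positive entries and eigenvalue $\al>1$, scaled so that $t^{*}:=-2\pi f$ lies in $\log|\DD^{*}|$ (equivalently, $p(-f)\in W^{s}(F_{A})^{*}$, which holds by the description of $\Omega$ in the proof of Theorem~\ref{Thmcompact}). Then for \emph{every} nonzero $I$ one has $\langle IA^{-m},t^{*}\rangle=\langle I,A^{-m}t^{*}\rangle=-2\pi\al^{-m}\langle I,f\rangle\to 0$ as $m\to\infty$, so the terms of the orbit subsum tend to $1$ in modulus rather than to $0$; since the orbit is infinite, the subsum diverges and $c_{I}=0$. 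This single evaluation replaces your entire expanding/neutral/contracting trichotomy. I would recommend restructuring your final step around this observation; as written, the case analysis both fails on the configuration described above and leaves the oscillation and Jordan-block issues open.
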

\begin{proof}
Suppose $f$ is a holomorphic function on $M_A-C$. By Proposition~\ref{compactification}, $f$ induces then a holomorphic function $g$ on $T:=W^s(F_A)^*$ which is $U_A=<F_A>$-invariant. 

Now $T\subset\CC^n$ is clearly a Reinhardt domain centered in $0$, meaning that it is closed under rotations in each coordinate. Moreover it is relatively complete as it does not intersect any of the hyperplanes $\{z_j=0\}$. Thus, $g$ can be expressed as a Laurent series:
\begin{equation}\label{ser}
g(t)=\sum_{I\in \ZZ^n}a_It^I
\end{equation}
which converges absolutely and uniformly on every compact subset of $T$ (see for instance \cite[Chapter~1]{shabat}). Here, for $I=(j_1,\ldots, j_n)\in\ZZ^n$ and $t=(t_1,\ldots, t_n)\in T$, we denote by $t^I=t_1^{j_1}\cdots t_n^{j_n}$.

Moreover, as $g$ is invariant under the action of $F_A$, we have:
\begin{equation*}
F_A^*g=\sum_{I\in\ZZ^n}a_It^{IA}=\sum_{I\in \ZZ^n} a_I t^I
\end{equation*}
hence $a_I=a_{IA}$ for any $I\in\ZZ^n$.  

Let us denote by $Q=\ZZ^n-\{0\}/\langle A\rangle$ the orbit set of the action of the group generated by $A$ on $\ZZ^n-\{0\}$. Since $1\notin\Spec(A^m)$ for any $m>1$, we have a partition into infinite sets $\ZZ^n-\{0\}=\sqcup_{I\in Q}\{IA^m, m\in\ZZ\}$. Then, by  the absolute convergence of \eqref{ser} and by Fubini, we have:
\begin{equation*}
g(t)=a_0+\sum_{I\in Q}a_I\sum_{m\in\ZZ} t^{IA^m}
\end{equation*}
and for each $I\in Q$ such that $a_I\neq 0$, the series $u_I(t)=\sum_{m\in \ZZ}t^{IA^m}$ converges absolutely. %and uniformly on compact subsets of $T$. 

Let us fix $I=(j_1,\ldots, j_n)\in Q$ with $a_I\neq 0$. Writing $t=p(z)=(\e^{2\pi z_1},\ldots, \e^{2\pi z_n})$ for $z\in\Omega=p^{-1}(T)$, we have:
\begin{equation*}
t^{IA^m}=F_A^m(p(z))^I=p(A^mz)^I=\e^{2\pi \langle I^*, A^mz\rangle}
\end{equation*}
which must tend to $0$ in absolute value when $m$ goes to $\pm \infty$. Here we denoted by $I^*=\sum_{k=1}^nj_ke_k^*\in(\ZZ^n)^*\subset (\CC^n)^*$ and $\langle\cdot, \cdot \rangle$ stands for the dual pairing. In particular, it follows that the function $v_m(z)=\langle I^*, \re A^{-m} z\rangle$ on $\Omega$ tends to $-\infty$ when $m$ goes to $\infty$. However, if we choose a Perron vector $f\in(\RR_{>0})^n$ for $A$ with corresponding eigenvalue $\al>1$, then $f\in \Omega$ and we have:
\begin{equation*}
\lim_{m\to\infty}v_m(f)=\lim_{m\to\infty}\langle I^*, \al^{-m}f\rangle=0
\end{equation*}
which gives a contradiction. It follows thus that for any $I\in Q$, $a_I=0$, hence $g$ and also $f$ are constant.
\end{proof}

\section{Kato manifolds with $l=n-2$}\label{secn-2}

In this section we consider Kato manifolds $M_A$ associated to a Kato matrix $A\in\GL(n,\ZZ)$ of type $l=n-2$, where $n\geq 3$. For these manifolds, we are able to make precise all the invariants we have considered so far.

In this case, $A$ is of the form $\begin{tiny}\begin{pmatrix}
  I_{n-2} & G \\
  0 & B
 \end{pmatrix}\end{tiny}$ and $B\in\GL(2,\ZZ)$. In addition, it is easy to see that $B$ does not admit $1$ as an eigenvalue. For instance, one can apply the Perron-Frobenius theorem to a power $B^p$ which is a positive matrix, which together with $\det B=\pm 1$ implies that $B^p$ has two real eigenvalues $\al>1$ and $\pm \frac{1}{\al}$. It thus follows by Lemma~\ref{vectprJ} that we have $m_A(1)=n-2$, where $m_A(1)$ is the geometric multiplicity of $1$ as an eigenvalue of $A$. Moreover, the vector $J_0=(1,1)\in\ZZ^2$ satisfies $J_0B-J_0=L$, where $L$ is a line of $G$.
 
 Following \cite[Chapter 1, Section 3]{ueno} we recall that \textit{the algebraic reduction} of a  connected compact complex manifold $M$ is a surjective holomorphic map $r: M^*_A \rightarrow V$, where $M^*_A$ is a smooth manifold which is bimeromorphic to $M_A$, $V$ is a smooth projective manifold of dimension $a(M_A)$ and $r$ induces an isomorphism of the fields of meromorphic functions. Moreover, this is unique up to bimeromorphic equivalences, and the fibers of $r$ are connected.
 
 Let us recall that in our situation, by the proof of Proposition~\ref{algdim}, we have $n-2$ algebraically independent meromorphic functions $\hat f_1,\ldots, \hat f_{n-2}$ on $M_A$, induced by the $F_A$-invariant meromorphic functions on $\CC^n$:
\begin{equation*}
f_j(z,w)=\frac{z_j}{w_{n-1}w_n}, \ \ (z,w)\in\CC^{n-2}\times\CC^2, \  \ 1\leq j\leq n-2.
\end{equation*} 
Here, $z=(z_1,\ldots, z_{n-2})$ denote the first $n-2$ holomorphic coordinates on $\CC^n$ and $w=(w_{n-1},w_n)$ denote the last two holomorphic coordinates. Since $a(M_A)=n-2$, it follows that the field of meromorphic functions on $M_A$, $\CC(M_A)$, is a finite algebraic extension of $\CC(\hat f_1,\ldots, \hat f_{n-2})$.

\begin{thm}\label{ThmAlgRed}
Let $\Phi$ be the meromorphic map: 
\begin{equation}\label{algred2}
\Phi:M_A\dashrightarrow \CC\PP^{n-2}, x\mapsto [1:\hat f_1(x):\ldots: \hat f_{n-2}(x)].
\end{equation}
Then there exists a modification $\mu:M_A^*\rightarrow M_A$ such that the map $r=\Phi\circ\mu:M_A^*\rightarrow \CC\PP^{n-2}$ is the algebraic reduction of $M_A$. Its generic fiber is bimeromorphic to $M_B$. In particular, $\CC(M_A)=\CC(\hat f_1,\ldots, \hat f_{n-2})$. 
\end{thm}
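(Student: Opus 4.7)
\emph{Proof proposal.} The plan is to resolve the indeterminacy of $\Phi$ via Hironaka, identify the generic fiber of the resulting holomorphic map using Theorem~\ref{Thmcompact}, and then use that $a(M_B)=0$ to conclude that meromorphic functions on $M_A$ are rational in $\hat f_1,\ldots,\hat f_{n-2}$.

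By Hironaka's resolution of indeterminacies, there is a composition of blow-ups with smooth centers $\mu:M_A^*\to M_A$ making $r:=\Phi\circ\mu:M_A^*\to\CC\PP^{n-2}$ holomorphic. In the case $l=n-2$, Theorem~\ref{Thmcompact} provides a biholomorphism $\Psi:M_A-C_A\to\CC^{n-2}\times(M_B-C_B)$, and inspection of the explicit formula $\hat\Phi(z,w)=(z_1w^{-J_0},\ldots,z_{n-2}w^{-J_0},w)$ with $J_0=(1,1)$ shows that its first $n-2$ components coincide with $\hat f_1,\ldots,\hat f_{n-2}$. Thus $\Phi|_{M_A-C_A}$ is $\Psi$ followed by projection to the first factor. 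In particular, $r$ is surjective, since its image contains $\CC^{n-2}$, which is dense in $\CC\PP^{n-2}$; and the fiber $\Phi^{-1}(p)\cap(M_A-C_A)$ is biholomorphic to $M_B-C_B$ for every $p\in\CC^{n-2}$.

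To transfer this to the fibers of $r$, let $\Gamma\subset M_A\times\CC\PP^{n-2}\times M_B$ be the closure of the graph of $(\id,\Psi):M_A-C_A\hookrightarrow M_A\times\CC\PP^{n-2}\times M_B$; this is a compact irreducible analytic subset. The first projection $\Gamma\to M_A$ is a proper modification (extending the identity on $M_A-C_A$), and the projection $\Gamma\to\CC\PP^{n-2}\times M_B$ is bimeromorphic because $\Psi$ is an embedding whose image is dense in the target. Choosing $\mu$ to be a common resolution that dominates $\Gamma$ (which one may always arrange by blowing up further), the generic fibers of $r$ are bimeromorphic to those of the projection $\CC\PP^{n-2}\times M_B\to\CC\PP^{n-2}$, namely to $M_B$; in particular they are connected. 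For any $g\in\CC(M_A)=\CC(M_A^*)$, the restriction of $g$ to a generic fiber of $r$ is a meromorphic function on a compact complex surface bimeromorphic to the Inoue-Hirzebruch surface $M_B$; since $a(M_B)=0$ (as recalled in the proof of Proposition~\ref{algdim}), it is constant. Hence $g$ descends to a meromorphic function on $\CC\PP^{n-2}$, forcing $g\in\CC(\hat f_1,\ldots,\hat f_{n-2})$. Combined with $\dim\CC\PP^{n-2}=a(M_A)=n-2$, this identifies $r$ as the algebraic reduction.

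The main obstacle is the identification of the generic fiber of $r$ with a surface bimeromorphic to $M_B$. Since $M_B$ is non-K\"ahler, the standard meromorphic extension theorems that would directly produce a meromorphic map $M_A\dashrightarrow M_B$ need not apply. The closure-of-graph construction sidesteps this by realizing the required bimeromorphic equivalence inside a compact ambient product, where the closure of an analytic subset is automatically analytic; the desired bimeromorphism to $\CC\PP^{n-2}\times M_B$ appears as a byproduct of this construction, without any appeal to extension theorems for maps into $M_B$.
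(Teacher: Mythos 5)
Your overall plan---resolve the indeterminacy, identify the generic fibre of $r$ with (something bimeromorphic to) $M_B$, and use $a(M_B)=0$ to descend meromorphic functions to $\CC\PP^{n-2}$---is reasonable, but the step on which everything hinges contains a genuine gap. You assert that the closure $\Gamma$ of the graph of $(\id,\Psi):M_A-C_A\hookrightarrow M_A\times\CC\PP^{n-2}\times M_B$ is ``automatically analytic'' because the ambient product is compact. This is false: the topological closure of an analytic subset of an open set need not be analytic, even inside a compact manifold. For example, the graph of $z\mapsto \e^{1/z}$ over $\CC^*\subset\CP^1$ is analytic in the open subset $\CC^*\times\CP^1$ of $\CP^1\times\CP^1$, yet its closure is not an analytic subset of $\CP^1\times\CP^1$. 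In your situation, analyticity of $\Gamma$ is precisely equivalent to the meromorphic extension across $C_A$ of the composite $M_A-C_A\to M_B-C_B\hookrightarrow M_B$ --- the very problem you acknowledge and claim to sidestep. Since $M_B$ is non-K\"ahler and carries no non-constant meromorphic functions, no standard extension theorem applies, and nothing in your argument controls $\Psi$ near $C_A$. Without analyticity of $\Gamma$, the claims that $\Gamma\to M_A$ is a proper modification and that $\Gamma\to\CC\PP^{n-2}\times M_B$ is bimeromorphic (which would in fact yield the much stronger, unproved statement that $M_A$ is bimeromorphic to the product $\CC\PP^{n-2}\times M_B$) are unsupported, and with them both the identification of the generic fibre of $r$ and the constancy on it of an arbitrary $g\in\CC(M_A)$.

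The paper circumvents this entirely by never constructing a map towards $M_B$. It compares $\Phi$ with the abstract algebraic reduction $R:\hat M_A\to V$: since $a(M_A)=n-2$ (Proposition~\ref{algdim}), the inclusion of function fields induces a dominant rational map $\Phi_u:V\dashrightarrow\CC\PP^{n-2}$ between projective manifolds of equal dimension with $\Phi_u\circ\hat R=\Phi$, whose generic fibre is therefore finite; connectedness of the fibres of $\hat R$ and of $\Phi|_{M_A-C_A}$ (the latter being $M_B-C_B$ by Theorem~\ref{Thmcompact}) then forces $\Phi_u$ to be bimeromorphic. To salvage your route you would have to actually prove the meromorphic extension of the projection to $M_B$ (and also justify that the resulting compact fibre, a compactification of $M_B-C_B$, is bimeromorphic to $M_B$); otherwise you should argue as the paper does.
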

\begin{proof}
Let us start by noting that the map $\Phi$ becomes holomorphic when restricted to $M_A-C_A$. Proposition~\ref{compactification} and Lemma~\ref{stablerelation} give us an identification $M_A-C_A\cong (\CC^{n-2}\times W^s(F_B)^*)/U_A$, with $W^s(F_B)^*\subset \CC^2$, with respect to which the map  $\Phi$ becomes:  
\begin{equation}\label{algred1}
\hat\Phi:M_A-C_A\rightarrow \CC\PP^{n-2}, \widehat{(z,w)}\mapsto [w_{n-1}w_n:z_1:\ldots:z_{n-2}], \ \ (z,w)\in\CC^{n-2}\times W^s(F_B)^*
\end{equation}
where we have denoted by $\widehat{(z,w)}$ the class of $(z,w)$ modulo the action of $U_A$. Moreover, under the isomorphism $M_A-C_A\cong \CC^{n-2}\times (M_B-C_B)$ given by Theorem~\ref{Thmcompact}, this map is precisely the composition of the natural projection  $\CC^{n-2}\times (M_B-C_B)\rightarrow\CC^{n-2}$ with the map $\CC^{n-2}\rightarrow\CC\PP^{n-2}$, $z\mapsto [1:z]$. 

In particular, it follows that $\overbar{\Phi(M_A-C)}=\CC\PP^{n-2}$, i.e. $\Phi$ is a dominant map, and it induces an injection between the corresponding fields of meromorphic functions $\Phi^*:\CC(T_1,\ldots, T_{n-2})\rightarrow \CC(M_A)$. Moreover, for a point $t=[1:t_1:\cdots:t_{n-2}]\in\CC\PP^{n-2}$, the fiber $\hat\Phi^{-1}(t)\cong M_B-C_B$ is connected. 

Let now $R:\hat M_A\rightarrow V$ be the algebraic reduction of $M_A$, let $\hat\mu:\hat M_A\rightarrow M_A$ denote the corresponding modification and let $\hat R:M_A\dashrightarrow V$ be a dominant meromorphic map with $R=\hat R\circ\hat\mu$. It follows that $u:=(R^*)^{-1}\circ\Phi^*:\CC(\CC\PP^{n-2})\rightarrow \CC(V)$ is an inclusion of fields, hence, as both $\CC\PP^{n-2}$ and $V$ are projective manifolds, $u$ corresponds to a dominant rational map $\Phi_u:V\dashrightarrow\CC\PP^{n-2}$ with $\Phi^*_u=u$. As $\dim V=\dim \CC\PP^{n-2}$, a generic fiber of $\Phi_u$ consists in a finite number of points. 

On the other hand,  we must have $\Phi_u\circ \hat R=\Phi$. Indeed, this follows from the fact that $(\Phi_u\circ \hat R)^*T_j=\hat f_j$ and $T_j=\frac{t_j}{t_0}$ for any $1\leq j\leq n-2$, where $t_0,\ldots,t_{n-2}$ denote the homogeneous coordinates on $\CC\PP^{n-2}$. Moreover, the generic fibers of $\hat R$ and of $\Phi$ are connected. We infer thus that also the generic fiber of $\Phi_u$ is connected, hence $\Phi_u$ is a bimeromorphism.  

We conclude that indeed $\Phi:M_A\dashrightarrow\CC\PP^{n-2}$ is a model for the algebraic reduction. Namely, by the elimination of indeterminacies of $\Phi$  \cite{hirn2}, there exists a modification $\mu:M_A^*\rightarrow M_A$ and a holomorphic map $r:M_A^*\rightarrow \CC\PP^{n-2}$ so that $r=\Phi\circ\mu$. Thus $r$ is the desired algebraic reduction.
\end{proof}

\begin{cor}
Let $A\in\GL(n,\ZZ)$ be a Kato matrix of type $l=n-2$ with $\det A=1$. Then:
\begin{equation*}
 \dim H^0(M_A,K_{M_A}^*)={2n-3 \choose n-2}=\dim H^0(\CC\PP^{n-2},\mathcal O(n-1)).
 \end{equation*}
\end{cor}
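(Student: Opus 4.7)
By Proposition~\ref{Kodairadimension} applied with $l = n-2$, we have $K_{M_A}^* \cong \mathcal O((n-1) C_A)$, so $H^0(M_A, K_{M_A}^*)$ identifies with the space of meromorphic functions on $M_A$ whose polar divisor is dominated by $(n-1) C_A$. The plan is to show this space coincides with $\CC[\hat f_1, \ldots, \hat f_{n-2}]_{\leq n-1}$. For the lower bound, Remark~\ref{linEquiv} with $n-l-1 = 1$ gives $\mathrm{div}(\hat f_j) = M_{A_{(j)}} - C_A$, so each $\hat f_j$ has a simple pole on every irreducible component of $C_A$; hence any polynomial of total degree $\leq n-1$ in the $\hat f_j$'s defines a section of $K_{M_A}^*$. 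Algebraic independence of $\hat f_1, \ldots, \hat f_{n-2}$ from Proposition~\ref{algdim} makes these monomials linearly independent, and counting monomials of degree $\leq n-1$ in $n-2$ variables yields $\binom{2n-3}{n-2}$.

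For the matching upper bound, take $s \in H^0(M_A, K_{M_A}^*)$ and restrict it to $M_A - C_A$. By Theorem~\ref{Thmcompact}, this is biholomorphic to $\CC^{n-2} \times (M_B - C_B)$ with the $\CC^{n-2}$ factor parametrized by $(\hat f_1, \ldots, \hat f_{n-2})$. The crucial reduction is that $s$ depends only on this factor, which follows once we show $H^0(M_B - C_B, \mathcal O) = \CC$. For this, pass to a power $B^p$, positive by Remark~\ref{positive} with $p \geq 2$; since $\det B^p = \pm 1$, the Perron-Frobenius theorem gives a real eigenvalue $\alpha > 1$ and a second real eigenvalue $\pm 1/\alpha$ of absolute value $< 1$, so $\Spec(B^p)$ contains no roots of unity. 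Proposition~\ref{holofunct} then yields $H^0(M_{B^p} - C_{B^p}, \mathcal O) = \CC$, and pulling back through the unramified cover $M_{B^p} \to M_B$ of Lemma~\ref{cov} gives $H^0(M_B - C_B, \mathcal O) = \CC$. Slicing the product forces $s|_{M_A - C_A} = F(\hat f_1, \ldots, \hat f_{n-2})$ for a unique entire function $F : \CC^{n-2} \to \CC$.

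The final step is to use the pole bound to force $\deg F \leq n-1$ by a local analysis. By Lemma~\ref{Hj}, the strict transform $H'_{n-1}$ of $\{w_{n-1} = 0\} \subset \D$ is contained in $C_A$, and at a generic point of $H'_{n-1}$ the modification $\pi$ is a local biholomorphism, so $(z_1, \ldots, z_{n-2}, w_{n-1}, w_n)$ are local holomorphic coordinates in which the relevant component of $C_A$ is $\{w_{n-1} = 0\}$ and $\hat f_j = z_j/(w_{n-1} w_n)$. The section condition forces $w_{n-1}^{n-1} s$ to extend holomorphically across $\{w_{n-1} = 0\}$. Decomposing $F = \sum_{d \geq 0} F_d$ into homogeneous components and substituting, for fixed generic $(z_0, w_{n,0})$ with $w_{n,0} \neq 0$ one obtains the Laurent expansion $w_{n-1}^{n-1} s = \sum_{d \geq 0} F_d(z_0)\, w_{n,0}^{-d}\, w_{n-1}^{n-1-d}$. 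Uniqueness of Laurent series combined with holomorphy at $w_{n-1} = 0$ forces every coefficient $F_d(z_0)\, w_{n,0}^{-d}$ with $d > n-1$ to vanish; as this holds on an open set of values of $z_0$ and each $F_d$ is a polynomial, $F_d \equiv 0$ for $d > n-1$, so $F$ has degree $\leq n-1$. I expect the principal technical obstacle to be the reduction in the second step, which requires combining the compactification picture with the vanishing of holomorphic functions on $M_B - C_B$; once this is in hand and Lemma~\ref{Hj} supplies clean local coordinates, the final conclusion is a short computation.
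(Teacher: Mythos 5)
Your argument is correct, and it reaches the count by a genuinely different route from the paper on the upper-bound side. The paper identifies $H^0(M_A,K_{M_A}^*)$ with $\mathcal L((n-1)C_A)\subset\CC(M_A)$ exactly as you do, but then invokes Theorem~\ref{ThmAlgRed} in full: since $\CC(M_A)=\CC(\hat f_1,\ldots,\hat f_{n-2})$ is the rational function field of $\CC\PP^{n-2}$, every meromorphic function is $P/Q$ with $P,Q$ homogeneous of equal degree in $z_0=w_{n-1}w_n,z_1,\ldots,z_{n-2}$, and the pole condition along $(n-1)C_A$ is precisely $Q\mid z_0^{n-1}$, giving $\mathcal L(D)\cong H^0(\CC\PP^{n-2},\mathcal O(n-1))$ in one stroke. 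You instead bypass the algebraic reduction theorem: you only use the product structure $M_A-C_A\cong\CC^{n-2}\times(M_B-C_B)$ of Theorem~\ref{Thmcompact} together with the Liouville-type statement $H^0(M_B-C_B,\mathcal O)=\CC$ (correctly deduced from Proposition~\ref{holofunct} applied to a positive power $B^p$ with no roots of unity in its spectrum, pulled back through the covering of Lemma~\ref{cov}) to write any section as $F(\hat f_1,\ldots,\hat f_{n-2})$ with $F$ entire, and then a Laurent expansion at a generic point of $H_{n-1}'$ truncates $F$ to degree $\le n-1$. Your approach is longer but more self-contained, needing only the weaker ``no holomorphic functions on the fiber'' input rather than the identification of the full meromorphic function field; the paper's is shorter because Theorem~\ref{ThmAlgRed} has already been established. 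Your lower bound (via Remark~\ref{linEquiv} and algebraic independence from Proposition~\ref{algdim}) and the final count $\binom{2n-3}{n-2}$ of monomials of degree $\le n-1$ in $n-2$ variables are both fine; the only point you gloss over is that a generic point of $H_{n-1}'$ lying in $W$, away from $E$ and $H_n'$, exists so that the local coordinates $(z,w)$ and the formula $\hat f_j=z_j/(w_{n-1}w_n)$ are available there, but this is immediate from Lemma~\ref{Hj} and the construction.
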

\begin{proof}
By Proposition~\ref{Kodairadimension}, the line bundle $K_{M_A}^*$ is associated to the divisor $D=(n-1)C_A$. Hence 
\begin{equation*}
H^0(M,K_{M_A}^*)\cong \{f\in\CC(M_A), div(f)+D\geq 0\}=:\mathcal L(D).
\end{equation*}
On the other hand, by Theorem~\ref{ThmAlgRed} we have: 
%$\Phi^*H=C$, where $H=\{[t_0:\ldots:t_{n-2}]\in\CC\PP^{n-2}, t_0=0\}$ is the hyperplane divisor on $\CC\PP^{n-2}$. Thus one has: 
\begin{equation*}
\CC(M_A)=\CC(\hat f_1,\ldots, \hat f_{n-2})\cong\{\frac{P}{Q},\ P,Q\in\CC[z_0,\ldots, z_{n-2}] \text{ homogeneous, } \mathrm{deg} P=\mathrm{deg} Q\}
\end{equation*}
where we put $z_0=w_{n-1}w_n$ and $\deg z_j=1$, $0\leq j\leq n-2$. 

Since $C_A$ is induced by the divisor $w_{n-1}w_n=0$ on $\BB$, it follows that a meromorphic function represented as $f=\frac{P}{Q}$ is in $\mathcal L(D)$ if and only if $Q|z_0^{n-1}$. We thus find:
\begin{align*}
\mathcal{L}(D)\cong H^0(\CC\PP^{n-2},\mathcal O(n-1))
\end{align*} 
which is well-known to be of dimension ${2n-3\choose n-2}$.
\end{proof}

In the case $l=n-2$, we also have a full description of the space of holomorphic vector fields of the manifolds $M_A$:
 
\begin{thm}\label{ThmHvn-2}
Let $A\in\GL(n,\ZZ)$ be a Kato matrix of type $n=l-2$, with $n\geq 3$, and let $M_A$ be the associated Kato manifold. Then the space $H^0(M_A,TM_A)$ is of complex dimension $(n-2)(n-1)$, generated by the $F_A$-invariant vector fields on $\CC^n$:
\begin{align*}
X_{s,t}=&z_s\frac{\del}{\del z_t}, \ \ 1\leq s,t\leq n-2\\
Y_{j}=&w_{n-1}w_n\frac{\del}{\del z_j}, \ \ 1\leq j\leq n-2.
\end{align*}
\end{thm}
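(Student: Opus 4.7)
The plan is to prove the two inequalities separately: the $(n-2)(n-1)$ listed vector fields are linearly independent in $H^0(M_A,TM_A)$, and every element of this space is a linear combination of them.

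For the lower bound, I would verify that each $X_{s,t}$ and $Y_j$ is an $F_A$-invariant holomorphic vector field on $\CC^n$ vanishing at the origin, hence lifting through the iterated blow-up $\pi:\hat\BB^{(k)}\to\BB$ and descending to $M_A$. The linear vector fields $X_{s,t}$ pose no difficulty. For $Y_j$ the key computation is that its flow $(z,w)\mapsto(z_1,\ldots,z_j+\tau w_{n-1}w_n,\ldots,z_{n-2},w_{n-1},w_n)$ commutes with $F_A(z,w)=(z_1w^L,\ldots,z_{n-2}w^L,F_B(w))$, and this reduces to the identity $J_0 B = J_0 + L$ from Lemma~\ref{vectprJ} specialized to $l=n-2$ (where the multiplier $n-l-1$ equals $1$). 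Linear independence is immediate from the explicit form on $\CC^n$.

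For the upper bound, I would follow the reduction in Proposition~\ref{vf}: given $Z\in H^0(M_A,TM_A)$, extend $Z|_W$ to $\hat Z\in H^0(\hat\BB^{(k)},T\hat\BB^{(k)})$ via Hartogs and set $X=(\sigma^{-1})_*\hat Z|_{\sigma(\BB)}$, so that $(F_A)_*X=X$. By the tangent sequence \eqref{exE} applied at each step of the blow-up tower together with Lemma~\ref{Hj} (which here gives $\sigma^{-1}(E)=\{w_{n-1}w_n=0\}$), the vector field $X$ must be tangent to both $\{w_{n-1}=0\}$ and $\{w_n=0\}$, hence takes the form
\begin{equation*}
X=\sum_{j=1}^{n-2}h_j\frac{\del}{\del z_j}+w_{n-1}g_{n-1}\frac{\del}{\del w_{n-1}}+w_n g_n\frac{\del}{\del w_n}
\end{equation*}
for holomorphic $h_j,g_{n-1},g_n$ on $\BB$. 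Writing $(F_A)_*X=X$ componentwise produces two coupled systems: $(g_{n-1},g_n)^t\circ F_A = B\cdot(g_{n-1},g_n)^t$, and $h_j\circ F_A = w^L h_j + z_j(p_{n-1}g_{n-1}+p_n g_n)w^L$.

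The heart of the argument is then a power-series analysis of these equations. Expanding $g_i=\sum c^{(i)}_{I,J}z^Iw^J$ and matching coefficients yields the recursion $c_{I,(M-|I|L)B^{-1}} = B\, c_{I,M}$; the substitution $\tilde M = M + |I|J_0$ combined with $L = J_0 B - J_0$ turns the forward iterate into $\tilde M B^{-m} - |I|J_0$, i.e.\ pure $B^{-1}$-iteration on a translated lattice. I would use that $B\in\GL(2,\ZZ)$ is a type-$0$ Kato matrix with eigenvalues $\alpha>1$ and $\pm 1/\alpha$ (so $1\notin\Spec(B)$), and that by Perron--Frobenius applied to a sufficiently high positive power the subdominant eigenvector of $B^{-1}$ has mixed-sign components. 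Consequently $\tilde M B^{-m}$ eventually leaves the orthant $\{\ast\geq |I|J_0\}$ unless $\tilde M$ lies on the Perron ray, and the quadratic irrationality of $\alpha$ forces $\tilde M=0$; combined with $M\in\NN^2$ this yields $|I|=0$, $M=0$, and then $B c_{0,0}=c_{0,0}$ kills $c_{0,0}$. Hence $g_{n-1}=g_n=0$. A parallel analysis for $h_j$ (with the conjugate shift $U=J+(|I|-1)J_0$) identifies the surviving coefficients as $(I,J)=(0,J_0)$ and $(I,J)=(e_j,0)$ for $1\leq j\leq n-2$, giving $h_k=\alpha_k w_{n-1}w_n+\sum_j\beta_{k,j}z_j$ and thus $X=\sum_k\alpha_k Y_k+\sum_{s,t}\beta_{t,s}X_{s,t}$ as desired. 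The hard part will be this orbit analysis: one must combine Perron--Frobenius with the irrationality of $\alpha$ to rule out all but trivial orbits, and this is exactly where the hypothesis $l=n-2$ is decisive, since it pins down $B$ as a $2\times 2$ Kato matrix with the clean spectrum $\{\alpha,\pm 1/\alpha\}$.
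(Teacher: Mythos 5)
Your proposal is correct, and its skeleton coincides with the paper's: the same $(n-2)(n-1)$ fields verified invariant via $J_0B=J_0+L$, the same reduction through Proposition~\ref{vf} and Lemma~\ref{Hj} to an $F_A$-invariant field of the stated form on $\BB$, and the same two coupled functional equations (your version of the first equation, with the factor $z_j$ in front of $l_{n-1}h_{n-1}+l_nh_n$, is in fact the correct one; the paper's display omits it, harmlessly). Where you genuinely diverge is in the coefficient-vanishing step. The paper treats the two systems by two different devices: a lexicographic-minimality descent for the $(h_{n-1},h_n)$ system, and, for the $h_j$ system, an analytic argument that the terms of an absolutely convergent series must tend to zero, contradicted by evaluating at a point built from a Perron vector. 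You instead conjugate both recursions by the affine shifts $\tilde M=M+|I|J_0$ and $U=J+(|I|-1)J_0$ (using $L=J_0B-J_0$) so that each becomes a pure $B^{\pm1}$-orbit in $\ZZ^2$, and then exclude all nontrivial orbits by a single lattice argument: the orbit must remain in a translated orthant, the mixed-sign eigendirection forces escape, and the irrationality of the slope of the Perron direction (since $\alpha$ is a quadratic irrational for $B\in\GL(2,\ZZ)$) leaves only the fixed points $\tilde M=0$, $U=0$, which are exactly the coefficients of $Y_j$ and $X_{s,t}$. This is a cleaner and more uniform route, and it replaces the paper's analytic convergence step by pure combinatorics of exponents. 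Two small cautions: (i) your phrase ``subdominant eigenvector of $B^{-1}$'' should read \emph{dominant} --- the mixed-sign eigenvector of $B$ belongs to the eigenvalue $\pm1/\alpha$, hence to the eigenvalue $\pm\alpha$ of $B^{-1}$, and it is precisely its dominance under $B^{-1}$-iteration that drives the orbit out of the orthant; (ii) for the $h_j$ system the escape argument must be run on the \emph{backward} ($B^{-1}$) half of the orbit, since forward $B$-iterates of a vector with positive Perron component stay in the orthant forever --- the matching of exponents does force the backward orbit to consist of admissible indices, so the argument goes through, but this direction needs to be made explicit.
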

\begin{proof}
The proof is a continuation of the proof of Proposition~\ref{vf}. First of all, using that $J_0B-J_0=L$, it easy to check that the vector fields defined above are $F_A$ invariant and vanish at $0$, hence they define linearly independent holomorphic vector fields on $M_A$.

Conversely, we wish to show that $\dim H^0(M_A,TM_A)\leq (n-2)(n-1)$. By Lemma~\ref{cov}, for any $p\geq 1$, $M_{A^p}$ is a finite covering of $M_A$, hence $\dim H^0(M_{A^p},TM_{A^p})\geq \dim H^0(M_A,TM_A)$. Thus it suffices to prove the desired inequality for a power of $A$, so we can suppose that $A$ is $l$-positive and $B$ is positive.

Using the considerations made during the proof of Proposition~\ref{vf}, we know that any holomorphic vector field on $M_A$ defines an $F_A$-invariant vector field on $\BB\subset\CC^n$ of the form:
\begin{equation*}
X_{(z,w)}=\sum_{j=1}^{n-2}h_j(z,w)\frac{\del}{\del z_j}+h_{n-1}(z,w)w_{n-1}\frac{\del}{\del w_{n-1}}+h_{n}(z,w)w_{n}\frac{\del}{\del w_{n}}, \ \ (z,w)\in\DD\subset\CC^{n-2}\times\CC^2
\end{equation*}
where $h_1,\ldots, h_{n}$ are holomorphic functions on $\BB$. The $F_A$ invariance of $X$ translates into the following conditions on these functions:
\begin{align}
F_A^*h_j&=w^L(h_j+l_{n-1}h_{n-1}+l_nh_n), \ 1\leq j\leq n-2 \label{eqVf1}\\
F_A^*h_j&=b_{jn-1}h_{n-1}+b_{jn}h_n, \ n-1\leq j\leq n \label{eqVf2}
\end{align}
where $B=(b_{st})_{n-1\leq s,t\leq n}$ and $L=(l_{n-1},l_n)$.
For $1\leq j\leq n$, let us write the function $h_j$ as a power series 
\begin{equation*}
h_j(z,w)=\sum_{\substack{I\in\NN^{n-2}\\ J\in\NN^2}}d_{I,J}^{(j)}z^Iw^J
\end{equation*} 
which converges absolutely and uniformly on $\BB$.

Then eq.~\eqref{eqVf2} reads:
\begin{equation*}
\sum_{J\in\NN^2}d_{I,J}^{(j)}w^{|I|L+JB}=\sum_{K\in\NN^2}(b_{jn-1}d^{(n-1)}_{I,K}+b_{jn}d^{(n)}_{I,K})w^K, \ \ n-1\leq j\leq n, \ I\in\NN^{n-2}
\end{equation*}
where for $I=(I_1,\ldots,I_{n-2})\in\NN^{n-2}$ we put $|I|=I_1+\ldots+I_{n-2}$. For $I=0$ we find as before that for any $J\neq 0\in\NN^2$, $d_{0,J}^{(n-1)}=d_{0,J}^{(n)}=0$, while for $J=0$, we find that the vector $d=(d^{(n-1)}_{0,0},d^{(n)}_{0,0})^t$ satisfies $Bd=d$, thus $d=(0,0)^t$. For $I\neq 0$ we also find the vanishing of the coefficients. Indeed, if this is not the case then we can take $J\in\NN^2$ minimal (for the lexicographic order) so that there exists $j\in\{n-1,n\}$ with $d_{I,J}^{(j)}\neq 0$. But then we find: 
\begin{equation*}
d^{(j)}_{I,J}=b_{jn-1}d^{(n-1)}_{I,K}+b_{jn}d^{(n)}_{I,K}, \ K=(J-|I|L)B^{-1}.
\end{equation*}
However clearly $K<|I|L+KB=J$ and $d_{I,K}^{(s)}\neq 0$ for some $s\in\{n-1,n\}$ as $B$ is invertible, which contradicts the minimality of $J$. Thus $h_{n-1}=h_{n}=0$.

Now eq.~\eqref{eqVf1} becomes:
\begin{equation*}
\sum_{J\in\NN^2}d^{(j)}_{I,J}w^{|I|L+JB}=\sum_{K\in\NN^2}d^{(j)}_{I,K}w^{L+K}, \ 1\leq j\leq n-2, \ I\in\NN^{n-2}.
\end{equation*}
Suppose that for some $j\in\{1,\ldots, n-2\}$, $I\in\NN^{n-2}$ and $J\in\NN^2$ we have $d^{(j)}_{I,J}\neq 0$. Let us suppose by contradiction that $(I,J)\neq (0,J_0)$ and that if $J=0$, then $|I|\neq 1$. Then the sequence defined by
\begin{align*}
J_{-m}&=JB^{-m}+(1-|I|)L(B^{-m}+B^{-m+1}+\ldots+B^{-1}), \\
J_m&=JB^m+(|I|-1)L(B^{m-1}+\ldots+I_2), \ \ m\in\NN
\end{align*}
must satisfy $J_m\in\NN^2$ and $d_{I,J}=d_{I,J_m}$ for any $m\in\ZZ$. Note that all the terms of the sequence $(J_m)_m$ are distinct. Indeed, if there existed $m>p$ with $J_m=J_p$, then one immediately finds that $J(B-I_2)=(1-|I|)L$, which then implies that $J=(1-|I|)J_0$ as $B-I_2$ is invertible. It follows that either $J=0$ and $|I|=1$ or $(I,J)=(0,J_0)$. However we have excluded both these cases.

Hence both sequences $(J_m)_{m\geq 0}$ and $(J_m)_{m\leq 0}$ are infinite and it follows from the normal convergence of the series defined by $h_j$ that the series $\sum_{m\geq 0}w^{J_m}$ and $\sum_{m\leq 0}w^{J_m}$ converge absolutely for any $w\in\DD\subset \CC^2$. Now, like in the proof of Proposition~\ref{holofunct}, for $w=p(u)=(\e^{2\pi u_{n-1}},\e^{2\pi u_n})\in\DD$ with $u\in\CC^2$, we find:
\begin{equation*}
w^{J_{-m}}=\e^{2\pi(\langle J^*,B^{-m}u\rangle+(1-|I|)\langle L^*, (B^{-m}+\ldots+B^{-1})u\rangle)}, \ \ m\in\NN
\end{equation*}
and a similar formula for $w^{J_m}$. Hence the functions: 
\begin{align*}
v_m(u)&=\re\langle J^*,B^{-m}u\rangle+(1-|I|)\re\langle L^*, (B^{-m}+\ldots+B^{-1})u\rangle\\
u_m(u)&=\re\langle J^*,B^{m}u\rangle+(|I|-1)\re\langle L^*, (B^{m-1}+\ldots+I_2)u\rangle)
\end{align*}
must tend to $-\infty$ when $m$ goes to $\infty$ for any $u\in\CC^2$ with $p(u)\in\DD$. But for a Perron vector $f$ with positive components for which $p(f)\in\DD$ and $Bf=\al f$, $\al>1$, we find:
\begin{align*}
v_m(f)&=\al^{-m}\langle J^*,f\rangle+(1-|I|)(\al^{-m}+\ldots+\al^{-1})\langle L^*,f\rangle\\
u_m(f)&=\al^{m}\langle J^*,f\rangle+(|I|-1)(\al^{m-1}+\ldots+1)\langle L^*,f\rangle.
\end{align*}
Now, since $\al>1$ and $f$, $J$ and $L$ have non-negative components, we find that for $I=0$, $v_m(f)\geq 0$ for any $m>0$, while for $I\neq 0$, $u_m(f)\geq 0$ for any $m>0$. Thus we find a contradiction, implying that $d^{(j)}_{I,J}=0$. Finally, we conclude that the vector field $X$ must be a linear combination of the vector fields $X_{s,t}$ and $Y_j$, $1\leq j,s,t\leq n-2$.
\end{proof}

\begin{rmk}
Note that the vector fields in the above result define a singular holomorphic foliation $\mathcal{F}$ on $M_A$ of rank $n-2$ with the property that a leaf of $\mathcal F$ passing through a point of $C_A$ is contained in $C_A$. On the other hand, $\mathcal F$ is regular outside $C_A$, with leaves biholomorphic to $\CC^{n-2}$.  Moreover, by Corollary~\ref{torus}, we have an effective holomorphic action of $\mathbb{T}^{n-2}$ on $M_A$.  
\end{rmk}

\subsection*{Acknowledgments} We are very grateful to Georges Dloussky, Matteo Ruggiero and Victor Vuletescu for many stimulating discussions and useful suggestions.

\end{document}